\documentclass{article}
\usepackage{graphicx}
\usepackage{comment}  
\usepackage{float}
\usepackage{amsmath}
\usepackage[margin=1in]{geometry}
\usepackage{tikz}
\usepackage{tikz-cd}

\usepackage{authblk}

\usepackage{amssymb}
\usepackage{amstext}
\usepackage{indentfirst}
\usepackage[utf8]{inputenc}
\usepackage{enumitem}
\usepackage{amsthm}
\usepackage[colorlinks=true, linkcolor=black, citecolor=red]{hyperref}
\usepackage[nameinlink,noabbrev]{cleveref}
\usepackage{tikz}
\usepackage[normalem]{ulem}
\usetikzlibrary{arrows.meta, positioning}
\usetikzlibrary{patterns, shapes.geometric, calc, decorations.pathmorphing}
\usetikzlibrary{calc, fit, shapes.geometric} 

\usetikzlibrary{fit}

\newtheorem{remark}{Remark}
\newtheorem{theorem}{Theorem}[section]
\newtheorem{lemma}[theorem]{Lemma}

\newtheorem{proposition}{Proposition}[section]

\theoremstyle{definition}
\newtheorem{definition}[theorem]{Definition} 

  {\renewcommand\theinnerdefinition{#1}%
   \begin{innerdefinition}}
  {\end{innerdefinition}}

\usepackage{xspace}
\newcommand{\bigoh}[1]{\ensuremath{\mathcal{O}\!\left(#1\right)}\xspace}
\usepackage{algorithm}
\usepackage[noend]{algpseudocode}

\usepackage{amsthm}
\usepackage{cleveref}

\theoremstyle{definition}
\newtheorem{runningex}{Running Example}
\title{Baseball, An Extensive-Form Game-Theoretic Duel}
\date{January, 2026}

\begin{document}


\author[1]{Sebastian E. Ferrando, \thanks{Corresponding author: ferrando@torontomu.ca}}
\affil[1]{Department of Mathematics, Toronto Metropolitan University, Toronto, Canada}

\author[2]{Eli Kohn, \thanks{eli.d.kohn@gmail.com}}
\affil[2]{Department of Mathematics and Statistics, McGill University, Montreal, Canada}

\date{\today} 

\maketitle

\begin{abstract}
We formulate a baseball plate appearance as a finite two-person, zero-sum,
extensive-form game with chance moves and imperfect information. Successive
pitch cycles are connected through the evolving count, while the Batter chooses
an action without observing the Pitcher's current pitch selection. Terminal
outcomes are valued through a run-expectancy utility that incorporates both
runs scored during the plate appearance and the continuation value of the
half-inning. We implement the sequence-form representation of the game, whose
linear-programming formulation grows linearly with the game tree and permits
exact minimax equilibrium computations for trees far beyond the practical
range of the normal form. Nature's transition probabilities are estimated from
MLB Statcast data. We also establish two complementary dynamic-programming
interpretations. For general finite two-person zero-sum games with perfect
recall, we show that the dual variables of the sequence-form best-response
programs decompose into reach weights and conditional continuation values.
Under the additional state-Markov assumptions of the baseball model, we prove
that the full minimax equilibrium can be computed by backward induction over
the twelve non-terminal counts. The resulting computations produce equilibrium
strategies, continuation values, and conditional and reach-weighted measures of
the strategic cost of non-optimal actions.
\end{abstract}
\section*{Introduction}

A baseball plate appearance is naturally a finite-horizon dynamic game. The
strategic interaction unfolds through successive pitch cycles, the count
records the accumulation of earlier outcomes, and each pitch may either
terminate the plate appearance or change the continuation game. The players
also face asymmetric information when choosing their current actions: the
Pitcher selects a pitch, while the Batter chooses whether to Swing or Take
without observing that selection. These features make the plate appearance a
particularly concrete setting in which to study the formulation and exact
computation of equilibrium behavior in a multi-stage game with chance moves,
imperfect information, and endogenous stopping.

Most existing game-theoretic treatments of the Pitcher--Batter confrontation
do not retain this complete dynamic structure. A rich empirical literature has
studied whether professional athletes follow optimal mixed strategies:
Walker and Wooders~\cite{WalkerWooders2001} argue for minimax play in
professional tennis, Kovash and Levitt~\cite{kovash_levitt_2009} find
systematic departures from Nash behavior in baseball and American football,
and Palacios-Huerta~\cite{palacios_huerta_2003} provides supporting evidence
from penalty kicks. These studies, together with most game-theoretic models
of the Pitcher--Batter matchup, treat a play or pitch as an isolated one-shot
matrix game, sometimes specifying a separate game for each count. Sabermetric
work~\cite{tangotiger-re24}, by contrast, develops detailed predictive models
from play-by-play data, most notably through the run-expectancy framework
popularized by Tango, Lichtman, and Dolphin, but does not generally place
those models within a dynamic equilibrium computation.

We model the complete plate appearance as a single extensive-form game in
which the count develops endogenously and determines the available
continuation and stopping possibilities. Within each pitch cycle, the Pitcher
acts first, reflecting the physical order of play, but the Batter's decision
nodes following the possible Pitcher actions belong to a common information
set. The current interaction therefore imposes the same non-observability
restriction as a simultaneous-move game, while the extensive form preserves
the history dependence of the complete plate appearance. To compute its
minimax equilibrium without constructing the exponentially large normal form,
we use the sequence-form representation introduced by von
Stengel~\cite{vonstengel_1996}; see also Koller, Megiddo, and von
Stengel~\cite{koller_megiddo_vonstengel_1996}. Its realization-plan
formulation has size linear in the game tree and permits exact equilibrium
computation for plate appearances involving multiple pitches and richer
action sets. Our objective is not to predict an individual pitch outcome, but
to build and solve a structurally consistent dynamic model in which strategies
may depend on the observed history while respecting the Batter's lack of
information about the current pitch choice.

Our principal computational contribution is not a new sequence-form
algorithm, but the formulation and exact solution of the complete
Pitcher--Batter interaction as an imperfect-information extensive game. As
far as we know, this is the first use of von Stengel's sequence form to
compute an exact equilibrium on the full game tree of a plate appearance.
Following the framework of Koller, Megiddo, and von
Stengel~\cite{koller1994}, sequence-form methods have overcome the
normal-form bottleneck in games with millions of nodes, including
linear-programming approximations for heads-up Texas
Hold'em~\cite{billings2003}, exact equilibrium computation in Rhode Island
Hold'em~\cite{gilpin2005ri}, and iterative double-oracle expansion of large
zero-sum trees~\cite{bosansky2013}. We bring this computational framework to
a sports interaction in which successive stages are linked by an evolving
state, current actions are chosen under asymmetric information, and terminal
outcomes determine both immediate rewards and continuation values. The
sequence form thereby makes it possible to compute equilibrium behavior for
the plate appearance as one connected, multi-stage strategic interaction
rather than as a collection of isolated matrix games.

A particularly close point of comparison is Mercier~\cite{Mercier2024}, who
models the Pitcher--Batter interaction as a family of simultaneous normal-form
games indexed by the baseball state $(R,O,C)$, with $C$ denoting the count.
His payoffs incorporate both immediate and expected future runs, so the two
approaches share a zero-sum formulation, mixed strategic behavior, and a
run-based valuation of outcomes. The principal difference is dynamic. In
Mercier's model, the count selects the pitch-level game to be played; in our
model, the count is generated endogenously within a single extensive plate
appearance and determines the continuation and termination possibilities
available from the current history. Behavioral strategies therefore connect
decisions across the possible evolution of the plate appearance, subject to
the Batter's information, while terminal payoffs depend on the fixed initial
base--out record through the run-expectancy utility. We abstract from foul
balls with two strikes, which would permit arbitrarily long plate appearances
and substantially enlarge the game tree; see Footnote \ref{fn:foul-balls}.

A second baseball paper bears directly on the modeling claim above. Douglas,
Witt, Bendy, and Vorobeychik~\cite{douglas2023atbat} propose a zero-sum
\emph{stochastic game} model of a full baseball at-bat, with the count as the
state variable and on-base percentage as the game's value; they observe that
such a game can in principle be solved by classical stochastic-game methods,
but do not supply a proof that this reduction is exact, do not use the
sequence form, and do not represent within-pitch imperfect information
through an explicit information-set structure. Our count-state recursion in
Section~\ref{subsec:count-state-equilibrium-dp} can be read as making
rigorous, for the run-expectancy utility and under the stated state-Markov
assumptions, the reduction they propose informally; Theorems
\ref{optimalPitcherAtu} and \ref{thm:batter-dp=dual}, by contrast, concern
the general perfect-recall case and have no analogue in their paper.

The paper makes three connected contributions, centered on the exact
computation and interpretation of equilibrium behavior in a dynamic game
with imperfect information. First, we formulate the complete plate appearance
as a single extensive-form game in which the count develops endogenously and
terminal outcomes are valued according to the initial base--out record. We
then implement the sequence-form representation to compute exact minimax
equilibria on the full imperfect-information tree. This replaces optimization
over the exponentially large space of complete pure-strategy plans by linear
programs whose size is linear in the game tree, thereby allowing substantially
richer and longer plate appearances to be treated computationally.

Second, for general finite two-person zero-sum games with perfect recall, we
give a dynamic interpretation of quantities arising from the sequence-form
best-response programs. Using linear-programming duality, strong duality, and
complementary slackness, we show that their dual variables decompose as the
product of the weight of reaching an information set and the conditional value
of continuing from that information set. The resulting quantity coincides
with the \emph{counterfactual value} used in counterfactual regret
minimization~\cite{zinkevich2008cfr}; our contribution is not the quantity
itself, but its direct derivation from von Stengel's sequence-form
best-response program. This also clarifies the distinction between dynamic
programming for a best response against a fixed opponent and computation of
the minimax equilibrium of the full game.

Third, under the additional state-Markov assumptions of the baseball model,
we prove that the full minimax equilibrium admits an exact state reduction and
can be computed by backward induction over the twelve non-terminal counts.
With the full Pitcher action set, the recursion requires solving twelve local
$8\times 2$ zero-sum matrix games. Building on the resulting equilibrium
strategies and continuation values, we introduce conditional and
reach-weighted measures of the cost of a non-optimal action at each count and
record. These measures identify, after accounting for the probability of
reaching each count, the situations in which strategic choices matter most.
Thus, the sequence-form and count-state computations play complementary
roles: the former applies to the full history-dependent game, while the latter
provides a sharper reduction when the count is a sufficient state.

The remainder of the paper is organized as follows. Sections~\ref{sec:game-theory-model}--\ref{sec:plate-appearance-model} formulate the
plate appearance as an extensive-form game with chance moves and imperfect
information and embed it in its baseball context: the game tree, information
partitions, and the Batter's perfect recall in
Section~\ref{sec:game-theory-model}; the base--out records, run-expectancy
utility, and Nature's Statcast-estimated transition kernel in
Section~\ref{sec:plate-appearance-model}. Section~\ref{sec:strategic-form}
shows why the normal form is computationally intractable, and
Section~\ref{sec:sequence-form} introduces the sequence-form realization
plans that avoid this bottleneck. Section~\ref{dynamicProgramming} develops
the two dynamic-programming results described above: the reach-weighted
continuation-value interpretation of the best-response duals
(Theorems~\ref{optimalPitcherAtu} and~\ref{thm:batter-dp=dual}), and the
count-state minimax recursion and its strategic-leverage measures
(Proposition~\ref{prop:count-state-equilibrium-recursion}).
Section~\ref{sec:results} reports numerical equilibria and compares
model-implied swing probabilities to empirical Statcast frequencies. The
appendices collect standard normal-form LP results, the perfect-recall
proof, and the sequence-form programs used in the computations.

\section{Game Theory Model}
\label{sec:game-theory-model}

The model is designed to capture the following features of the interaction:

\begin{enumerate}

\item The plate appearance is treated as a sequential process in which decisions depend on the observed history of play rather than as a collection of independent pitch-level choices. The evolving count and prior actions are therefore incorporated explicitly into the game's records. 

\item The interaction involves imperfect information: the batter does not observe the pitch type prior to the pitch. Consequently, batting decisions are made without knowledge of the pitcher’s realized pitch choice.

\item The extensive-form representation imposes consistency conditions on strategies across histories and information sets. In particular, admissible pitch-selection behavior must arise from behavioral strategies defined on the underlying sequential game.

\item The payoff structure incorporates initial records and continuation values (reflecting how the present plate appearance seats in the dynamical state of the game)  and a stopping rule. These features act as boundary conditions to the full plate appearance. Therefore, strategic actions are evaluated through their effect on the subsequent evolution of the plate appearance and dependent on its embedding into the overall game.

\end{enumerate}

The formal model introduced below is constructed to make these features explicit while remaining computationally tractable through a sequence-form representation.

\subsection{Primitives}

We model a plate appearance in the traditional game theoretic framework as an extensive-form game with chance moves and imperfect information. That is, we consider a rooted infinite directed tree, $\mathcal{T} = (V, E)$, with root $x^0$.
Each vertex $v \in V$ is owned by one of three players 
\[
\mathcal{P} := \{\text{Pitcher (P), Batter (B), Nature (N)}\}
\]
We refer to the Pitcher (P) and the Batter(B) as being {\it strategic players}. Let $V_i \subseteq V$ denote the set of vertices owned by player $i \in \mathcal{P}$.
Then $\{V_P, V_B, V_N\}$ form a partition of the sets of vertices  in $V$, i.e.:
$V  \;=\; V_P \cup V_B \cup V_N \text{ and } V_i \cap V_j \;=\; \varnothing \ \text{for all } i \neq j$.

Let $\mathcal{X}$ be the set of paths in the tree,  i.e. a sequence $\{X_k\}_{k \geq 0}$ of connected nodes with $X_0= x^0$; the framework assumes that a map $\tau: \mathcal{X} \rightarrow \{1, 2, \ldots\}$ is available which is a \emph{stopping time} in the following sense: given  $X, X' \in \mathcal{X}$ such that $X_k=X'_k$,~for ~$0 \leq k \leq \tau(X)$,  it then follows that  $\tau(X')= \tau(X)$. Relying on $\tau$ we will work with the {\it stopped tree} defined as containing finite paths $X=(X_0, X_1, \ldots, X_{\tau(X)})$ and will refer to $X_{\tau(X)}$ as a \emph{terminal node or leaf}  which we will also write as $X_{\tau}$. An explicit stopping time $\tau$  will be introduced in Definition \ref{stoppingTime} below
once some preliminary modeling choices have been made. It should be clear from context when we refer to the background infinite tree or the stopped tree. 

At each node $x$ owned by $i \in \mathcal{P}$, there is a set of available actions:

\begin{itemize}
    \item $A_P = \Theta \times \Lambda$, where $\Theta = \{\text{Fastball}, \text{Offspeed}\}$ and $\Lambda = \{\text{Bottom}, \text{Middle}, \text{Top}, \text{Chase}\}$ is the pitcher's action set.
    \item $A_B = \{\text{Swing}, \text{Take}\}$ is the batter's action set.
    \item $A_N = \{\text{Single}, \text{Double}, \text{Triple}, \text{HomeRun}, \text{Out}, \text{Strike}, \text{Ball}\}$ is Nature's action set.
\end{itemize}

Each directed edge $e = (x, x') \in E$, represents an action taken at node $x$. We write $x \xrightarrow{a} x'$
to denote the edge joining $x$ and  $x'$. The edge is labeled by action $a \in A_i$ for the case that $x \in V_i$; traversing this edge corresponds to player $i$ choosing action $a$, after which the game transitions from $x$ to the successor node $x'$.

Owners of vertices alternate deterministically in a fixed order from Pitcher $\rightarrow$ Batter $\rightarrow$ Nature. The root node $x^0$ is owned by the Pitcher. We refer to Figure \ref{fig:baseball-game-tree} for a visualization of the sequential actions and the game tree.

We also identify nodes with the results of actions, i.e. $V_B \subseteq A_P$, $V_N \subseteq A_B$ and $V_P \subseteq A_N$; it will also follow from our constructions that the leaves for our stopped tree will be actions from nature. On the other hand, the root node $x^0$ will be the empty set (which could be considered to be the first action by nature).

\subsection{The history set $\mathcal{H}$} 
\label{subsec:history_set}
We define the history $h$ associated to a node $x$ as the sequence of action labels encountered along the unique path segment from the root $x^0$ to $x$. Each node $x \in V$ has a unique history since the path segment from the root to $x$ is unique. Formally, a history is a finite sequence of action labels $h=(a_1, \dots, a_k)$ such that there exist vertices $x_1,\dots,x_k\in V$, $x_k=x$, with
\[
x^0 \xrightarrow{a_1} x_1 \xrightarrow{a_2} \cdots \xrightarrow{a_k} x_k.
\]
We denote by $\mathcal{H}$ the set of all such histories, notice that for each such $x$-given history $h=(a_1,\dots,a_k)$, there is a set of paths $\mathcal{X}_x \subseteq \mathcal{X}$ defined by $\mathcal{X}_x =\{X \in \mathcal{X}: X_k=x\}$. 

\begin{definition}[Set $Z$ of terminal histories] \label{terminalHistories}
Let $Z \subset H$ be the set of \emph{terminal histories}, i.e. $h \in Z$ if and only if $h=(a_1, \ldots, a_{\tau(X)})$ where $X_{\tau}=x$ and $x$ is the node associated to $h$.  A terminal history  will represent the entire sequence of actions for a completed plate appearance. The terminal vertex $x\in V$ corresponding to a {\it terminal history} $z \in Z$ has no outgoing edges. 
\end{definition}

We note that, in order to give meaning to the stopping time associated with a given path in the baseball setting, we must retain certain information about the history corresponding to a node in the tree. In particular, we need to keep track of the accumulation of previous actions by nature  that resulted in either a ball or a strike. This is important because, once four balls or three strikes have been accrued during a plate appearance, the plate appearance terminates and results either in a \textit{walk} (with the runner advancing to first) or in an \textit{out}, respectively. We refer to Section \ref{sec:plate-appearance-model} for the introduction of some minimal vocabulary to describe the baseball game.

\begin{definition}[Count]  \label{count}
    Let ${\tt count}: \mathcal{H} \to \mathbb{N}_{\leq 4} \times \mathbb{N}_{\leq 3}$ be a function mapping a history to its pitch count, expressed as the tuple (Balls, Strikes).  For any history $h = (a_1, \dots, a_k)$, we define:
    \[
    {\tt count}(h) = \Biggl(\sum_{i=1}^{k}\mathbf{1}_{a_i = \text{Ball}}, \sum_{i=1}^{k}\mathbf{1}_{a_i = \text{Strike}}\Biggr)
    \]
\end{definition}

One \emph{pitch cycle} consists of the ordered triple of moves
\[
  \bigl(a_{P}, a_{B}, a_{N}\bigr) \in A_{P}\times A_{B}\times A_{N}.
\]
Because a count of four balls or three strikes can be reached only once, and because an in‑play event ends the appearance immediately, every history has a length of at most $6$ pitch cycles.\footnote{This modeling choice is a deviation from the true nature of a plate appearance in baseball. In fact, it is possible to have a foul ball at two strikes, which extends the plate appearance without updating the count. However, as we will see, allowing for this possibility increases the size of the game tree significantly and thus the computational complexity of solving for the minimax solution. Thus, we do not allow for this possibility. \label{fn:foul-balls}}

Next we introduce an explicit stopping time $\tau$ required by our setting, it encodes the end of a plate appearance in our model.
\begin{definition}[Stopping Time $\tau$, Stopped Histories and Terminal Nodes]  \label{stoppingTime}
Consider a path $X \in \mathcal{X}$ we define $\tau: \mathcal{X} \rightarrow \mathbb{N}$ by taking $\tau(X)$ to be the smallest index $k$ such that
$x= X_{k-1}$ is a node in $V$ owned by Nature and there exists a Nature's action  $a_N= a_k$, $X_{k-1} \xrightarrow{a_k} X_{k}$, satisfying:
\[
  \begin{cases}
    a_{N} = \text{Ball}    &\text{and } {\tt count}(h) = (3,s) \text{ with } s \leq 2 \quad\text{(Walk),}\\[4pt]
    a_{N} = \text{Strike}  &\text{and } {\tt count}(h) = (b,2) \text{ with } b \leq 3 \quad\text{(Strikeout),}\\[4pt]
    a_{N} \in \{\text{Single}, \text{Double}, \text{Triple}, \text{HomeRun}, \text{Out}\} &\quad\text{(In-play event),}
  \end{cases}
\]
where $h \in \mathcal{H}$ denotes  the action history associated to $x$. The history $h^\prime = (h, a_N)$, achieved by extending the history $h$ with  Nature's action $a_N$, is then a terminal history (as per Definition \ref{terminalHistories} above). $X_{\tau}= X_k$ is called a \emph{terminal node}.
\end{definition}
From the above definition, it follows that $\tau$ is indeed a stopping time.
Note that $\tau(X) \in \{4, 7, 10, \ldots\}$ as actions along paths come in groups of pitch cycles (each such of length $3$). Therefore the number of pitch cycles can be defined by 
\begin{equation} \label{pitchCycleNumber}
\rho(X) \equiv \frac{\tau(X)-1}{3}.
\end{equation}

\subsection{Information Sets}\label{subsec:information-set}

For each strategic player $i \in \{P, B, N\}$, the set $V_i$ is partitioned into sets $U_i^j, \; j \in \{1, \dots, k_i\}$, where each $U_i^j$ is an \emph{information set}. $\mathcal{U}_i$ is the collection of all information sets $U_i^j, \; j \in \{1, \dots, k_i\}$ and is called the \emph{information partition} of player $i$. We denote $A(U_i^j) := A(x), \;\; x \in U_i^j$, as the action set of the information set. In the specific context of our extensive form game tree, we have the action set at a node $x$ is uniquely determined by the owner of $x$, and all nodes owned by the same strategic player have the same available action set, that is $A(x) = A_i$, for $x \in V_i$. Each information set $U_i^j$ has the property that $A(x) = A(x')$ whenever $x$ and $x'$ are in the same information set $U_i^j$. 

We interpret the nodes in an information set to be indistinguishable. That is, if $x$ and $x'$ are both in $U_i^j$, player $i$ knows that the play of the game reached information set $U_i^j$, but he does not know which specific node in the information set has been reached (\cite{osborne_rubinstein_1994}).

Concretely, we define each information set in the information partition of the Pitcher as a singleton. That is, every information set $U_P^j$ contains exactly one node $x \in V_P$. This corresponds to the Pitcher knowing/observing nature's outputs; in particular, $P$  has \emph{perfect recall}. In the context of the game tree, this means that the Pitcher knows exactly which vertex they are at when they are called on to make a decision. On the other hand, we model the Batter's  uncertainty when facing the present pitch by defining his information partition so that he cannot discern the most recent action taken by the Pitcher. Mathematically, for every Pitcher's node $x \in V_P$, the set of all its immediate child nodes forms a single information set for the batter. Formally, $\mathcal{U}_B = \{U_B^x\}_{x \in V_P}$, where:
\[
U_B^x = \{x' \in V_B \mid (x, x') \in E\}~\mbox{for each}~x \in V_P.
\]

We interpret the above definition of the information partition $\mathcal{U}_B$ as the Batter not being able to discern the most recent action the Pitcher took. Figure \ref{fig:baseball-game-tree} displays $U^1_B$, the first appearing Batter information set.  Importantly, the Batter is able to remember previous actions by the Pitcher and Nature outcomes, as otherwise the Batter would not have the property of perfect recall (see Proposition \ref{prop:batter-perfect-recall} below).

Finally, each information set for Nature is also a singleton corresponding to the possible outputs of the Batter.  Formally, $\mathcal{U}_N = \{U_N^x\}_{x \in V_B}$, where:
\[
U_N^x = \{x' \in V_N \mid (x, x') \in E\}~\mbox{for each}~x \in V_B.
\]

\begin{definition}[Perfect Recall \cite{gametheory_maschler} ] \label{def:perf-recall} 
    A player $i$ is said to have perfect recall if the following two conditions hold: 
    \begin{enumerate}[topsep=0pt, itemsep=6pt, parsep=0pt]
        \item No information set of player $i$ intersects the path from the root to any terminal node more than once.
        \item If two nodes $x, x'$ are in the same information set $U^j_{i}$, then the sequence of player $i$'s past information sets and the actions player $i$ took at those information sets must be identical along the unique paths leading to $x$ and $x'$.
    \end{enumerate}
\end{definition}
Intuitively a player has perfect recall if they remember all actions they made throughout the play of a game.
\begin{proposition}\label{prop:batter-perfect-recall}
The batter has perfect recall.
\end{proposition}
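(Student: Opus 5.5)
We verify the two conditions of Definition~\ref{def:perf-recall} directly from the structure of the stopped tree and the definition $\mathcal{U}_B=\{U_B^x\}_{x\in V_P}$. The key observation is that each Batter information set is indexed by a unique Pitcher node $x\in V_P$. Its elements are exactly the children of $x$. Consequently, every node $x'\in U_B^x$ has a history of the form $(h_x,a_P)$, where $h_x$ is the history of $x$ and $a_P\in A_P$. Thus all nodes of an information set share the common prefix $h_x$ and differ only in the final Pitcher action.

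\emph{Condition 1.} Let $X$ be a path from the root to a terminal node, and suppose it meets $U_B^x$. Then $X$ passes through some child of $x$, and hence through $x$ at depth $|h_x|$. The child lies at depth $|h_x|+1$. Along the single path $X$ there is exactly one node at each depth, so $X$ contains exactly one node of $U_B^x$, namely $X_{|h_x|+1}$. Therefore no Batter information set is intersected more than once.

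\emph{Condition 2.} Take $x',x''\in U_B^x$ with histories $(h_x,a_P')$ and $(h_x,a_P'')$. The paths from the root to $x'$ and to $x''$ coincide up to $x$; beyond that, each adds one Pitcher edge. The Batter nodes strictly before $x'$ (respectively $x''$) all lie on the common segment from $x^0$ to $x$, because the last edge leads out of $V_P$ and is not a Batter move. Hence the sequence of Batter information sets visited, and the Batter actions taken at them, are literally the same subsequence of $h_x$ in both cases.

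We must also check that ``past information sets'' is well defined here. Each earlier Batter node $y$ on the common segment belongs to $U_B^{p(y)}$, where $p(y)$ is its Pitcher parent. That parent also lies on the common segment, so the information set recorded is the same for $x'$ and $x''$. The only mild obstacle is this bookkeeping: confirming that the partition $\mathcal{U}_B$ is indeed a partition of $V_B$, which holds because every node of $V_B$ has a unique parent and that parent is in $V_P$. Once that is checked, both conditions follow immediately.
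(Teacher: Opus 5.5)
Your proof is correct and follows essentially the same route as the paper's: every node of $U_B^x$ is a child of the single Pitcher node $x$, so a root-to-leaf path leaves $x$ only once (Condition 1), and the paths to any two such nodes share the prefix up to $x$ and differ only in the final, unobserved Pitcher edge (Condition 2). Your extra bookkeeping makes explicit two points the paper leaves implicit: that $\mathcal{U}_B$ really partitions $V_B$, and that the labels of earlier Batter information sets are fixed by Pitcher parents lying on the common segment.
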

\begin{proof}[Proof that the Batter has perfect recall]
We verify the two conditions in the definition of perfect recall.

First, fix a Batter information set $U_B^x$, where
\[
U_B^x=\{x'\in V_B:(x,x')\in E\}
\]
for some Pitcher node $x\in V_P$. Thus, every node in $U_B^x$ is an
immediate child of $x$. A path from the root to a terminal node can leave
$x$ through only one outgoing edge and therefore can contain at most one
node of $U_B^x$. Hence no Batter information set is encountered more than
once along a terminal path.

Second, let $v,v'\in U_B^x$. Since $v$ and $v'$ are both children of the
same Pitcher node $x$, the unique paths from the root to $v$ and to $v'$
coincide up to and including $x$. They may differ only in the final edge
from $x$, which represents the Pitcher's current, unobserved action.
Consequently, before reaching the current information set $U_B^x$, the Batter has encountered exactly the same earlier information sets and has taken
exactly the same actions at those information sets along both paths.
This verifies the second condition of perfect
recall. Therefore, the Batter has perfect recall.
\end{proof}

Because every information set in the Pitcher's partition $\mathcal{U}_P$ is a singleton, it trivially follows that the pitcher also has perfect recall. \footnote{
This is a standard observation; see \cite{LeytonBrownShoham2008} immediately after Def.~5.2.2 (“Clearly, every perfect-information game is a game of perfect recall”). 
}

\begin{figure}[h!]
\centering
\begin{tikzpicture}[
    scale=0.8, transform shape,
    pitcher/.style={circle, draw=blue!80, fill=blue!10, thick, inner sep=0pt, minimum size=6mm},
    batter/.style={circle, draw=red!80, fill=red!10, thick, inner sep=0pt, minimum size=6mm},
    nature/.style={circle, draw=green!60!black, fill=green!10, thick, inner sep=0pt, minimum size=6mm},
    terminal/.style={rectangle, draw=gray, fill=gray!10, rounded corners, inner sep=3pt, font=\footnotesize},
    edgelab/.style={font=\footnotesize, inner sep=2pt}
]

    \node[pitcher] (P0) at (0,0) {P};
    
    \node[batter] (B1) at (-7,-3.8) {B};
    \node[batter] (B2) at (-5,-3.8) {B};
    \node[batter] (B3) at (-3,-3.8) {B};
    \node[batter] (B4) at (-1,-3.8) {B};
    \node[batter] (B5) at (1,-3.8) {B};
    \node[batter] (B6) at (3,-3.8) {B};
    \node[batter] (B7) at (5,-3.8) {B};
    \node[batter] (B8) at (7,-3.8) {B};
    
    \path[draw, ->, thick] (P0) -- (B1) node[midway, sloped, above, font=\scriptsize] {Fastball, Top};
    \path[draw, ->, thick] (P0) -- (B2) node[midway, sloped, above, font=\scriptsize] {Fastball, Bottom};
    \path[draw, ->, thick] (P0) -- (B3) node[midway, sloped, above, font=\scriptsize] {Changeup, Zone};
    \path[draw, ->, thick] (P0) -- (B4) node[midway, sloped, above, font=\scriptsize] {Changeup, Chase};
    \path[draw, ->, thick] (P0) -- (B5) node[midway, sloped, above, font=\scriptsize] {Slider, Zone};
    \path[draw, ->, thick] (P0) -- (B6) node[midway, sloped, above, font=\scriptsize] {Slider, Chase};
    \path[draw, ->, thick] (P0) -- (B7) node[midway, sloped, above, font=\scriptsize] {Curve, Zone};
    \path[draw, ->, thick] (P0) -- (B8) node[midway, sloped, above, font=\scriptsize] {Curve, Chase};
    
    \draw[dashed, line width=1pt, draw=red!70] (-7.5,-4.2) rectangle (7.5,-3.4);
    \node[color=red!80!black, font=\small\bfseries] at (0,-3.1)
        {Batter's Information Set ($U_B$)};

    \node[nature] (N4S) at (-2,-6) {N};
    \node[nature] (N4T) at (0,-6) {N}; 

    \path[draw, ->, thick] (B4) -- (N4S) node[midway, left, edgelab] {Swing};
    \path[draw, ->, thick] (B4) -- (N4T) node[midway, right, edgelab] {Take};

    \node at (-7,-4.8)  {$\vdots$};
    \node at (-5,-4.8)  {$\vdots$};
    \node at (-3,-4.8)  {$\vdots$};
    \node at (1,-4.8)   {$\vdots$};
    \node at (3,-4.8)   {$\vdots$};
    \node at (5,-4.8)   {$\vdots$};
    \node at (7,-4.8)   {$\vdots$};

    \node[pitcher] (PS)  at (-3,-9.5) {P};
    \node[pitcher] (PD)  at (-2,-9.5) {P};
    \node[pitcher] (PTR) at (-1,-9.5) {P};
    \node[pitcher] (PHR) at (0,-9.5)  {P};
    \node[pitcher] (PO)  at (1,-9.5)  {P};
    \node[pitcher] (PST) at (2,-9.5)  {P};
    \node[pitcher] (PB)  at (3,-9.5)  {P};

    \path[draw, ->] (N4T) -- (PS)  node[midway, sloped, above, font=\scriptsize] {Single};
    \path[draw, ->] (N4T) -- (PD)  node[midway, sloped, above, font=\scriptsize] {Double};
    \path[draw, ->] (N4T) -- (PTR) node[midway, sloped, above, font=\scriptsize] {Triple};
    \path[draw, ->] (N4T) -- (PHR) node[midway, sloped, above, font=\scriptsize] {HR};
    \path[draw, ->] (N4T) -- (PO)  node[midway, sloped, above, font=\scriptsize] {Out};
    \path[draw, ->] (N4T) -- (PST) node[midway, sloped, above, font=\scriptsize] {Strike};
    \path[draw, ->] (N4T) -- (PB)  node[midway, sloped, above, font=\scriptsize] {Ball};

    \node at (-2,-7) {$\vdots$};

    \node[anchor=west, font=\footnotesize, align=left] at (4.2, 1) {
        \textbf{Action Spaces:}\\
        $A_P$ (8 actions): $\Theta \times \Lambda$ \\
        $A_B$ (2 actions): $\{\text{Swing}, \text{Take}\}$\\
        $A_N$ (7 actions): $\{\text{Single}, \text{Double}, \dots\}$
    };

\end{tikzpicture}
\caption{The figure displays the possible actions of our three players and the first information set $U^1_B$ for the Batter.}
\label{fig:baseball-game-tree}
\end{figure}

\subsection{Nature's moves and filtered space $(\Omega , \mathcal{F})= \{\mathcal{F}_k\}$.}\label{subsec:nature-moves}

Nature is not a strategic player it selects  actions  by a fixed behavioral strategy $\beta_N$. The later are probability distributions over Nature's action set $A_N$; details on how we model $\beta_N$ are in Section \ref{movesByNature}. For future reference, as we will need to enlist Nature as a formal player in von Stengel's approach, we mention that the information sets $U^j_N$ are singletons and correspond to the children that result from Batter's  actions.

In this section we indicate the factorization of the joint (discrete) probability for each atom $z \in Z$ conditional on a joint strategy for Batter and Pitcher.

It is convenient to introduce the usual base space probabilistic notation   $\Omega := Z$ for the set of terminal histories. Define the set $\mathcal{H}_k := \{h \in \mathcal{H} : |h| \leq k\text{ pitch cycles}\}$ and truncation maps  $\pi_k : \Omega \to \mathcal{H}_k$, for $k \geq 0$, as follows:
\begin{itemize}
    \item The history of $z$ up to and including the $k$'th batter move,
    \item The full history $z$ if the plate appearance ends before $k$ pitch cycles. 
\end{itemize}

The truncation stops after the batter's action — rather than after the full cycle $(a_P^k, a_B^k, a_N^k)$ — so that $\mathcal{F}_k$ captures the information available strictly prior to Nature's move in cycle $k$, ensuring $\mathbb{P}(A_N^k = a \mid \mathcal{F}_k)$ conditions on a $\sigma$-algebra that does not yet contain the outcome being measured.

The maps $\pi_k$ induce an equivalence relation on $\Omega$ where $z \sim_k z'$ if and only if $\pi_k(z) = \pi_k(z')$. For any $z \in \Omega$, its equivalence class is $[z]_k = \{z' \in \Omega : \pi_k(z') = \pi_k(z)\}$. Define the sigma algebra at \textit{time} $k$ as $\mathcal{F}_k := \sigma(\pi_k)$, which is the $\sigma$-algebra generated by the partition of $\Omega$ into these equivalence classes $\{ [z]_k : z \in \Omega \}$. Define random variables $A_i^k: \Omega \to A_i$, $i \in \{P, B, N\}$, as the actions taken by the Pitcher, Batter, and Nature respectively at pitch cycle $k$. The events $\{A_P^k = a_P\}$, $\{A_B^k = a_B\}$, and $\{A_N^k = a_N\}$ are the corresponding measurable subsets of $\Omega$. In particular, the conditional probability of Nature taking action $a \in A_N$ at cycle $k$, given the history up to the Batter's move in that cycle, is:
\[
\mathbb{P}(A_N^k = a \mid \mathcal{F}_k).
\]

Let $z^{\ast} \in \Omega$ be a terminal history associated to terminal node $X^{\ast}_{\tau}$ and $x^{\ast}$ be the node in path $X^{\ast}$ owned by nature during pitch cycle $j$  (naturally we take $j \leq \rho(X^{\ast})$). The associated history $h^{\ast}$, to the node $x^{\ast}$,  corresponds to the sequence of choices made in the first $j$ cycles of the plate appearance up to, and including, the Batter's current action:
\[
h^{\ast}= \pi_j(z) = (a_P^{1*}, a_B^{1*}, a_N^{1*}, a_P^{2*}, a_B^{2*}, a_N^{2*}, \dots, a_P^{j*}, a_B^{j*}).
\]
For each pitch cycle $k \in \{1, \dots, j\}$ we have:
\[
\{A_P^k = a_P^{k*}\}= \{z \in \Omega: A_P^k(z) = a_P^{k*}\} = \{ z \in \Omega : \text{the pitcher's action in cycle } k \text{ is } a_P^{k*} \}
\]
\[
\{A_B^k = a_B^{k*}\}= \{z \in \Omega:A_B^k(z) = a_B^{k*}\} = \{ z \in \Omega : \text{the batter's action in cycle } k \text{ is } a_B^{k*} \}
\]

Next we introduce the (natural) conditional independence assumption for our baseball environment; we assume that the distribution of Nature's choice for $a_N^{k}$ on the current pitch depends exclusively on the actions chosen by the Pitcher and Batter during this specific cycle, and is conditionally independent of all preceding cycles given the current actions. Mathematically, this allows us to collapse the conditioning on the full path  down to the current cycle's events:

\begin{equation}  \label{conditionalIndependenceAssumption}
\mathbb{P}(A_N^k = a_N^{k} \mid \mathcal{F}_k)(z^{\ast}) = \mathbb{P}\!\left(A_N^k = a_N^{k} \mid \{A_P^k = a_P^{k*}\} \cap \{A_B^k = a_B^{k*}\}\right)(z^{\ast}).
\end{equation}
The above expression is conveniently modeled as there is an abundance of data available (see our Section \ref{movesByNature}).  This completes the specification of the probability transition measures at every Nature node across the game tree which is put to work in Section \ref{sec:strategic-form}.

\section{Plate Appearance Model}
\label{sec:plate-appearance-model}

\subsection{Records}  \label{record}
Our baseball model has a \emph{record} quantity that records information of a game at a given plate appearance within a half-inning. We define the record variable $s$ that encodes the runners on base and the number of outs $B$. The variable $s$ will also be referred, occasionally, as a {\it state}. The number of runners {\it on base} as a set is $OB := \{\emptyset, 1, 2, 3,  12, 13, 23, 123\}$. We write the number of {\it outs} in the game as the set $O := \{0, 1, 2\}$. 


\begin{definition} \label{def:state}
    We define the set of records $\mathcal{R}$ as,
\[
    \mathcal{R}:= OB \times O,~~~\mbox{elements of $\mathcal{R}$ are denoted}~s=(ob,o).
    \]    
\end{definition}

For example, $s= (ob, o) =\bigl(12, 2\bigr) \in \mathcal{R}$, corresponds to runners on first base and second base with two outs recorded. 

The following definition is motivated by the need to define a function that maps the current record, and the results of a plate appearance (a terminal history) to a new record.

A function $f$ is introduced next, its intent is to approximate
baseball dynamics in a computationally tractable way. In reality, runner advancement depends on many
contextual variables (e.g., runner speed, batted-ball location, and defensive decisions). Our
implementation abstracts away these factors and instead uses a fixed rule-based heuristic that deterministically updates base occupancy and outs as a function of $z \in Z$.
\begin{definition}[Record-transition map]\label{def:state-dynamics}
We define a function
\[
f:\mathcal{R}\times Z\longrightarrow
\mathcal{R}\cup\{(\emptyset,3)\}
\]
by cases as follows:
\smallskip
\begin{enumerate}
    \item \textbf{Out} If $z \in \{\textsc{Out},\textsc{Strikeout}\}$  and $o < 2$, then one out is recorded ($o'= o+1$) and the base occupancy is unchanged:
    \[
  f((ob, o), z) = (ob, o+1).
    \]
    If $o=2$, the additional out is the third out and ends the current
half-inning. We represent this terminal situation by
\[
f((ob,2),z)=(\emptyset,3).
\]

    \item \textbf{Walk} If $z=\textsc{Walk}$, the batter advances to first and existing runners
    advance only when forced: if first base was occupied then the runner on first is forced to second; if second was also
    occupied then the runner on second is forced to third; if all three bases were occupied then one run scores.
    
    \item \textbf{Single.} If $z=\textsc{Single}$, the batter reaches first and existing runners advance according to a
    deterministic case distinction intended to reflect typical outcomes: runners in scoring position are advanced home,
    and runners on first base advance to third base.
    
    \item \textbf{Double.} If $z=\textsc{Double}$, the batter reaches second; runners on second and third are assumed to score,
    and a runner on first advances to third base.
    \item \textbf{Triple.} If $z=\textsc{Triple}$, all existing runners score and the batter reaches third.
    \item \textbf{Home run.} If $z=\textsc{HR}$, all existing runners score (including the batter) and the bases are cleared.
\end{enumerate}
\end{definition}
The record $(\emptyset,3)$ is used only to represent the end of the
half-inning and is not an element of the ordinary record space
$\mathcal{R}$. For purposes of the utility calculation, we set
\[
RE((\emptyset,3))=0.
\]

\subsection{Utility function}

The basic baseball terminology used in this section is collected in
Appendix~\ref{app:baseball-background}. Fix an initial base--out record
$s^-\in\mathcal{R}$, which places the plate appearance within the larger
evolution of the half-inning. The count and the stopping rule determine the
continuation and termination possibilities available during the plate
appearance; see Definitions~\ref{count} and~\ref{stoppingTime}. A terminal
history $z\in Z$ ends the plate appearance and produces the successor record
\[
s^+=f(s^-,z),
\]
together with $r(s^-,s^+)$ runs scored during the transition. The utility
should therefore reflect both the runs scored immediately and the effect of
the terminal outcome on the expected continuation of the half-inning.

To define the utility of a terminal history $z\in Z$, we use the baseball
measure \emph{Run Expectancy} (RE), popularized by Tom Tango
\cite{tangotiger-re24}. There are eight possible base-occupancy
configurations and three possible numbers of outs, giving the $24$ records in
$\mathcal{R}$. For each record $s\in\mathcal{R}$, $RE(s)$ is the expected
number of runs scored from that record until the end of the half-inning. These
values are estimated empirically by averaging the number of subsequent runs
scored whenever the corresponding record occurs. We use the run-expectancy
values reported by Tom Tango \cite{tangotiger-rematrix}, together with the
terminal convention $RE((\emptyset,3))=0$ introduced after
Definition~\ref{def:state-dynamics}.

\medskip

Importantly, the RE value is sensitive to the record of the game. That is, the utility of a single is larger in a higher leverage record of the game, this matches our intuition that a single with the bases loaded is more valuable than a single with no runners on base. There are other metrics that measure the ``utility" of a terminal event like linear weights. Furthermore, Yee and Deshpande \cite{yee2023} generalized the run expectancy measure by conditioning on more record's parameters like score differential, inning, baserunners and count. For the purposes of our model we are satisfied with the record dependence coarseness of RE. We define the utility of a terminal event  $\{\text{Single, Double, Triple, HomeRun, Out, Walk}\}$ as the difference of the run expectancy of the record before and the record after, plus the runs that were scored between the states.

Writing the {\it records}, before and after the terminal history $h$, as $s^{-}$, and $s^{+}$ respectively (see Definition \ref{def:state} for records), this utility function is formally given by:

\begin{definition}\label{def:utility}
Define the utility functions $u_B, u_P : Z \times \mathcal{R} \to \mathbb{R}$ by:
\[
u(z) \equiv u_B(z, s^-) \equiv RE\bigl(f(s^{-}, z)\bigr) - RE(s^{-}) + r\bigl(s^-, f(s^{-}, z)\bigr)
\]
\[
u_P(z, s^-) := -u_B(z, s^-), \qquad \forall\, z \in Z,\; s^- \in \mathcal{R},
\]
where  $r(s^-, s^+) \in \{0,1,2,3,4\}$ is the {\it number of runs scored during the state transition} $s^- \to s^+$.
Notice that $s^-$ is a fixed external parameter namely, the record at the start of the plate appearance.
\end{definition}
For a fixed initial record $s^-$, the utility of a terminal history depends
only on its terminal plate-appearance outcome and on the resulting record
transition. In particular, an Out and a Strikeout have the same utility,
because they produce the same successor record and score the same number of
runs. Thus, the seven terminal outcomes generate at most six distinct utility
values, and further numerical coincidences may occur for particular
run-expectancy values. The same terminal outcome may also have different
utility under different initial records, since both the initial run expectancy
and the resulting record depend on $s^-$.
\subsection{Modeling Nature's distribution}  \label{movesByNature}

A behavioral strategy for player $i$ is a collection of probability distributions
$\{\beta_i^j\}_{U_i^j\in\mathcal U_i}$, with one distribution associated with each
information set $U_i^j$. For every $U_i^j\in\mathcal U_i$, the distribution
$\beta_i^j$ is defined on the corresponding action set $A(U_i^j)$. For Nature, the behavioral distribution at an information set depends on the
Pitcher and Batter actions in the current pitch cycle. More precisely, if the
Nature information set $U_N^j$ is preceded by the action pair
$(a_P^j,a_B^j)$, we set
\[
\beta_N^j(a_N)
=
\mathbb P\!\left(
A_N=a_N
\,\middle|\,
A_P=a_P^j,\,
A_B=a_B^j
\right),
\qquad a_N\in A_N.
\]
Thus, Nature need not use the same distribution at every information set.
Rather, we assume that the same conditional probability kernel is used at all
Nature nodes preceded by the same current Pitcher--Batter action pair,
independently of the earlier history.

We obtain MLB Statcast pitch-level data for April 2024 from Baseball
Savant~\cite{mlb-statcast}, using the \texttt{baseballr}
package~\cite{baseballr}. Each observed pitch is mapped to the action spaces
of the game. Pitch type is classified as either fastball-family or offspeed,
while pitch location is classified as top, middle, bottom, or chase. The
Batter's action is recorded as either swing or take, and the observed result is
mapped to Nature's action set
\[
A_N
=
\{\text{Ball},\text{Strike},\text{Out},\text{Single},
\text{Double},\text{Triple},\text{HomeRun}\}.
\]

The conditional probabilities defining $\beta_N^j$ are estimated using a
multinomial logistic regression with Nature's action as the categorical
response and the current Pitcher and Batter actions as explanatory variables.
For a fixed baseline outcome $a_N^0\in A_N$, the model takes the form
\[
\log
\frac{
\mathbb P(A_N=a_N\mid A_P=a_P,A_B=a_B)
}{
\mathbb P(A_N=a_N^0\mid A_P=a_P,A_B=a_B)
}
=
\eta_{a_N}(a_P,a_B),
\qquad
a_N\neq a_N^0,
\]
where the functions $\eta_{a_N}$ are determined by the chosen regression
specification.


\section{Strategic Form}
\label{sec:strategic-form}

\subsection{Pure and Mixed Strategies. Game in Normal Form}  \label{sec:strategies}
Suppose we have $L_i$ information sets for player $i \in \{P,B\}$. As in Section \ref{subsec:information-set}, we denote the collection of information sets by $\mathcal{U}_i = \{U^{j}_{i}\}^{L_i}_{j=1}$. Assuming a fixed ordering of the information sets, Definition \ref{pureStrategy} below defines a pure strategy for player $i$ as a tuple of length $|\mathcal{U}_i|$ where the entry at component $j$ is the unique action specified at information set $U_i^j$.

\begin{definition}[Pure strategies \cite{gametheory_maschler}] \label{pureStrategy}
A {\it pure strategy} $s_i$, $i \in \{P, B\}$, is a function from each of the players' information sets to the set of actions available at that the information set. Using the notation $s_i= (s_i^1, \ldots, s_i^{L_i})$, we let:
\[
s^j_i: U^j_i \rightarrow A(U^j_i), \qquad \text{subject to } s^j_i(x)= s^j_i(x') ~\mbox{for all}~~x, x' \in U^j_i
\]
i.e. the $s^j_i$ are constant functions on their domains.
\end{definition}
For example, a strategy for the pitcher can be written as: \[
s_{P}\;=\; (\,a_{P}^{1},\dots, a_{P}^j, \dots ,a_{P}^{L_P}), 
\]
where $s^j_P(U_P^j) = a_P^j$ for all $j \in \{1, \dots, L_P\}$.

Therefore, the set of pure strategies for player $i$ is a cartesian product; i.e. $s_i$ as in Definition \ref{pureStrategy} belongs to:
\begin{equation} \nonumber
 \Sigma_i \equiv \prod_{U^j_i \in \mathcal{U}_i} A(U^j_i).
\end{equation}

\begin{definition}[Strategy Profile]\label{def:strategy-profile}
Let $\Sigma_B$ and $\Sigma_P$ be the set of all pure strategies for batter and pitcher respectively. A {\it strategy profile} is a fixed tuple $(s_B, s_P) \in \Sigma_B \times \Sigma_P$. 
\end{definition}

Note that with a fixed strategy profile, the actions by both the pitcher and batter for the entire plate appearance are predetermined at the start of the game. The variability in the possible terminal outcomes is a result of any unpredictability from nature's moves. The following definition quantifies the conditional probability of a terminal outcome given a fixed strategy profile.

\begin{definition}[Conditional pathwise probabilities] \label{def:reach-probability}
For each tree path $X$, its terminal node $X_{\tau}$ gives an associated terminal history $z \text{ in } Z$
with $z = \bigl(a_{P}^{1}, a_{B}^{1}, a_{N}^{1}, a_{P}^{2}, a_{B}^{2}, a_{N}^{2}, \dots, a_{P}^{k}, a_{B}^{k}, a_{N}^{k} \bigr)$  and  $k = \rho(X)$ (where $\rho(X)$ is given in  (\ref{pitchCycleNumber})).
Let $s_P(U^j_P) =a_P^{j^\star}$ and $s_B(U^j_B)= a_B^{j^\star}$ be a given strategy profile. The (discrete) probability of reaching atom $z$,  conditioned on a strategy tuple $(s_B, s_P)$, is defined to be the probability of the concatenated nature's actions along the terminal history $z$ while being consistent with the fixed actions in $(s_B, s_P)$: 
\begin{equation} \label{probabilityConditionalOnStrategies}
\mu\bigl(z \,\,|\,\,(s_B,s_P)\bigr) = \prod_{j = 1}^k~\mathbb{P}(A_N^j = a_N^{j} \mid \mathcal{F}_j) (z) \cdot {\bf 1}_{\{s_P(z) = a_P^{j^\star}, \, s_B(z) = a_B^{j^\star}\}}(z),
\end{equation}
where $s_i(z) \equiv s_i(X \cap \mathcal{U}_i)= s(X \cap U^j_i)= s(u^j_i)$ and $u^j_i$ is any node in $X \cap U^j_i$. As in \autoref{subsec:nature-moves}, we have the sequence of sigma-algebras $\mathcal{F}_k$ indexed by completed pitch cycles. The expresssion $\mathbb{P}(A_N^j = a_N^{j} \mid \mathcal{F}_j) (z)$ is handled by our assumption in (\ref{conditionalIndependenceAssumption}).

\begin{remark}[Alternative notations in Game Theory]
\label{rem:behavioral_connection}
While Definition \ref{def:reach-probability} deviates somewhat from the behavioral transition specification used in traditional game theory texts the end result is the same.  The standard presentation of extensive-form games, assigns a probability of $1$ to actions aligned with a pure strategy profile and $0$ probability otherwise. Under that paradigm, one would define a joint transition probability $p_i(z \mid s_B, s_P)$ for each pitch cycle $i$ as:
\[
p_j(z \mid s_B, s_P) = \begin{cases} 
\mathbb{P}(A_N^j = a_N^{j^\star} \mid \mathcal{F}_j)(z) & \text{if } s_P(U_P^j) = a_P^{j^\star} \text{ and } s_B(U_B^j) = a_B^{j^\star} \\
0 & \text{otherwise}
\end{cases}
\]
The path probability for the entire sequence would then factorize in terms of these behavioral transitions:
\[
\mu\bigl(z \,\,|\,\,(s_B,s_P)\bigr) = \prod_{i = 1}^k p_i(z \mid s_B, s_P)
\]
In our notation, the indicator function ${\bf 1}_{\{\dots\}}(z)$ acts as this exact piecewise condition: it evaluates to $1$ cycle-by-cycle if the fixed path $z$ is consistent with the strategy profile $(s_B, s_P)$, and instantly zeroes out the entire product if any strategy choice deviates from the terminal history under evaluation.
\end{remark}
\end{definition}

\begin{definition}[Expected Utility] Let $U(s_B,s_P)$ be the expected payoff of the strategy profile $(s_B, s_P)$. That is, 
\begin{equation} \label{expectedPayoff}
U(s_B,s_P) := \sum_{z \in Z}\mu\bigl(z \,\,|\,\,(s_B,s_P)\bigr) \cdot u(z),
\end{equation}
where the utility function $u(z)$ was defined in Definition \ref{def:utility} (in particular $u(z)= u_B(z)$).
\end{definition}

Assuming a zero-sum structure where $U(s_B, s_P)$ represents the expected utility of the batter, the game's payoff matrix $U$ is constructed by computing these values for all $(s_B,s_P) \in \Sigma_B \times \Sigma_P$, where rows correspond to batter strategies and columns correspond to pitcher strategies.

Finally, a {\it mixed strategy} $\pi_i$ for player $i$ is a probability distribution on $\Sigma_i$, the set of those strategies is denoted by $\Delta(\Sigma_i)$. Writing $\pi \equiv (\pi_B, \pi_P)$ allows us to define the expected payoff associated to a mixed strategy: 
\begin{equation} \label{expectedPayoffForMixed}
U(\pi) \equiv \sum_{(s_B, s_P)} \pi_B(s_B)~U(\pi_B,\pi_P)~\pi_P(s_P) = \sum_{(s_B, s_P)}~\sum_{z \in Z}~\pi_B(s_B)~\pi_P(s_P)~\mu\bigl(z \,\,|\,\,(s_B,s_P)\bigr) \cdot u(z),
\end{equation}
A special way to define a mixed strategy is  to randomize pure strategies locally, namely by indicating a probability distribution over the set of possible actions for each information set. Such an alternative way to set probabilities is called a {\it behavioral strategy}.

The specification of $2$-tuples $\pi=(\pi_B, \pi_P)$, where $\pi_i$ are probability distributions on $\Sigma_i$, and $U(\pi)$ as in (\ref{expectedPayoffForMixed}), define what is called a {\it game in normal form}.


\subsection{Combinatorial Complexity and Computational Intractability}
\label{subsec:complexity-count}

The standard normal-form linear programs, ~\cite{gametheory_maschler,agt_nisan},
\eqref{lp:primal-strategic} and \eqref{lp:dual-strategic}; see
Appendix~\ref{linearProgramFormulation}, establish the existence and
characterization of a minimax equilibrium. Their direct computational use,
however, requires the payoff matrix indexed by the players' complete
pure-strategy plans. In a multi-stage extensive game, the number of such plans
grows exponentially with the number of information sets, even though the game
tree itself gives a much more compact description of the interaction. The
following count illustrates the resulting computational obstruction in the
plate-appearance model.

\begin{remark}
The number of pure strategies for the Batter and Pitcher grows exponentially
with the size of the tree. The number of information sets for either player is
$\bigoh{b^m}$, where $b$ is the effective branching factor per pitch cycle and
$m$ is the maximal number of pitch cycles. In our game,
\[
b=8\cdot 2\cdot 2=32,
\]
corresponding to eight Pitcher actions, two Batter actions, and at most two
Nature outcomes, Ball and Strike, that allow the plate appearance to continue.
This is an upper bound: at three balls, a further Ball terminates the plate
appearance; at two strikes, a further Strike terminates it; and at a full count,
neither outcome produces a continuation. As remarked in  Section \ref{subsec:history_set}, the maximum depth of our tree is $6$ pitch cycles. Hence the number of pitcher, batter, and nature information sets is $\bigoh{32^6}$.
By Definition \ref{def:strategy-profile}, the number of pure strategies for the pitcher is $\bigoh{8^{(32^6)}}$, and for the batter $\bigoh{2^{(32^6)}}$.
\end{remark}

Consequently, populating the payoff matrix $U \in \mathbb{R}^{|\Sigma_B| \times |\Sigma_P|}$ requires evaluating $\bigoh{8^{(32^6)} \cdot {2^{(32^6)}}}$ matrix entries. Hence, even though the linear program runs in polynomial time with respect to the input, since the input is exponential with respect to the size of the tree, it is intractable to use \autoref{thm:pure-strat-lp} to solve for a minimax solution. This limitation directly motivates the transition to the sequence-form formulation of von Stengel \cite{koller_megiddo_vonstengel_1996}, which bypasses the pure strategy space entirely by framing the optimization over realizable sequence paths whose growth is strictly linear with respect to the size of the game tree.

The described computational obstruction can be bypassed whenever all players have perfect recall (as is the case in our model). The reason is Kuhn's Theorem  (\cite{kuhn2003lectures}) that indicates that, for each player $i$ with perfect recall, any of his mixed strategies has an {\it equivalent} behavioral strategy. The notion of equivalent here means that the said strategies induce the same terminal probabilities. A behavioral strategy is specified by local action probabilities at the
player's information sets. Thus, the number of probability variables required
to represent such a strategy grows linearly with the size of the game tree,
whereas the number of complete pure strategies underlying a mixed-strategy
representation grows exponentially. This computational saving is incorporated
into von Stengel's sequence-form linear programs, which we describe next.

\section{von Stengel's Sequence Form}\label{sec:sequence-form}
The computational obstruction identified in
Subsection~\ref{subsec:complexity-count} arises from representing strategies
as probability distributions over complete pure-strategy plans. Von Stengel's
sequence form~\cite{vonstengel_1996,vonStengel2007} avoids this obstruction by
representing each strategic player through the sequences of that player's own
actions along the game tree. Pure strategies for the Pitcher and Batter are
therefore replaced by sequences, introduced in
Definition~\ref{definition:sequences}, and behavioral strategies are encoded
by realization plans assigning nonnegative weights to those sequences subject
to linear flow constraints. Nature's moves are incorporated in the same
sequence-based representation, but with the realization plan induced by the
fixed behavioral strategy $\beta_N$ introduced in
Section~\ref{movesByNature}. For games with perfect recall, realization plans
are equivalent to behavioral strategies and lead to linear-programming
formulations whose dimensions grow linearly with the extensive game tree.
This provides the computational representation used below to construct and
solve the Pitcher--Batter game without enumerating its exponentially large
normal form. The presentation in this section follows
\cite{vonstengel_1996}.
\begin{definition} [Sequences] \label{definition:sequences}
The {\it sequence} for a player $i$ defined by a node $n$ of the game tree is the \emph{unique} sequence of edge labels for the edges leaving a node owned by player $i$ on the path from the root to node $n$. Such sequence is denoted $\sigma_i(n)$
\end{definition}
\noindent
Therefore, sequences for a player $i$ are the restriction of a history (see Section \ref{subsec:history_set}) to $i$'s own moves.

\begin{proposition}\label{prop:infoset-sequences}
Every node in an information set $U_P^i$ for Pitcher (respectively Batter) defines the same sequence for the Pitcher (respectively Batter).
\end{proposition}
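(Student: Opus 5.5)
The plan is to handle the two players separately, using the explicit structure of the information partitions from Section~\ref{subsec:information-set}. For the Pitcher the statement is immediate: every Pitcher information set $U_P^i$ is a singleton $\{x\}$. Hence there is only one node in it, and it trivially defines a single sequence $\sigma_P(x)$.

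The substantive case is the Batter. Fix a Batter information set $U_B^x=\{x'\in V_B:(x,x')\in E\}$ for some Pitcher node $x\in V_P$, and take two nodes $v,v'\in U_B^x$. I would argue directly from the tree structure, in the same way as the second condition in the proof of Proposition~\ref{prop:batter-perfect-recall}. The unique root paths to $v$ and $v'$ are $x^0\to\cdots\to x\to v$ and $x^0\to\cdots\to x\to v'$. They coincide up to and including $x$, and differ only in the last edge, which leaves $x$. Since $x\in V_P$, that last edge is labelled by a Pitcher action and is not a Batter move.

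By Definition~\ref{definition:sequences}, $\sigma_B(v)$ is obtained from the root path to $v$ by keeping only the labels of edges leaving Batter-owned nodes. All such edges lie on the common initial segment from $x^0$ to $x$, because the final edge leaves a Pitcher node. Therefore $\sigma_B(v)=\sigma_B(x)=\sigma_B(v')$.

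Alternatively, one could cite the general fact that perfect recall, via condition 2 of Definition~\ref{def:perf-recall}, forces the player's own action sequence to agree across an information set; Proposition~\ref{prop:batter-perfect-recall} supplies perfect recall here. I would mention this as the conceptual reason and keep the direct tree argument as the proof.

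The only point needing care is to be precise that the differing final edge is owned by the Pitcher and hence excluded from the Batter's sequence. It is also worth noting explicitly that in the stopped tree the common prefix is unaffected, since $x$ is not terminal.
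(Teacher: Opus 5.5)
Your proof is correct, but it argues differently from the paper. The paper does not split into cases. It invokes perfect recall for both players (Proposition~\ref{prop:batter-perfect-recall} and the singleton remark after it). It then reads off from condition~2 of Definition~\ref{def:perf-recall} that the player's past information sets and own actions agree along the root paths to any two nodes of the same information set, so their sequences coincide.

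You instead use the concrete structure of the partition. Pitcher sets are singletons, and every Batter set $U_B^x$ consists of siblings under one Pitcher node $x$. So the two root paths differ only in a final Pitcher-labelled edge, which Definition~\ref{definition:sequences} discards.

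Your route is more elementary and self-contained: it inlines the tree argument the paper uses to prove Proposition~\ref{prop:batter-perfect-recall} and skips the detour through the definition of perfect recall. The paper's route is the generic one. It is exactly the fact that makes $\sigma_v$ well defined in any perfect-recall game. It would also survive changes to the information structure under which Batter information sets are no longer sibling sets under a common Pitcher node, where your common-parent argument would not apply. Your closing remark naming perfect recall as the conceptual reason already points to this.
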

\begin{proof}
As shown in Section~2 both the pitcher and batter have perfect recall. Thus, by Definition \ref{def:perf-recall} for two nodes $x, \,x' \in U_P^i$, the information sets that intersect the path from the root to $x$, $x'$, as well as the actions taken at those information sets are the same. Thus, it follows that the sequences defined by $x$ and $x'$ must be the same.
\end{proof}

Sequences represent an alternative description of strategies. We inductively define the set of sequences for a strategic player $i \in \{P, B, N\}$ as follows: let $\mathcal{U}_i$ represent the collection of all information sets for player $i$. Consider an information set $v \in \mathcal{U}_i$. By \autoref{prop:infoset-sequences}, every node $x \in v$ shares the identical history of actions chosen by player $i$. We can therefore uniquely define $\sigma_v$ as the sequence of player $i$'s actions leading from the root of the game tree to the information set $v$. If player $i$ takes an action $a \in A_i(v)$ at this information set, we denote the single concatenated sequence that appends action $a$ to the history as $\sigma_v a$. The set of all sequences $S_i$ for player $i$ is then constructed by taking the union of the empty sequence (representing the root) and all possible valid action extensions across their information sets:
\[
S_i := \{\varnothing\} \cup \{ \sigma \equiv \sigma_v a \mid v \in \mathcal{U}_i, \; a \in A_i(v) \},
\]
where $\sigma_v a \equiv \sigma_v \cup \{a\}$ and so $s_i \in S_i$ is a set whose elements are (results of) actions. It follows that the total number of sequences for player $i$ is linear in the size of the game tree: $|S_i| = 1 + \sum_{v \in \mathcal{U}_i} |A_i(v)|$.
This demonstrates the compactness of the sequence form representation when compared to the pure strategy space, whose cardinality grows multiplicatively: $|\Sigma_i| = \prod_{v \in \mathcal{U}_i} |A_i(v)|$.

\subsection{Realization Plans}
Analogous to mixed strategies, a \emph{realization plan} assigns the probability with which a sequence is played. 
Given a behavioral strategy $\beta_i$, the induced realization plan $r_i : S_i \to \mathbb{R}$ is defined on the sequence $s_i \in S_i$ by
\begin{equation}
    r_i(s_i) = \prod_{c \in s_i} ~\beta_i(c).
\end{equation}

We say the realization plan $r_i$, defined above, is induced by $\beta_i$.
\begin{remark}
Below, we will also rely on a more precise notation namely:

\[
\beta_i(a \mid U^j_i),
\]
where $a \in A(U^j_i)$ for a given information set $U^j_i$.
Clearly, for every  $U^j_i \in \mathcal{U}_i$, the functions $\beta_i(\cdot \mid U^j_i)$ satisfy:
\begin{enumerate}
    \item $\beta_i(a \mid U^j_i) \ge 0$ for all $a \in A(U^j_i)$,
    \item $\sum_{a \in A(U^j_i)} \beta_i(a \mid U^j_i) = 1$.
\end{enumerate}
\end{remark}

Instead of using a behavioral strategy to define a realization plan, we can characterize certain structural properties that must be satisfied in order for a function to be a valid realization plan. 
Intuitively, these are network \emph{flow constraints}: a realization plan must assign probabilities to the extended sequences of an information set $v$, such that the sum of the probabilities assigned to the extended sequences $\sigma_v a$ equals the probability assigned to the parent sequence $\sigma_v$. Furthermore, the probability assigned to any sequence must be non-negative. Formally, these linear constraints are written as:

\begin{align}
r_i(\varnothing)
    &= 1,
    \label{realizationPlanConstraints1}\\
r_i(\sigma_v)
    - \sum_{a \in A(v)} r_i(\sigma_v a)
    &= 0,
    \quad \forall\, v \in \mathcal{U}_i,
    \label{realizationPlanConstraints2}\\
r_i(\sigma)
    &\geq 0,
    \quad \forall\,\sigma \in S_i.
    \label{realizationPlanConstraints3}
\end{align}
\begin{definition}[Realization plan] \label{def:realization-plan}
A function $r_i : S_i \to \mathbb{R}$ that satisfies the linear equations (\ref{realizationPlanConstraints1})-(\ref{realizationPlanConstraints3}) is a \emph{realization plan}.
\end{definition}
Notice that constraints (\ref{realizationPlanConstraints1})-(\ref{realizationPlanConstraints3}) are automatically satisfied for Nature as its realization plans are defined by $\beta_N$ which is fixed (being a non-strategic player). $\beta_N$ was introduced in Section \ref{movesByNature}.

Conversely, every realization plan satisfying
Definition~\ref{def:realization-plan} is induced by a behavioral strategy;
see Proposition~\ref{prop:realization-plan-induced} in
Appendix~\ref{vonStengelComputationOfEquilibria}.

\begin{definition}\label{def:seqform-payoff}
Define the payoff function 
\[
g : S_P \times S_B \times S_N \to \mathbb{R},
\]
by:
\[
g(s) := 
\begin{cases}
u(z), & \text{if the sequences}~ s=(s_P, s_B, s_N) \text{is defined by a leaf node}~ z \in Z, \\
0, & \text{otherwise}.
\end{cases}
\]
\end{definition}

We define below the expected payoff given the realization plan for the players. Let $r_N$ be the realization strategy induced by $\beta_N$, where $\beta_N$ is Nature's distribution introduced in Section \ref{movesByNature}. In particular there is a single $r_N$ as there is a single $\beta_N$. 

\begin{definition}[Expected Payoff for a given realization plan]
    Let $r = (r_P, r_B, r_N)$, and $s \in S \equiv S_P \times S_B \times S_N$. Then, the expected payoff for the Batter given $r$ is given by: 
    \begin{equation} \label{expPayoffForGivenRealizationPlan}
    G_B(r) = \sum_{s \in S}g(s) ~ \prod_{i \in \{P, B, N\}}r_i(s_i)
    \end{equation}
where $s=(s_P, s_B, s_N)$. The expected payoff for the Pitcher is computed analogously.
\end{definition}

In games with perfect recall, as is the case of our model, one can replace mixed strategies by behavioral strategies and the latter can be encoded by realization plans. Therefore, the {\it strategic sequence form formulation} given  by realization plans and expected payoff (\ref{expPayoffForGivenRealizationPlan}) represent an equivalent formulation of the original game. See Section 4 of  \cite{vonstengel_1996} for details.

Appendix~\ref{app:realization-behavioral} proves that every valid realization
plan is induced by a behavioral strategy and describes the relations among
pure, mixed, behavioral, and realization-plan representations.
Appendix~\ref{app:sequence-form-linear-programs} gives the sequence-form linear
programs for the Batter's and Pitcher's best-response problems, together with
the coupled programs used to compute a minimax equilibrium. The dual variables
of the best-response programs are interpreted through dynamic programming in
Section~\ref{dynamicProgramming}, while the equilibrium programs provide the
computational basis for the numerical results in Section~\ref{sec:results}.


\section{Dynamic Programming}   \label{dynamicProgramming}

This section develops two complementary connections between dynamic programming
and the sequence-form computation of extensive games. The first applies to the
best-response problem in a general finite two-person zero-sum extensive-form
game with perfect recall. Once the opponent's realization plan is fixed, the
responding player's optimization problem is a linear program whose size is
linear in the game tree and whose dual variables are indexed by that player's
information sets; see
Appendix~\ref{vonStengelComputationOfEquilibria}. We show that these dual
variables have a direct dynamic interpretation. After normalization by the
reach weight generated by the opponent and Nature, they coincide with
conditional continuation values satisfying a Bellman recursion. Equivalently,
the dual variable at an information set $v$ decomposes into the product of the
weight of reaching $v$ and the value of continuing from $v$. This result,
developed for both the Pitcher and Batter and summarized in
Theorems~\ref{optimalPitcherAtu} and~\ref{thm:batter-dp=dual}, connects the
linear-programming computation with statewise dynamic quantities that can be
used to interpret a best response. It applies to general games with perfect
recall, but concerns optimization against a fixed opponent strategy and does
not, in general, replace the coupled sequence-form programs required to
compute a minimax equilibrium.

Subsection~\ref{subsec:count-state-equilibrium-dp} develops a second, more
specialized computational reduction. Under the additional state-Markov
assumptions of the baseball model, the extensive continuation game can be
indexed by the fixed initial base--out record $s^-$ and the current count $c$.
The full minimax equilibrium can then be computed recursively by solving a
local zero-sum matrix game at each non-terminal count. The resulting quantity
$V_{s^-}(c)$ is the minimax continuation value of the plate appearance
conditional on beginning at the state $(s^-,c)$; see
Proposition~\ref{prop:count-state-equilibrium-recursion}. Thus, the first
dynamic program interprets the sequence-form best-response computation in the
general perfect-recall setting, whereas the second computes the equilibrium
itself after exploiting the sufficient-state structure of the baseball model.
Accordingly, $V_{s^-}(c)$ is not a best-response value against a fixed
opponent, is not a sequence-form dual variable, and does not include the
probability of reaching the count. The notation and interpretation of the two
value constructions must therefore remain distinct.

In our developments, and  for convenience, we refer to a running example introduced next.

\begin{runningex} \label{ex:my_example}

Consider the example game tree in Figure~\ref{fig:exact_game_tree}: a mini-reduced plate
appearance where $A_P = \{F \text{ (fastball)}, O \text{ (offspeed)}\}$,
$A_B = \{S \text{ (swing)}, T \text{ (take)}\}$, and
$A_N = \{H \text{ (hit)}, O \text{ (out)}, S \text{ (strike)}\}$. A plate appearance
terminates after two pitches.

\begin{figure}[h!]
\centering
\begin{tikzpicture}[
    scale=1.45,
    font=\scriptsize,
    dot/.style={circle, draw, fill=black, inner sep=1.2pt},
    circle_node/.style={circle, draw, minimum size=3.5mm, inner sep=0pt},
    edge_label/.style={font=\tiny, midway, above, sloped},
    edge_label_b/.style={font=\tiny, midway, below, sloped}
]
    \node[circle_node] (root) at (0, 6.5) {};
    \node[above=3pt] at (root) {$U^1_P$};

    \node[circle_node] (B1_1) at (-3.5, 5.2) {};
    \node[circle_node] (B1_2) at (3.5, 5.2) {};

    \draw[->, >=Stealth] (root) -- (B1_1) node[edge_label] {$F$};
    \draw[->, >=Stealth] (root) -- (B1_2) node[edge_label] {$O$};

    \draw[dashed, blue, rounded corners=8pt]
        ($(B1_1.west)+(-0.15,0.15)$) rectangle ($(B1_2.east)+(0.15,-0.15)$);
    \node[blue, right=4pt] at (B1_2.east) {$U^1_B$};

    \node[circle_node] (N1) at (-5.2, 4.0) {};
    \node[circle_node] (N2) at (-1.8, 4.0) {};
    \node[circle_node] (N3) at (1.8, 4.0) {};
    \node[circle_node] (N4) at (5.2, 4.0) {};

    \draw[->, >=Stealth] (B1_1) -- (N1) node[edge_label] {$S$};
    \draw[->, >=Stealth] (B1_1) -- (N2) node[edge_label_b] {$T$};
    \draw[->, >=Stealth] (B1_2) -- (N3) node[edge_label] {$S$};
    \draw[->, >=Stealth] (B1_2) -- (N4) node[edge_label_b] {$T$};

    \node[dot] (z1) at (-5.8, 2.8) {}; \node[below=2pt] at (z1) {$z_1$};
    \node[dot] (z2) at (-5.4, 2.8) {}; \node[below=2pt] at (z2) {$z_2$};
    \node[circle_node] (P2) at (-4.6, 2.8) {};
    \draw[->] (N1) -- (z1) node[edge_label] {$H$};
    \draw[->] (N1) -- (z2) node[edge_label_b] {$O$};
    \draw[->] (N1) -- (P2) node[edge_label] {$S$};

    \node[dot] (z3) at (-2.4, 2.8) {}; \node[below=2pt] at (z3) {$z_3$};
    \node[dot] (z4) at (-2.0, 2.8) {}; \node[below=2pt] at (z4) {$z_4$};
    \node[circle_node] (P3) at (-1.2, 2.8) {};
    \draw[->] (N2) -- (z3) node[edge_label] {$H$};
    \draw[->] (N2) -- (z4) node[edge_label_b] {$O$};
    \draw[->] (N2) -- (P3) node[edge_label] {$S$};

    \node[dot] (z5) at (1.2, 2.8) {}; \node[below=2pt] at (z5) {$z_5$};
    \node[dot] (z6) at (1.6, 2.8) {}; \node[below=2pt] at (z6) {$z_6$};
    \node[circle_node] (P4) at (2.4, 2.8) {};
    \draw[->] (N3) -- (z5) node[edge_label] {$H$};
    \draw[->] (N3) -- (z6) node[edge_label_b] {$O$};
    \draw[->] (N3) -- (P4) node[edge_label] {$S$};

    \node[dot] (z7) at (4.6, 2.8) {}; \node[below=2pt] at (z7) {$z_7$};
    \node[dot] (z8) at (5.0, 2.8) {}; \node[below=2pt] at (z8) {$z_8$};
    \node[circle_node] (P5) at (5.8, 2.8) {};
    \draw[->] (N4) -- (z7) node[edge_label] {$H$};
    \draw[->] (N4) -- (z8) node[edge_label_b] {$O$};
    \draw[->] (N4) -- (P5) node[edge_label] {$S$};

    \node[circle_node] (B2_1) at (-5.0, 1.5) {};
    \node[circle_node] (B2_2) at (-4.2, 1.5) {};
    \draw[->] (P2) -- (B2_1) node[edge_label] {$F$};
    \draw[->] (P2) -- (B2_2) node[edge_label_b] {$O$};
    \draw[dashed, red, rounded corners=4pt] ($(B2_1.west)+(-0.1,0.1)$) rectangle ($(B2_2.east)+(0.1,-0.1)$);
    \node[red, font=\tiny, below=8pt] at ($(B2_1)!0.5!(B2_2)$) {$U^2_B$};

    \node[circle_node] (B3_1) at (-1.6, 1.5) {};
    \node[circle_node] (B3_2) at (-0.8, 1.5) {};
    \draw[->] (P3) -- (B3_1) node[edge_label] {$F$};
    \draw[->] (P3) -- (B3_2) node[edge_label_b] {$O$};
    \draw[dashed, red, rounded corners=4pt] ($(B3_1.west)+(-0.1,0.1)$) rectangle ($(B3_2.east)+(0.1,-0.1)$);
    \node[red, font=\tiny, below=8pt] at ($(B3_1)!0.5!(B3_2)$) {$U^3_B$};

    \node[circle_node] (B4_1) at (2.0, 1.5) {};
    \node[circle_node] (B4_2) at (2.8, 1.5) {};
    \draw[->] (P4) -- (B4_1) node[edge_label] {$F$};
    \draw[->] (P4) -- (B4_2) node[edge_label_b] {$O$};
    \draw[dashed, red, rounded corners=4pt] ($(B4_1.west)+(-0.1,0.1)$) rectangle ($(B4_2.east)+(0.1,-0.1)$);
    \node[red, font=\tiny, below=8pt] at ($(B4_1)!0.5!(B4_2)$) {$U^4_B$};

    \node[circle_node] (B5_1) at (5.4, 1.5) {};
    \node[circle_node] (B5_2) at (6.2, 1.5) {};
    \draw[->] (P5) -- (B5_1) node[edge_label] {$F$};
    \draw[->] (P5) -- (B5_2) node[edge_label_b] {$O$};
    \draw[dashed, red, rounded corners=4pt] ($(B5_1.west)+(-0.1,0.1)$) rectangle ($(B5_2.east)+(0.1,-0.1)$);
    \node[red, font=\tiny, below=8pt] at ($(B5_1)!0.5!(B5_2)$) {$U^5_B$};

    \node[dot] (z9)  at (-5.2, 0.3) {}; \node[below=2pt] at (z9)  {$z_9$};
    \node[dot] (z10) at (-4.8, 0.3) {}; \node[below=2pt] at (z10) {$z_{10}$};
    \node[dot] (z11) at (-4.4, 0.3) {}; \node[below=2pt] at (z11) {$z_{11}$};
    \node[dot] (z12) at (-4.0, 0.3) {}; \node[below=2pt] at (z12) {$z_{12}$};
    \draw[->] (B2_1) -- (z9);  \draw[->] (B2_1) -- (z10);
    \draw[->] (B2_2) -- (z11); \draw[->] (B2_2) -- (z12);

    \node[dot] (z13) at (-1.8, 0.3) {}; \node[below=2pt] at (z13) {$z_{13}$};
    \node[dot] (z14) at (-1.4, 0.3) {}; \node[below=2pt] at (z14) {$z_{14}$};
    \node[dot] (z15) at (-1.0, 0.3) {}; \node[below=2pt] at (z15) {$z_{15}$};
    \node[dot] (z16) at (-0.6, 0.3) {}; \node[below=2pt] at (z16) {$z_{16}$};
    \draw[->] (B3_1) -- (z13); \draw[->] (B3_1) -- (z14);
    \draw[->] (B3_2) -- (z15); \draw[->] (B3_2) -- (z16);

    \node[dot] (z17) at (1.8, 0.3) {}; \node[below=2pt] at (z17) {$z_{17}$};
    \node[dot] (z18) at (2.2, 0.3) {}; \node[below=2pt] at (z18) {$z_{18}$};
    \node[dot] (z19) at (2.6, 0.3) {}; \node[below=2pt] at (z19) {$z_{19}$};
    \node[dot] (z20) at (3.0, 0.3) {}; \node[below=2pt] at (z20) {$z_{20}$};
    \draw[->] (B4_1) -- (z17); \draw[->] (B4_1) -- (z18);
    \draw[->] (B4_2) -- (z19); \draw[->] (B4_2) -- (z20);

    \node[dot] (z21) at (5.2, 0.3) {}; \node[below=2pt] at (z21) {$z_{21}$};
    \node[dot] (z22) at (5.6, 0.3) {}; \node[below=2pt] at (z22) {$z_{22}$};
    \node[dot] (z23) at (6.0, 0.3) {}; \node[below=2pt] at (z23) {$z_{23}$};
    \node[dot] (z24) at (6.4, 0.3) {}; \node[below=2pt] at (z24) {$z_{24}$};
    \draw[->] (B5_1) -- (z21); \draw[->] (B5_1) -- (z22);
    \draw[->] (B5_2) -- (z23); \draw[->] (B5_2) -- (z24);
\end{tikzpicture}
\caption{Extensive-form game tree mapping the sequence positions and information sets $U^j$ precisely to outcomes $z_1 \dots z_{24}$.}
\label{fig:exact_game_tree}
\end{figure}

\begin{remark}[Nature Actions and Continuation Paths]
While Nature's full action set is \\
$A_N = \{\text{Single}, \text{Double}, \text{Triple},
\text{HomeRun}, \text{Out}, \text{Strike}, \text{Ball}\}$, the tree at non-terminal counts
only shows two continuation branches out of each Nature node, since any in-play event
(Single, Double, Triple, HomeRun, Out) immediately ends the plate appearance
(Definition~1.2). Only a non-terminal \emph{Ball} or a non-terminal \emph{Strike} allows
play to continue into a subsequent pitch cycle.
\end{remark}

\paragraph{Batter sequence set.}
\[
S_B = \bigl[\, \varnothing,\; S,\; T,\; SS_{2},\; ST_2,\; SS_3,\; ST_3, \; TS_4, \; TT_4, \; TS_5, \; TT_5 \bigr]
\]

\paragraph{Constraint matrix $E$.}
\[
E = \;\;
\begin{array}{c|rrrrrrrrrrr}
 & \stackrel{\scriptsize s_0}{\varnothing}
 & \stackrel{\scriptsize s_1}{S}
 & \stackrel{\scriptsize s_2}{T}
 & \stackrel{\scriptsize s_3}{SS_{2}}
 & \stackrel{\scriptsize s_4}{ST_{2}}
 & \stackrel{\scriptsize s_5}{SS_{3}}
 & \stackrel{\scriptsize s_6}{ST_{3}}
 & \stackrel{\scriptsize s_7}{TS_{4}}
 & \stackrel{\scriptsize s_8}{TT_{4}}
 & \stackrel{\scriptsize s_9}{TS_{5}}
 & \stackrel{\scriptsize s_{10}}{TT_{5}}
\\ \hline
\varnothing & 1 & 0 & 0 & 0 & 0 & 0 & 0 & 0 & 0 & 0 & 0\\
v_{1}       & -1& 1 & 1 & 0 & 0 & 0 & 0 & 0 & 0 & 0 & 0\\
v_{2}       & 0 & -1 & 0 & 1 & 1 & 0 & 0 & 0 & 0 & 0 & 0\\
v_{3}       & 0 & 0 & -1 & 0 & 0 & 1 & 1 & 0 & 0 & 0 & 0\\
v_{4}       & 0 & -1 & 0 & 0 & 0 & 0 & 0 & 1 & 1 & 0 & 0\\
v_{5}       & 0 & 0 & -1 & 0 & 0 & 0 & 0 & 0 & 0 & 1 & 1\\
\end{array}
\]

\paragraph{Pitcher sequence set.}
\[
S_P =
\bigl[
\varnothing,\, F,\, O,\, FF_2,\, FO_2,\, FF_3,\, FO_3,\, OF_4,\, OO_4,\, OF_5,\, OO_5
\bigr]
\]

\noindent Let $F$ be the sequence-form constraint matrix with rows indexed by pitcher
information sets $v_0, v_1, \ldots, v_5$ and columns indexed by $s_0, \ldots, s_{10}$:
\[
F =
\setlength{\tabcolsep}{6pt}
\renewcommand{\arraystretch}{1.15}
\newcommand{\sidx}[1]{\raisebox{.4ex}{\tiny $s_{#1}$}}
\begin{array}{c|*{11}{c}}
 & \sidx{0} & \sidx{1} & \sidx{2} & \sidx{3} & \sidx{4} &
   \sidx{5} & \sidx{6} & \sidx{7} & \sidx{8} & \sidx{9} & \sidx{10}\\[-0.4ex]
 & \varnothing & F & O & FF_{2} & FO_{2} & FF_{3} & FO_{3} & OF_{4} & OO_{4} & OF_{5} & OO_{5}\\
\hline
\varnothing & 1 & 0 & 0 & 0 & 0 & 0 & 0 & 0 & 0 & 0 & 0\\
v_{1}       & -1& 1 & 1 & 0 & 0 & 0 & 0 & 0 & 0 & 0 & 0\\
v_{2}       & 0 & -1& 0 & 1 & 1 & 0 & 0 & 0 & 0 & 0 & 0\\
v_{3}       & 0 & -1& 0 & 0 & 0 & 1 & 1 & 0 & 0 & 0 & 0\\
v_{4}       & 0 & 0 & -1& 0 & 0 & 0 & 0 & 1 & 1 & 0 & 0\\
v_{5}       & 0 & 0 & -1& 0 & 0 & 0 & 0 & 0 & 0 & 1 & 1\\
\end{array}
\]
\end{runningex}

\vspace{.15in}
This section uses $h$ to denote a generic node of the game tree, rather
than the node symbol $x$ of Section~\ref{sec:game-theory-model}, since $x$ (and $y$) have
by now been fixed as the Batter's and Pitcher's realization plans
(Appendix~\ref{vonStengelComputationOfEquilibria}). This notation then overloads $h$, previously the label
for a \emph{history} (Section~\ref{subsec:history_set}), and now also employed for the {\it node that that history defines}.
The two usages of the notation are in a bijection, so no ambiguity results. With this convention, $\sigma_i(h)$ denotes the sequence of Definition~\ref{definition:sequences} -- the
sequence that node $h$ defines for player $i$ -- now made explicit for an arbitrary player
$i \in \{P,B,N\}$ and an arbitrary node $h$, not necessarily one owned by $i$. It
specializes to $\sigma_v$ of Section~\ref{sec:sequence-form} when $h$ belongs to an
information set $v \in \mathcal U_i$ of its own owner $i$, i.e.\ $\sigma_v = \sigma_i(h)$
for any $h \in v$ (well defined by Proposition~\ref{prop:infoset-sequences}); writing
$\sigma_i(h)$ for $i$ other than the owner of $h$ is what lets us speak, below, of the
Batter's or Nature's sequence at a Pitcher node, and vice versa. For $s \in S_P$, $Z[s] \equiv
\{z \in Z : \sigma_P(z) = s\}$ denotes the leaves whose \emph{full} Pitcher sequence equals
$s$ exactly -- i.e., the plate appearance ends at the pitch that produced $s$, with no
further pitcher decision. ($Z[s]$ is defined analogously for $s \in S_B$.)

\textit{A remark on the two players' value functions.} Throughout this section, $v$ (and,
in sums, $v'$) denotes the information set at which a value function or dual variable is
evaluated. On the other hand, $u(z)$, the utility function of Definition~\ref{def:utility}, will always carry an explicit argument $z \in Z$. The Pitcher's and Batter's recursions look
parallel but are \emph{not} literal copies of one another: the Pitcher maximizes his own
payoff $u_P(z) := -u(z)$, while the Batter maximizes $u(z)$ directly, because $u(z)$ is
defined batter-centrically (Definition~\ref{def:utility}) and the sequence-form payoff $g$
of Definition~\ref{def:seqform-payoff} is built from $u(z)$ with no sign attached. This
asymmetry is made precise in Lemma~\ref{lem:matrix-entry} and
Remark~\ref{rem:matrix-entry-batter} below.

\begin{lemma}[Pitcher's payoff matrix in terms of $g$]\label{lem:matrix-entry}
For $s \in S_P$, $(x^\top B)_s = \sum_{z \in Z[s]} \mathbb{P}_x(z)\, u_P(z)$, where $x$ is the
batter's realization plan, $B$ is the pitcher's payoff matrix (Appendix~\ref{vonStengelComputationOfEquilibria}),
$u_P(z) := -u(z)$ is the pitcher's terminal utility (Definition~\ref{def:utility}), and
$\mathbb{P}_x(z) := x(\sigma_B(z))\, r_N(\sigma_N(z))$.
\end{lemma}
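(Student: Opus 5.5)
The plan is to unfold the definition of the Pitcher's payoff matrix $B$ from Appendix~\ref{vonStengelComputationOfEquilibria} and compute the vector--matrix product directly. In von Stengel's formulation the three-player payoff $g$ of Definition~\ref{def:seqform-payoff} is collapsed by absorbing Nature's fixed realization plan $r_N$. The Batter's matrix has entries $A_{s_B,s_P}=\sum_{s_N\in S_N} g(s_P,s_B,s_N)\,r_N(s_N)$. The Pitcher's matrix is its negative transpose-convention counterpart, $B_{s_B,s_P}=-\sum_{s_N} g(s_P,s_B,s_N)\,r_N(s_N)$, reflecting that $g$ is built from the batter-centric $u$ (Definition~\ref{def:utility}), whereas the Pitcher maximizes $u_P=-u$. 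I would first fix this convention explicitly (rows indexed by $S_B$, columns by $S_P$), since the sign is the only place the Pitcher and Batter statements differ.

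Next, for fixed $s\in S_P$, write
\[
(x^\top B)_s=\sum_{s_B\in S_B}\sum_{s_N\in S_N} x(s_B)\,r_N(s_N)\,\bigl(-g(s,s_B,s_N)\bigr).
\]
By Definition~\ref{def:seqform-payoff}, $g(s,s_B,s_N)$ is nonzero only when the triple $(s,s_B,s_N)$ is defined by some leaf $z\in Z$, i.e.\ $(s,s_B,s_N)=(\sigma_P(z),\sigma_B(z),\sigma_N(z))$. The key combinatorial step is that the map $z\mapsto(\sigma_P(z),\sigma_B(z),\sigma_N(z))$ is injective on $Z$. Since the players move in the fixed cyclic order P$\to$B$\to$N, the interleaving of the three sequences recovers the full history, and hence the leaf. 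Injectivity lets the double sum over $(s_B,s_N)$ be reindexed as a single sum over the leaves $z$ with $\sigma_P(z)=s$, which is exactly $Z[s]$. Each such leaf contributes $x(\sigma_B(z))\,r_N(\sigma_N(z))\,(-u(z))=\mathbb{P}_x(z)\,u_P(z)$. Triples not arising from a leaf contribute zero, and leaves whose Pitcher sequence is a proper extension of $s$ are not counted, which matches the ``exactly equal'' convention in $Z[s]$.

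I expect the main obstacle to be bookkeeping rather than mathematics. Pinning down the sign and orientation of $B$ consistently with the appendix's LP conventions is the delicate part. Justifying injectivity of the leaf-to-sequence-triple map is also needed, so that no leaf is double-counted and distinct leaves are not merged. In the general perfect-recall setting one would argue injectivity via the tree's structure. Here the deterministic alternation of owners makes it immediate, and I would state it as a short observation before the reindexing.
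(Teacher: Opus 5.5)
Your proposal is correct and follows the paper's proof essentially step for step: write $B=-A$ with rows indexed by $S_B$, expand $(x^\top B)_s$ as a double sum over $(s_B,s_N)$, use that $g$ vanishes except on triples defined by a leaf, and regroup the surviving terms by the leaves $z$ with $\sigma_P(z)=s$. The only addition is that you justify injectivity of $z\mapsto(\sigma_P(z),\sigma_B(z),\sigma_N(z))$ through the fixed P$\to$B$\to$N alternation, which spells out what the paper asserts in a single line.
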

\begin{proof}
By Appendix~\ref{vonStengelComputationOfEquilibria}, $B = -A$, so $B_{s_B,s} = -A_{s_B,s} =
-\sum_{s_N \in S_N} r_N(s_N)\, g(s_B,s_N,s)$. Hence
$(x^\top B)_s = \sum_{s_B \in S_B} x(s_B)\, B_{s_B,s} = -\sum_{s_B \in S_B}\sum_{s_N \in S_N}
x(s_B)\, r_N(s_N)\, g(s_B,s_N,s)$. By Definition~\ref{def:seqform-payoff}, $g$ vanishes unless
$(s_B,s_N,s)$ is the triple defined by some leaf $z$, in which case $g = u(z)$, so $-g = u_P(z)$.
Grouping the nonzero terms by the leaf they come from gives the stated sum, since the leaf $z$
with $\sigma_P(z)=s$ is unique once $\sigma_B(z)$ and $\sigma_N(z)$ are also fixed.
\end{proof}

\begin{remark}[The parallel fact for the Batter]\label{rem:matrix-entry-batter}
The same computation, \emph{without} the sign flip -- because $A$ is used directly, not
through the $B=-A$ relation -- gives, for $s \in S_B$,
\[
  (Ay)_s = \sum_{z \in Z[s]} \mathbb{P}_y(z)\, u(z), \qquad \mathbb{P}_y(z) := y(\sigma_P(z))\, r_N(\sigma_N(z)).
\]
This is the plain batter utility $u(z)$ of Definition~\ref{def:utility}, with no sign
change: unlike the Pitcher's problem, the Batter maximizes $u(z)$ itself, since $A$ is her
own payoff matrix. This fact is used implicitly in Section~\ref{subsec:batter-dp} below.
\end{remark}

\subsection{Continuation Value Function and Conditional Probabilities}
\label{subsec:reach-weights}

The quantity $\mathbb{P}_x(z)$ of Lemma~\ref{lem:matrix-entry} is \emph{not} a probability
distribution over $Z$: it deliberately omits the pitcher's own realization weight, because
in the best-response problem the pitcher's sequence is the variable being optimized, not a
random random draw. This subsection makes precise, and proves well defined, the two notions the
Bellman recursion needs: the total weight carried into a Pitcher information set by the
Batter and Nature alone, and the corresponding conditional probabilities.

\begin{definition}[Reach weight]\label{def:reach-weight}
Fix a batter realization plan $x$ and Nature's realization plan $r_N$. For a pitcher
information set $v \in \mathcal U_P$ (a singleton, since the Pitcher has perfect
information, Section~\ref{subsec:information-set}), let $\sigma_B(v) \in S_B$ and $\sigma_N(v) \in S_N$ be the batter and Nature
sequences defined by the unique path from the root to $v$. Define
\[
  \mathbb{P}_x(A_v) \;:=\; x\bigl(\sigma_B(v)\bigr)\cdot r_N\bigl(\sigma_N(v)\bigr).
\]
\end{definition}

\noindent The event $A_v$ is simply $\{z \in Z : v \preceq z\}$, $\preceq$ is the natural order along paths of the game tree and so  $A_v$ are the leaves reachable
through $v$.  Lemma~\ref{lem:conservation} below shows $\mathbb{P}_x(A_v)$ is exactly the
total $\mathbb{P}_x$-weight of $A_v$, so the notation is consistent with reading
$\mathbb{P}_x(A_v)$ as "the probability, under $x$ and $r_N$ alone, of reaching $v$."

\begin{lemma}[Conservation]\label{lem:conservation}
For every $v \in \mathcal U_P$ and every $a \in A(v)$,
\begin{equation}\label{eq:conservation}
  \mathbb{P}_x(A_v) \;=\; \sum_{z \in Z[\sigma_v a]} \mathbb{P}_x(z) \;+\; \sum_{v' \in \mathcal U_P:\, \sigma_{v'} = \sigma_v a} \mathbb{P}_x(A_{v'}).
\end{equation}
In particular (taking $v$ to be the root and telescoping), $\mathbb{P}_x(A_v) = \sum_{z \in A_v} \mathbb{P}_x(z)$.
\end{lemma}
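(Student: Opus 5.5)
The plan is to verify \eqref{eq:conservation} by following a single pitch cycle below the Pitcher node, and then to obtain the ``in particular'' clause by backward induction on the depth of the finite stopped tree. One convention is needed first. As in the Running Example, where the sequences $FF_2$ and $FF_3$ carry the label of the information set at which the action is taken, I read Pitcher sequences as recording the information set at each of their actions. With this reading, $\sigma_v a$ identifies $v$ uniquely. Hence $Z[\sigma_v a]\subseteq A_v$, and every $v'$ with $\sigma_{v'}=\sigma_v a$ lies below $v$. Without this labeling the leaves on the right-hand side could come from other subtrees, and the identity would fail.

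\textbf{Step 1: the one-cycle identity.} Let $v=\{h\}$ and let $h'$ be the child of $h$ reached by $a$. Then $h'$ lies in the Batter information set $U_B^h$. Since the Batter's sequence at $h'$ equals $\sigma_B(v)$ (the Pitcher's move does not extend it), the flow constraint \eqref{realizationPlanConstraints2} at $U_B^h$ gives
\[
x(\sigma_B(v))=\sum_{b\in A_B} x(\sigma_B(v)\,b).
\]
Similarly, Nature's plan is induced by $\beta_N$, whose probabilities sum to one at each Nature node. This gives
\[
r_N(\sigma_N(v))=\sum_{n\in A_N} r_N(\sigma_N(v)\,n).
\]
Multiplying the two identities yields
\[
\mathbb P_x(A_v)=\sum_{b,n} x(\sigma_B(v)b)\,r_N(\sigma_N(v)n).
\]

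\textbf{Step 2: classify the grandchildren.} For each pair $(b,n)$, the node reached from $h'$ is, by Definition~\ref{stoppingTime}, either a leaf $z$ or a new Pitcher node $v'$. In both cases the Pitcher sequence is exactly $\sigma_v a$, the Batter sequence is $\sigma_B(v)b$, and the Nature sequence is $\sigma_N(v)n$. If the node is a leaf, the corresponding term equals $\mathbb P_x(z)$ by Lemma~\ref{lem:matrix-entry}, and $z\in Z[\sigma_v a]$. If it is a Pitcher node, the term equals $\mathbb P_x(A_{v'})$ by Definition~\ref{def:reach-weight}, and $\sigma_{v'}=\sigma_v a$. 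Conversely, every element of $Z[\sigma_v a]$ and every such $v'$ arises from exactly one pair $(b,n)$. This is because the Pitcher makes no intermediate move, so anything with Pitcher sequence $\sigma_v a$ (with labels) must be reached within this same cycle. Summing over the pairs therefore gives \eqref{eq:conservation}.

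\textbf{Step 3: telescoping.} Iterate \eqref{eq:conservation}, substituting each $\mathbb P_x(A_{v'})$ by its own expansion, and terminate by finiteness of the stopped tree (at most six cycles). The point needing care, and the main obstacle I anticipate, is exactly what this telescoping produces. Because $\mathbb P_x(z)$ omits the Pitcher's weight, summing over \emph{all} $z\in A_v$ would count every action $a'\in A(v)$, and the total would be roughly $|A(v)|$ times too large. The correct statement is therefore this. Fix one action at $v$ and, recursively, one action at each descendant Pitcher information set (that is, a pure continuation for the Pitcher). Then $\mathbb P_x(A_v)$ equals the sum of $\mathbb P_x(z)$ over the leaves of $A_v$ consistent with that continuation. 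More generally, it equals $\sum_{z\in A_v}\mathbb P_x(z)\,y(\sigma_P(z))/y(\sigma_v)$ for any Pitcher realization plan $y$ with $y(\sigma_v)>0$, which follows by weighting \eqref{eq:conservation} with $y(\sigma_v a)$ and applying \eqref{realizationPlanConstraints2}. I would prove this version by induction on depth and state that this is the sense in which the lemma's last sentence holds.
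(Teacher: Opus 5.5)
Your proof of \eqref{eq:conservation} follows the paper's argument, made explicit. Both push $x(\sigma_B(v))\,r_N(\sigma_N(v))$ through the Batter's flow constraint after $(v,a)$ and through Nature's normalization at each following Nature node. There is one such node per Batter action $b$, so your Nature identity should be read node by node. Both then sort the resulting nodes into leaves of $Z[\sigma_v a]$ and sets $v'$ with $\sigma_{v'}=\sigma_v a$. Your labeling convention states outright what the paper uses only tacitly through the subscripts in $FF_2, FF_3$. Without those subscripts, $\sigma_{v_2}F=\sigma_{v_3}F$, and the identity at $v_2$ picks up an extra $\mathbb{P}_x(A_{v_3})$.

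You depart from the paper on the final clause, and you are right to. The paper obtains $\mathbb{P}_x(A_v)=\sum_{z\in A_v}\mathbb{P}_x(z)$ by ``unfolding recursively''. But \eqref{eq:conservation} holds for each $a$ separately and cannot be summed over $a$ without Pitcher weights. If no Pitcher information set lies below $v$, it gives $\sum_{z\in A_v}\mathbb{P}_x(z)=|A(v)|\,\mathbb{P}_x(A_v)$. In general the sum is at least $|A(v)|\,\mathbb{P}_x(A_v)$. So the clause is false whenever $\mathbb{P}_x(A_v)>0$ and $|A(v)|\ge 2$, and the last clause of Lemma~\ref{lem:conservation-batter} has the same defect.

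Your weighted identity is the correct replacement, and your induction proves it. The cleanest form is $y(\sigma_v)\,\mathbb{P}_x(A_v)=\sum_{z\in A_v}\mathbb{P}_x(z)\,y(\sigma_P(z))$. To get it, multiply \eqref{eq:conservation} by $y(\sigma_v a)$, sum over $a$ using the flow constraint for $y$, and apply the hypothesis at each $v'$. This form needs no separate treatment of descendants with $y(\sigma_{v'})=0$. Your pure-continuation version is the case of a deterministic $y$.

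Theorem~\ref{optimalPitcherAtu} is unaffected, since \eqref{eq:bellman-pitcher} uses only the per-action weights $\mathbb{P}_x(\sigma_v a\mid v)$ and $\mathbb{P}_x(A_{v'}\mid v)$. What fails is only the paper's reading of $\mathbb{P}_x(\cdot\mid v)$ as a probability distribution on $A_v$.
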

\begin{proof}
Fix $a \in A(v)$. Every continuation of play after the pitcher plays $a$ at $v$ passes
through exactly one batter information set and then one Nature move, after which either a
leaf is reached (contributing to the first sum) or a new pitcher information set $v'$ with
$\sigma_{v'} = \sigma_v a$ is reached (contributing to the second). Both the batter's and
Nature's realization plans satisfy the flow constraint~\eqref{realizationPlanConstraints2}:
the weight $r_i(\sigma)$ assigned to a sequence equals the sum of the weights assigned to
all one-step extensions of $\sigma$. Applying this constraint repeatedly, layer by layer,
from $\sigma_B(v)\cdot r_N(\text{node }v)$ down to the leaves and next-pitcher-nodes below
$(v,a)$ telescopes exactly to the right-hand side of~\eqref{eq:conservation}; the left-hand
side does not depend on $a$ because $a$ is a pitcher action and affects neither
$\sigma_B(v)$ nor $\sigma_N(v)$. The displayed special case follows by taking $v$ to be the
root, where $\mathbb{P}_x(A_{\mathrm{root}}) = x(\varnothing) r_N(\varnothing) = 1$, and
unfolding~\eqref{eq:conservation} recursively over all descendants.
\end{proof}

For $v$ with $\mathbb{P}_x(A_v) > 0$ we define conditional probabilities by simple
normalization,
\[
  \mathbb{P}_x(z \mid v) := \frac{\mathbb{P}_x(z)}{\mathbb{P}_x(A_v)}\ \ (z \in A_v), \qquad
  \mathbb{P}_x(A_{v'} \mid v) := \frac{\mathbb{P}_x(A_{v'})}{\mathbb{P}_x(A_v)}\ \ (v' \succeq v).
\]
No separate notion of "conditioning on the action $a$" is needed: once $v$ and $a$ are
fixed, $Z[\sigma_v a]$ and $\{v' : \sigma_{v'} = \sigma_v a\}$ are simply the (disjoint) subsets
of $A_v$ compatible with that choice, and~\eqref{eq:conservation} already accounts for all
of $\mathbb{P}_x(A_v)$ being split between them. It is also convenient to condition on the
smaller event $Z[\sigma_va] \subseteq A_v$ itself, writing, for $\mathbb{P}_x(\sigma_va\mid v)
:= \sum_{z\in Z[\sigma_va]}\mathbb{P}_x(z)/\mathbb{P}_x(A_v) > 0$,
\[
  \mathbb{P}_x(z \mid \sigma_v a) := \frac{\mathbb{P}_x(z)}{\sum_{z'\in Z[\sigma_va]} \mathbb{P}_x(z')}\ \ (z \in Z[\sigma_va]);
\]
this is the same normalization used below in $\bar g_P(v,a)$, restricted to the leaves
reachable through the specific extension $\sigma_va$ rather than all of $A_v$.

\begin{definition}[Pitcher's normalized value function]\label{def:pitcher-V}
Define $V : \mathcal U_P \to \mathbb{R}$, for every $v$ with $\mathbb{P}_x(A_v) > 0$, by the
backward recursion (well founded, since the tree is finite,
Section~\ref{subsec:history_set})
\begin{equation}\label{eq:bellman-pitcher}
  V(v) \;=\; \max_{a \in A(v)}\ \Bigl\{\, \mathbb{P}_x(\sigma_v a \mid v)\, \bar g_P(v,a) \;+\; \sum_{v':\,\sigma_{v'}=\sigma_va} \mathbb{P}_x(A_{v'} \mid v)\, V(v') \Bigr\},
\end{equation}
where $\bar g_P(v,a) := \sum_{z\in Z[\sigma_va]} \mathbb{P}_x(z\mid \sigma_v a)\, u_P(z)$ is the
conditional expected \emph{pitcher} payoff of ending the plate appearance on this pitch --
equivalently, minus the conditional expectation, given the extension $\sigma_v a$, of the
sequence-form payoff $g$ of Definition~\ref{def:seqform-payoff}, which returns the batter's
terminal utility $u(z)$ of Definition~\ref{def:utility} at any leaf $z$; recall
$u_P(z):=-u(z)$. The sign is forced by Lemma~\ref{lem:matrix-entry}: $V$ is defined so that
$V(v)\cdot\mathbb{P}_x(A_v)$ matches the dual variable $q^\star(v)$ of the pitcher's
\emph{own} best-response LP (Theorem~\ref{optimalPitcherAtu} below), and that LP is stated
in terms of $B=-A$, not $A$. Also
$\mathbb{P}_x(\sigma_v a \mid v) := \sum_{z \in Z[\sigma_v a]} \mathbb{P}_x(z) / \mathbb{P}_x(A_v)$.
(Information sets unreachable under $x$, i.e.\ $\mathbb{P}_x(A_v)=0$, play no role in any
best response and are assigned an arbitrary value, as is standard. We subscript $\bar g_P$
by the player, and use $g$ rather than $u$ or $v$ in its name, so that three distinct
objects -- the batter's terminal utility $u(z)$, this section's information-set variable
$v$, and the player-specific conditional payoff being defined here -- never share a symbol.
Section~\ref{subsec:batter-dp} defines the parallel $\bar g_B$, built from the plain
$u(z)$ rather than $u_P(z)$, since the Batter needs no sign flip -- Remark~\ref{rem:matrix-entry-batter}.)
\end{definition}

\subsection{Bellman Equation and the Pitcher's Best Response}

Recall from Appendix~\ref{vonStengelComputationOfEquilibria} that a pitcher best response
to $x$ solves the primal-dual pair
\[
\max_{y \ge 0}\; (x^\top B) y \ \ \text{s.t.}\ \ Fy = f,
\qquad\qquad
\min_{q}\; f^\top q \ \ \text{s.t.}\ \ F^\top q \ge B^\top x,
\]
whose dual constraints, written information-set by information-set, read
\begin{equation}
\label{eq:p-dual-ineq}
q(v) \;\ge\; (x^\top B)_{\sigma_v a} \;+\; \sum_{v' \in \mathcal U_P :\, \sigma_{v'} = \sigma_v a} q(v'), \qquad \forall\, v \in \mathcal U_P,\ a \in A(v).
\end{equation}

\begin{theorem}\label{optimalPitcherAtu}
Let $q^\star$ be an optimal dual solution to the pitcher's best-response LP against a
fixed batter realization plan $x$. Then for every $v \in \mathcal U_P$ with $\mathbb{P}_x(A_v)>0$,
\[
  q^\star(v) \;=\; V(v)\cdot \mathbb{P}_x(A_v).
\]
\end{theorem}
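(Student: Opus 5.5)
The plan is to remove the normalization first and then identify the resulting unnormalized value with the dual. Define $W(v) := V(v)\,\mathbb{P}_x(A_v)$ for $v \in \mathcal U_P$ with $\mathbb{P}_x(A_v)>0$, and $W(v):=0$ on unreachable information sets. Multiplying the Bellman equation~\eqref{eq:bellman-pitcher} by $\mathbb{P}_x(A_v)$ cancels every conditional probability. The terminal term becomes $\sum_{z\in Z[\sigma_va]}\mathbb{P}_x(z)\,u_P(z)$, which equals $(x^\top B)_{\sigma_va}$ by Lemma~\ref{lem:matrix-entry}. The continuation terms become $\sum_{v':\sigma_{v'}=\sigma_va} W(v')$. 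Hence
\[
W(v) \;=\; \max_{a\in A(v)}\Bigl\{ (x^\top B)_{\sigma_va} + \sum_{v':\,\sigma_{v'}=\sigma_va} W(v')\Bigr\},
\]
which is the dual inequality~\eqref{eq:p-dual-ineq} holding with equality at a maximizing action. Unreachable children cause no trouble: by Lemma~\ref{lem:conservation} they carry zero weight, so both their terminal contributions and their $W$ vanish. Consequently $W$ is dual feasible.

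Next I would show that $W$ is the least dual-feasible vector. Take any feasible $q$ and argue by backward induction over the finite tree (Section~\ref{subsec:history_set}). At the deepest information sets there are no successors, so $q(v)\ge \max_a (x^\top B)_{\sigma_va}=W(v)$. Inductively, $q(v)\ge (x^\top B)_{\sigma_va}+\sum_{v'} q(v') \ge (x^\top B)_{\sigma_va}+\sum_{v'} W(v')$ for every $a$, so $q(v)\ge W(v)$. Since $f$ is the unit vector at the root row, $f^\top q = q(\mathrm{root})\ge W(\mathrm{root}) = f^\top W$. Therefore $W$ is dual optimal, and every optimal $q^\star$ satisfies $q^\star(\mathrm{root}) = W(\mathrm{root})$ and $q^\star\ge W$ componentwise. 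By strong duality, this common value also equals the best-response payoff.

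The main obstacle is upgrading equality at the root to equality at every reachable $v$, because dual optimality alone pins down only $q(\mathrm{root})$. I would first use complementary slackness with an optimal primal $y^\star$. Whenever $y^\star(\sigma_va)>0$, the constraint~\eqref{eq:p-dual-ineq} is tight. Telescoping tight constraints down from the root, together with $q^\star\ge W$ at every summand, forces $q^\star(v')=W(v')$ at every information set on the support of $y^\star$. Choosing $y^\star$ to maximize support among optimal primal solutions, for example a strictly complementary pair, extends this to as many information sets as possible.

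For reachable $v$ lying off every optimal play, the LP does not determine $q(v)$ uniquely: $q(v)$ can be raised as long as its ancestors keep slack. Hence the identity must be read for the canonical, componentwise-minimal optimal dual solution. The previous step shows this minimal solution exists and equals $W$. I would state that convention explicitly, or equivalently restrict the claim to information sets on the support of an optimal $y^\star$, and then conclude $q^\star(v)=V(v)\,\mathbb{P}_x(A_v)$.
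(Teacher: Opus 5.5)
Your proposal is correct, and on the key point it is more careful than the paper. The paper runs a backward induction directly on $q^\star$. Its base case asserts $q^\star(v)=\max_a (x^\top B)_{\sigma_va}$ ``by strong LP duality,'' and its inductive step takes an action $a^\star$ with $y^\star(\sigma_va^\star)>0$ at which the dual constraint is tight. Strong duality, however, fixes only $f^\top q^\star=q^\star(\varnothing)$. Moreover, if $y^\star(\sigma_v)=0$, then $y^\star(\sigma_va)=0$ for every $a$, so complementary slackness gives no tight constraint at $v$.

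Your diagnosis that the literal statement fails is therefore right. Suppose $a$ is strictly suboptimal at a reachable $v$, so the constraint at $(v,a)$ is slack for $W$. Raise $W(v')$, for a child $v'$ with $\sigma_{v'}=\sigma_va$, by less than that slack. Every constraint stays satisfied and $f^\top q$ is unchanged, yet $\mathbb{P}_x(A_{v'})$ can be positive. An instance is $v_4$ in the running example, whenever $O$ is strictly worse than $F$ at $v_1$ and $\mathbb{P}_x(A_{v_4})>0$.

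Read charitably, the paper's induction proves the identity only where $y^\star(\sigma_v)>0$, which is your restricted version. Even then, the lower-bound half of its inductive step substitutes $q^\star(v')=V(v')\,\mathbb{P}_x(A_{v'})$ below non-support actions, where only your inequality $q^\star\ge W$ is actually available.

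Your route differs: you build the unnormalized value $W$, show by backward induction that it lies below every dual-feasible vector, and then propagate equality top-down along the support. The minimality step makes $W$ dual optimal without invoking strong duality. The result is a sharper statement. $W$ is the componentwise-minimal optimal dual and equals $V\cdot\mathbb{P}_x$ at every reachable information set. An arbitrary optimal dual agrees with it wherever some optimal $y^\star$ has $y^\star(\sigma_v)>0$, and can only exceed it elsewhere.

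One step needs an extra sentence. The top-down propagation requires tightness for $W$ as well as for $q^\star$. A tight $q^\star$-constraint at $(v,a)$ together with $q^\star(v)=W(v)$ gives only $\sum_{v'}q^\star(v')=W(v)-(x^\top B)_{\sigma_va}\ge\sum_{v'}W(v')$, which is the wrong direction. You have shown $W$ is dual optimal, so complementary slackness also holds between $y^\star$ and $W$. Hence $d:=q^\star-W\ge 0$ satisfies $d(v)=\sum_{v':\sigma_{v'}=\sigma_va}d(v')$ whenever $y^\star(\sigma_va)>0$, and $d=0$ propagates down from the root.

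Also, $f^\top q=q(\varnothing)$ is the variable of the empty-sequence row, not of the root information set. Extend $W$ by $W(\varnothing):=(x^\top B)_\varnothing+\sum_{v:\sigma_v=\varnothing}W(v)$ so that the $\varnothing$-column constraint is covered. With these two additions your argument is complete.
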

\begin{proof}
By backward (structural) induction on the tree, from the leaves of $\mathcal U_P$ upward.

\smallskip\noindent\textbf{Base case: $v$ has no pitcher information set below it}, i.e.\
$\{v' : \sigma_{v'} = \sigma_v a\} = \varnothing$ for every $a \in A(v)$. Inequality~\eqref{eq:p-dual-ineq}
reads $q(v) \ge (x^\top B)_{\sigma_v a}$ for all $a\in A(v)$, so by strong LP duality
$q^\star(v) = \max_{a\in A(v)} (x^\top B)_{\sigma_va}$. Dividing by $\mathbb{P}_x(A_v) > 0$
(a constant that does not depend on $a$, Definition~\ref{def:reach-weight}) and using
Lemma~\ref{lem:matrix-entry},
\[
\frac{q^\star(v)}{\mathbb{P}_x(A_v)} = \max_{a\in A(v)} \frac{\sum_{z\in Z[\sigma_va]}\mathbb{P}_x(z)\,u_P(z)}{\mathbb{P}_x(A_v)}
= \max_{a\in A(v)} \mathbb{P}_x(\sigma_v a\mid v)\,\bar g_P(v,a),
\]
which is exactly~\eqref{eq:bellman-pitcher} with the second (continuation) term absent, so
$V(v) = q^\star(v)/\mathbb{P}_x(A_v)$.

\smallskip\noindent\textbf{Inductive step.} Suppose the claim holds for every pitcher
information set $v'$ with $\sigma_{v'} = \sigma_v a$ for some $a \in A(v)$ (all children of $v$
in the pitcher's decision tree). Dividing~\eqref{eq:p-dual-ineq} by $\mathbb{P}_x(A_v)$ and
substituting the induction hypothesis $q(v') = V(v')\,\mathbb{P}_x(A_{v'})$,
\[
\frac{q(v)}{\mathbb{P}_x(A_v)} \;\ge\; \frac{(x^\top B)_{\sigma_va}}{\mathbb{P}_x(A_v)} \;+\; \sum_{v':\,\sigma_{v'}=\sigma_va} \frac{\mathbb{P}_x(A_{v'})}{\mathbb{P}_x(A_v)}\,V(v'), \qquad \forall a\in A(v),
\]
i.e.\ $q(v)/\mathbb{P}_x(A_v) \ge$ the right-hand side of~\eqref{eq:bellman-pitcher} for every
$a$, hence $q(v)/\mathbb{P}_x(A_v) \ge V(v)$. For the reverse inequality, complementary
slackness gives an action $a^\star$ with $y^\star(\sigma_v a^\star) > 0$ at which
\eqref{eq:p-dual-ineq} holds with equality; running the same division and substitution at
$a^\star$ shows $q^\star(v)/\mathbb{P}_x(A_v)$ equals the $a^\star$ term of the maximum
in~\eqref{eq:bellman-pitcher}, which is $\le V(v)$. Hence $q^\star(v) = V(v)\,\mathbb{P}_x(A_v)$.
\end{proof}

\begin{remark}
The proof uses only Lemma~\ref{lem:conservation} and complementary slackness; there is no
step of the form "$\mathbb{P}(A_a)=1$ at the argmax," which was never a well-defined
statement. The pitcher's own action $a$ is not a random variable in this computation --
it is the decision variable of the very best-response problem being solved.
\end{remark}
\begin{remark}[Relation to counterfactual value]
The quantity $V(v)\,\mathbb{P}_x(A_v)$ in Theorem~\ref{optimalPitcherAtu} is
the same object as the \emph{counterfactual value} of information set $v$
against the fixed strategy pair $(x,r_N)$, as used in counterfactual regret
minimization~\cite{zinkevich2008cfr}: the expected payoff at $v$ weighted by
the reach probability of everyone other than the acting player. That
quantity is normally defined directly from a strategy profile and shown to
decompose recursively; Theorem~\ref{optimalPitcherAtu} instead derives it as
the dual variable of von Stengel's sequence-form best-response program, via
strong duality and complementary slackness. The two routes agree on the
value; the derivation given here is, as far as we know, new.
\end{remark}

\begin{remark}[Relation to von Stengel's original treatment]
That a sequence-form best response can be computed by backward induction is already
implicit in von Stengel~\cite{vonstengel_1996}, Section 6, stated there generically for
whichever player's best response is being computed. What Theorem~\ref{optimalPitcherAtu}
adds to that account is the explicit split $q^\star(v) = V(v)\cdot \mathbb{P}_x(A_v)$ of
the (unnormalized) dual variable into a reach probability and a per-unit-probability
continuation value, together with the proof, via Lemma~\ref{lem:conservation} and
complementary slackness, that this split is exact.
\end{remark}

\begin{runningex}[Continued] \label{ex:my_example-cont}
$v_1$
below denotes the root pitcher information set, drawn as $U^1_P$ in
Figure~\ref{fig:exact_game_tree}, and $v_2,\ldots,v_5$ the four Pitcher information sets one
level down -- each a singleton, since the Pitcher has perfect information, reached after one round of batter and Nature
moves.
With $A(v_1)=\{F,O\}$, Lemma~\ref{lem:conservation}
gives $\mathbb{P}_x(A_{v_1}) = \sum_{z\in Z[F]}\mathbb{P}_x(z) + \mathbb{P}_x(A_{v_2}) + \mathbb{P}_x(A_{v_3})$
for the choice $a=F$, and $\mathbb{P}_x(A_{v_1}) = \sum_{z\in Z[O]}\mathbb{P}_x(z) + \mathbb{P}_x(A_{v_4}) + \mathbb{P}_x(A_{v_5})$
for the choice $a=O$. Using Lemma~\ref{lem:matrix-entry},
$\sum_{z\in Z[F]}\mathbb{P}_x(z)u_P(z) = (x^\top B)_{s_1}
= x(S)\,r_N(H)\,u_P(z_1) + x(S)\,r_N(O)\,u_P(z_2) + x(T)\,r_N(H)\,u_P(z_3) + x(T)\,r_N(O)\,u_P(z_4)$,
matching the constraint-matrix row for $s_1=F$ in $F$ above (recall $u_P=-u$), and likewise
$\sum_{z\in Z[O]}\mathbb{P}_x(z)u_P(z) = (x^\top B)_{s_2}
= x(S)\,r_N(H)\,u_P(z_5) + x(S)\,r_N(O)\,u_P(z_6) + x(T)\,r_N(H)\,u_P(z_7) + x(T)\,r_N(O)\,u_P(z_8)$,
matching the row for $s_2=O$. Equation~\eqref{eq:bellman-pitcher} then reads
\begin{equation} \nonumber
V(v_1) = \max\Bigl\{\, \mathbb{P}_x(F\mid v_1)\,\bar g_P(v_1,F) + \mathbb{P}_x(A_{v_2}\mid v_1)V(v_2) + \mathbb{P}_x(A_{v_3}\mid v_1)V(v_3),
\end{equation}
\begin{equation} \nonumber
\mathbb{P}_x(O\mid v_1)\,\bar g_P(v_1,O) + \mathbb{P}_x(A_{v_4}\mid v_1)V(v_4) + \mathbb{P}_x(A_{v_5}\mid v_1)V(v_5) \Bigr\}.
\end{equation}
\end{runningex}

\subsection{Batter Best Response Dynamic Programming}
\label{subsec:batter-dp}

The batter's problem is structurally identical, with one difference: because the batter
does not observe the pitch type, a batter information set $v \in \mathcal U_B$ may contain
several nodes $h \in v$ (one for each pitcher sequence consistent with what the batter has
observed so far). The reach weight must therefore aggregate over all such $h$.

\begin{definition}[Batter reach weight]\label{def:batter-reach-weight}
Fix a pitcher realization plan $y$ and Nature's plan $r_N$. For a node $h$, let
$\sigma_P(h)$, $\sigma_N(h)$ be the pitcher and Nature sequences on the path to $h$, and set
$w_y(h) := y(\sigma_P(h))\,r_N(\sigma_N(h))$. For a batter information set $v \in \mathcal U_B$,
define
\[
  \mathbb{P}_y(A_v) \;:=\; \sum_{h \in v} w_y(h).
\]
\end{definition}

\begin{lemma}\label{lem:conservation-batter}
For every $v \in \mathcal U_B$ and $a \in A(v)$,
\[
  \mathbb{P}_y(A_v) = \sum_{z \in Z[\sigma_v a]} \mathbb{P}_y(z) + \sum_{v' \in \mathcal U_B:\,\sigma_{v'}=\sigma_va} \mathbb{P}_y(A_{v'}),
\]
where $\mathbb{P}_y(z) := y(\sigma_P(z))\,r_N(\sigma_N(z))$. In particular $\mathbb{P}_y(A_v) = \sum_{z \in \bigcup_{h\in v} A_h}\mathbb{P}_y(z)$.
\end{lemma}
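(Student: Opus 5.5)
The argument runs parallel to Lemma~\ref{lem:conservation}. The new ingredient is that a batter information set contains several nodes $h\in v$, so we first prove a nodewise identity and then sum over $h\in v$. Fix $v\in\mathcal U_B$ and $a\in A(v)$. For each node $h\in v$, let $h_a$ be the child of $h$ reached by the Batter playing $a$; it is a Nature node. Since the Batter's own action changes neither $\sigma_P$ nor $\sigma_N$, we have $w_y(h_a)=w_y(h)$, and the left-hand side $\mathbb{P}_y(A_v)=\sum_{h\in v}w_y(h)$ does not depend on $a$.

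\emph{Nodewise step.} Below $h_a$, Nature moves once with fixed $r_N$. Its flow constraint \eqref{realizationPlanConstraints2} gives $r_N(\sigma_N(h_a))=\sum_{b\in A_N}r_N(\sigma_N(h_a)b)$, so
\[
w_y(h)=\sum_{b} y(\sigma_P(h))\,r_N(\sigma_N(h_a)b).
\]
Each child $h_ab$ is either a leaf $z$, and then $\sigma_B(z)=\sigma_va$ so $z\in Z[\sigma_va]$ and the term equals $\mathbb{P}_y(z)$, or a Pitcher node $p$. In the second case we apply the Pitcher's flow constraint for $y$ at the singleton set $\{p\}$:
\[
y(\sigma_P(p))=\sum_{c\in A_P}y(\sigma_P(p)c).
\]
This splits $w_y(p)$ into $\sum_c w_y(pc)$, where the nodes $pc$ (the children of $p$) form exactly the Batter information set $U_B^p=:v'$. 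By perfect recall (Proposition~\ref{prop:infoset-sequences}) this set satisfies $\sigma_{v'}=\sigma_va$, so $w_y(p)=\mathbb{P}_y(A_{v'})$.

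\emph{Summation step.} Sum over $h\in v$ and $b$. We must check that each leaf in $Z[\sigma_va]$ and each $v'$ with $\sigma_{v'}=\sigma_va$ is counted exactly once. The map $(h,b)\mapsto h_ab$ is injective, and $v'\mapsto p$ is a bijection because $v'=U_B^p$. Surjectivity is the point needing care. Take any leaf $z\in Z[\sigma_va]$, or any $v'$ with $\sigma_{v'}=\sigma_va$. Its path contains a last Batter move, and that move is $a$ taken at some Batter node $h'$ with $\sigma_B(h')=\sigma_v$. We must show $h'\in v$. This does not follow from equal Batter sequences alone in a general game. In our tree, however, Batter information sets are indexed by the Pitcher node preceding them, and the full history above that Pitcher node is determined by the Batter's observations. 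Definition~\ref{def:perf-recall}(2), together with the structure $\mathcal U_B=\{U_B^x\}$, therefore gives that distinct Batter information sets with the same $\sigma_v$ lead to distinct extensions. I expect this identification of $Z[\sigma_va]$ with leaves below $\{h_a:h\in v\}$ to be the main obstacle.

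If that identification fails literally, because two different Batter information sets share the same sequence, I would reinterpret $Z[\sigma_va]$ and $\{v':\sigma_{v'}=\sigma_va\}$ as restricted to descendants of $v$. I would state this convention explicitly, in the same way Lemma~\ref{lem:conservation} implicitly uses it.

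\emph{Telescoping corollary.} The "in particular" statement follows by unfolding the identity recursively. Each $\mathbb{P}_y(A_{v'})$ is replaced by its own leaf terms and deeper information sets. Finiteness of the tree (Section~\ref{subsec:history_set}) ensures that only leaves in $\bigcup_{h\in v}A_h$ remain, each appearing once. One subtlety arises here: the displayed identity holds for each fixed $a$, so the telescoping should sum over all $a$. Since $\mathbb{P}_y$ carries no Batter weight, the total over $\bigcup_h A_h$ is $\sum_a(\ldots)$. I would check this against the example, where the set of leaves under $v$ splits by the Batter's action. The correct reading of the corollary is then "for each $a$, the leaves under $\{h_a\}$ carry total weight $\mathbb{P}_y(A_v)$," matching the convention of Lemma~\ref{lem:conservation}.
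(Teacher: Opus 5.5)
Your proof of the displayed identity is the paper's own argument, and it is correct. You apply Nature's flow constraint at each $h_a$, then the Pitcher's flow constraint at the next singleton $\{p\}$ to obtain $\mathbb{P}_y(A_{U_B^p})$, and then sum over $h\in v$.

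The step you call the main obstacle is not settled by perfect recall. In the running example, $U_B^2$ and $U_B^4$ are distinct information sets with the same Batter sequence $S$. Perfect recall only says that nodes inside one set share a sequence; it does not separate the extensions of different sets. What settles it is von Stengel's convention, visible in the subscripts of the paper's labels $SS_2,\dots,TT_5$, that each action is tagged by the information set at which it is taken. Then $\sigma_va$ determines $v$. So the last Batter move of any $z\in Z[\sigma_va]$, or above any $v'$ with $\sigma_{v'}=\sigma_va$, is made at a node of $v$. The P--B--N alternation then forces that object to be $h_ab$ or $U_B^{h_ab}$ for some $h\in v$, $b\in A_N$. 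Your fallback of restricting to descendants of $v$ amounts to the same convention. Without one of them the identity is false: at $v=U_B^2$ the right side would also collect $\mathbb{P}_y(A_{U_B^4})$.

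The genuine gap is in your last paragraph. You are right that the ``in particular'' clause is false as written. Because $\mathbb{P}_y$ carries no Batter weight, summing it over $\bigcup_{h\in v}A_h$ takes every Batter branch. Induction from the bottom shows this sum is at least $|A(v)|\,\mathbb{P}_y(A_v)$; at $v=U_B^1$ in the running example the two sides are $1$ versus at least $2$.

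However, your claim that unfolding leaves each leaf ``appearing once'' fails, and so does your repaired reading that, for each $a$, the leaves below $\{h_a:h\in v\}$ carry total weight $\mathbb{P}_y(A_v)$. Both fail for the same reason one level down. Unfolding $\mathbb{P}_y(A_{v'})$ by the identity at $v'$ again fixes a single action at $v'$, whereas ``all leaves below $h_a$'' includes every action at $v'$. In the running example with $v=U_B^1$ and $a=S$, the leaves below $\{h_S:h\in U_B^1\}$ carry
\[
\mathbb{P}_y(A_{U_B^1})+\mathbb{P}_y(A_{U_B^2})+\mathbb{P}_y(A_{U_B^4}),
\]
which exceeds $1$ as soon as a Strike after a first-pitch Swing has positive weight.

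The correct telescoped statement fixes one Batter action at every Batter information set at or below $v$ (a pure continuation) and sums $\mathbb{P}_y$ only over the leaves consistent with it. Equivalently, for every Batter realization plan $x$,
\[
x(\sigma_v)\,\mathbb{P}_y(A_v)=\sum_{z\in\bigcup_{h\in v}A_h}x\bigl(\sigma_B(z)\bigr)\,\mathbb{P}_y(z).
\]
This follows by multiplying the displayed identity by $x(\sigma_va)$, applying induction to the $v'$ terms, and summing over $a$ using the Batter's flow constraint at $v$. The paper's proof does not address this clause at all, and the corresponding clause of Lemma~\ref{lem:conservation} has the same defect.
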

\begin{proof}
Apply Lemma~\ref{lem:conservation}'s argument separately to each $h \in v$ (using the
pitcher's and Nature's flow constraints below $h$) and sum over $h \in v$; the batter's own
action $a$ does not affect $w_y(h)$ for $h \in v$, so, exactly as in the pitcher's case, the
left-hand side is common to all $a \in A(v)$.
\end{proof}

Define $V(v) := p(v)/\mathbb{P}_y(A_v)$, by the backward recursion
\begin{equation}\label{eq:bellman-batter}
  V(v) = \max_{a\in A(v)} \Bigl\{\, \mathbb{P}_y(\sigma_va\mid v)\,\bar g_B(v,a) + \sum_{v':\,\sigma_{v'}=\sigma_va} \mathbb{P}_y(A_{v'}\mid v)\,V(v') \Bigr\},
\end{equation}
where $\bar g_B(v,a) := \sum_{z\in Z[\sigma_va]} \mathbb{P}_y(z\mid \sigma_v a)\, u(z)$ is the
conditional expected \emph{batter} payoff of ending the plate appearance on this pitch, and
$\mathbb{P}_y(\sigma_va\mid v)$ is defined exactly as $\mathbb{P}_x(\sigma_va\mid v)$ in
Definition~\ref{def:pitcher-V}, with $x,\mathbb{P}_x$ replaced by $y,\mathbb{P}_y$.
Unlike the pitcher's $\bar g_P$, no sign flip is needed: $\bar g_B$ is built from the plain
utility $u(z)$, not $u_P(z)$, since $A$ is the batter's own payoff matrix
(Remark~\ref{rem:matrix-entry-batter}) and the batter's LP below is a direct maximization
of $x^\top(Ay)$, not of a negated matrix. Recall from Appendix~\ref{vonStengelComputationOfEquilibria}
that the batter's best-response dual constraints are
\begin{equation}
\label{eq:batter-lp}
p(v) \;\ge\; (Ay)_{\sigma_va} \;+\; \sum_{v': \sigma_{v'} = \sigma_va} p(v'), \qquad \forall v \in \mathcal U_B,\ a\in A(v).
\end{equation}

\begin{theorem}[Batter BR DP equals LP dual]\label{thm:batter-dp=dual}
Let $p^\star$ be an optimal dual solution of the batter's best-response LP against a fixed
pitcher realization plan $y$. Then for every $v \in \mathcal U_B$ with $\mathbb{P}_y(A_v)>0$,
\[
  p^\star(v) \;=\; V(v)\cdot \mathbb{P}_y(A_v).
\]
\end{theorem}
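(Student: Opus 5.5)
The plan is to mirror the proof of Theorem~\ref{optimalPitcherAtu}, replacing Lemma~\ref{lem:matrix-entry} by Remark~\ref{rem:matrix-entry-batter} and Lemma~\ref{lem:conservation} by Lemma~\ref{lem:conservation-batter}. The induction runs backward over the Batter's decision structure: information sets $v\in\mathcal U_B$ are ordered by their sequences $\sigma_v$, and we proceed from those with no Batter information set below them upward. This is well founded because the tree is finite and, by Proposition~\ref{prop:batter-perfect-recall} and Proposition~\ref{prop:infoset-sequences}, each $v'$ with $\sigma_{v'}=\sigma_va$ lies strictly below $v$. Throughout, the key identity is that dividing $(Ay)_{\sigma_va}$ by $\mathbb{P}_y(A_v)$ gives
\[
\frac{(Ay)_{\sigma_va}}{\mathbb{P}_y(A_v)}=\mathbb{P}_y(\sigma_va\mid v)\,\bar g_B(v,a).
\]
This follows from Remark~\ref{rem:matrix-entry-batter} together with the definitions of $\mathbb{P}_y(z\mid\sigma_va)$ and $\mathbb{P}_y(\sigma_va\mid v)$. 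It is valid whenever $\sum_{z\in Z[\sigma_va]}\mathbb{P}_y(z)>0$; when this sum is zero, both sides are $0$ by convention.

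\textbf{Base case.} Suppose no $v'$ has $\sigma_{v'}=\sigma_va$ for any $a$. Then \eqref{eq:batter-lp} reads $p(v)\ge (Ay)_{\sigma_va}$ for all $a$. Feasibility gives $p^\star(v)\ge\max_a (Ay)_{\sigma_va}$. For equality, I will use complementary slackness with an optimal primal $x^\star$, which gives equality at any $a$ with $x^\star(\sigma_va)>0$.

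\textbf{Inductive step.} Substitute the induction hypothesis $p^\star(v')=V(v')\mathbb{P}_y(A_{v'})$ into \eqref{eq:batter-lp} and divide by $\mathbb{P}_y(A_v)$. This yields $p^\star(v)/\mathbb{P}_y(A_v)\ge V(v)$. The reverse inequality comes from an action $a^\star$ at which the constraint is tight.

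One technical point needs care: children $v'$ with $\mathbb{P}_y(A_{v'})=0$, where the hypothesis does not apply. By Lemma~\ref{lem:conservation-batter}, such $v'$ carry zero weight in the recursion, so I need $p^\star(v')$ to be harmless there. Since the hypothesis only constrains reached sets, I will either restrict attention to a dual solution with $p^\star(v')=0$ on unreached sets or observe that those terms do not matter. Alternatively, I can state the theorem for a suitably chosen optimal dual. I expect to argue that $(Ay)_{\sigma}=0$ for every sequence below an unreached set, so the minimal feasible choice there is $0$.

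\textbf{Main obstacle.} The main obstacle is justifying tightness at $v$: the existence of $a^\star$ with equality in \eqref{eq:batter-lp}. Complementary slackness gives tightness only where $x^\star(\sigma_va)>0$, and if $x^\star(\sigma_v)=0$, no such $a$ need exist. I would first argue via minimality: $p^\star$ minimizes $p(\varnothing)$, and the dual objective depends on interior $p(v)$ only through the chain of constraints. Decreasing $p(v)$ to $\max_a[\cdots]$ preserves feasibility, because $p(v)$ appears on the right-hand side of its parent's constraint with a positive coefficient. Hence an optimal dual can be taken tight everywhere. Where $x^\star(\sigma_v)>0$, the flow constraint forces some $x^\star(\sigma_va)>0$, and complementary slackness then gives tightness directly. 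I would present the argument for the canonical tight optimal dual and note that this coincides with any $p^\star$ on information sets lying on the support of an optimal $x^\star$.
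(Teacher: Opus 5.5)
Your strategy is the paper's. Its proof of this theorem is declared ``identical'' to that of Theorem~\ref{optimalPitcherAtu}, with Remark~\ref{rem:matrix-entry-batter} and Lemma~\ref{lem:conservation-batter} substituted: backward induction over $\mathcal U_B$, division of \eqref{eq:batter-lp} by $\mathbb{P}_y(A_v)$, substitution of the hypothesis on the children, and complementary slackness for the reverse inequality. You depart from it by noticing that this last step is not available everywhere, and you are right. The paper obtains the base case from ``$p^\star(v)=\max_a(Ay)_{\sigma_va}$ (strong duality)''. But strong duality only gives $e^\top p^\star=x^{\star\top}Ay$, which concerns the root component only. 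Complementary slackness yields a tight constraint at $v$ only when $x^\star(\sigma_v)>0$.

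The gap lies in the statement itself, so no proof can close it for an arbitrary optimal $p^\star$. Start from your tight dual $t$ and pick:
\begin{itemize}
\item an information set $w$ with $\mathbb{P}_y(A_w)>0$;
\item an action $a$ that does not attain the maximum in \eqref{eq:bellman-batter} at $w$ (e.g.\ Take at $v_1$ in the running example whenever Swing is strictly better there);
\item a child $v$ with $\sigma_v=\sigma_wa$ and $\mathbb{P}_y(A_v)>0$.
\end{itemize}
The $(w,a)$ constraint of \eqref{eq:batter-lp} then has slack $\delta$ equal to $\mathbb{P}_y(A_w)$ times the Bellman gap, so $\delta>0$. The variable $p(v)$ enters only its own constraints, as a lower bound, and the $(w,a)$ constraint, on the right with coefficient $+1$; it does not enter $e^\top p$. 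Hence raising $t(v)$ by $\delta$ gives another optimal dual, at which the claimed identity fails.

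You should therefore drop the hedge ``or observe that those terms do not matter'' and prove the corrected result, which your ingredients already give. The same correction applies to Theorem~\ref{optimalPitcherAtu}.
\begin{itemize}
\item[(i)] Every feasible $p$ dominates $t$ componentwise. This is the ``$\ge$'' half of your inductive step run with inequalities. Here $t(v)=V(v)\,\mathbb{P}_y(A_v)$ on reached sets and $t(v)=0$ on unreached ones, since $(Ay)_\sigma=0$ below them. This also handles unreached children, whose unweighted term $p(v')$ in \eqref{eq:batter-lp} is then simply $\ge 0$.
\item[(ii)] Consequently $t$ is feasible and minimizes $e^\top p$, so it is an optimal dual attaining the identity at every reached $v$.
\item[(iii)] For any optimal $p^\star$, the identity holds at every $v$ with $x^\star(\sigma_v)>0$ for some optimal $x^\star$. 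Argue by induction over such $v$. Complementary slackness at an $a$ with $x^\star(\sigma_va)>0$ makes \eqref{eq:batter-lp} tight. Every child $v'$ of $(v,a)$ again has $x^\star(\sigma_{v'})>0$. So tightness plus the hypothesis on the children gives $p^\star(v)\le t(v)$, and (i) gives the reverse.
\end{itemize}
The slack construction above shows that (iii) cannot be extended beyond that support.
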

\begin{proof}
Identical to the proof of Theorem~\ref{optimalPitcherAtu}, with Lemma~\ref{lem:conservation}
replaced by Lemma~\ref{lem:conservation-batter}, Remark~\ref{rem:matrix-entry-batter} playing
the role Lemma~\ref{lem:matrix-entry} played there (unsigned, since $A$ needs no sign flip),
and~\eqref{eq:p-dual-ineq} replaced
by~\eqref{eq:batter-lp}: the base case divides the equality
$p^\star(v)=\max_{a}(Ay)_{\sigma_va}$ (strong duality, no further batter decisions below
$v$) by $\mathbb{P}_y(A_v)$; the inductive step divides~\eqref{eq:batter-lp} by
$\mathbb{P}_y(A_v)$, substitutes the induction hypothesis on each child $v'$, and invokes
complementary slackness at the batter's optimal action to obtain equality. Aggregating over
$h \in v$ changes none of the algebra, since $\mathbb{P}_y(A_v)$ is a single constant not
depending on $a$ (Lemma~\ref{lem:conservation-batter}).
\end{proof}


%

\subsection{A Count-State Minimax Recursion and Strategic Leverage}
\label{subsec:count-state-equilibrium-dp}

The Bellman recursions developed above interpret the computation of a best
response after the opponent's realization plan has been fixed. In a general
two-person zero-sum extensive-form game with imperfect information, such a
single-player dynamic program does not replace the coupled sequence-form
linear programs required to compute a minimax equilibrium. The present
baseball model, however, has additional state-Markov structure. Under the
assumptions stated below, all histories sharing the same initial base--out
record and current count induce the same continuation game. This permits an
exact compression of the extensive game and a second dynamic program that
computes the minimax equilibrium itself; see
Proposition~\ref{prop:count-state-equilibrium-recursion}. The resulting
recursion requires only a sequence of local zero-sum matrix-game computations
and also produces the action values used to define conditional and
reach-weighted measures of strategic leverage.

\subsubsection{The count-state reduction}

Fix an initial base--out record $s^-\in\mathcal R$ and write
\[
  \mathcal C:=\{0,1,2,3\}\times\{0,1,2\}
\]
for the twelve non-terminal possible {\tt count} function values.  
The record $s^-$ is fixed
throughout the plate appearance and enters the terminal utility through
$u(\,\cdot\,;s^-)$.  The {\tt count} values $c=(b,s)\in\mathcal C$ (as per Definition \ref{count}) are the component of
the state of the game that evolve after each non-terminal Ball or Strike.  Thus, for the
present model, the relevant game context is the pair $(s^-,c)$: one solves the
count recursion separately for each initial base--out record under study.

We formulate the reduction slightly more generally than for our current model.  Let
\[
  \beta_N(n\mid s^-,c,a_P,a_B), \qquad n\in A_N,
\]
be Nature's conditional distribution.  We require only that it depends on the
past through the current public state $(s^-,c)$ and the current actions
$(a_P,a_B)$.  Our model for Nature's behavioral distribution,  see Section~\ref{movesByNature}, which
depends only on $(a_P,a_B)$, is a special case.  

For $n\in A_N$, write $\nu(n)$ for the plate-appearance outcome it names
when it ends the plate appearance: $\nu(n):=n$ for
$n\in\{\mathrm{Single},\mathrm{Double},\mathrm{Triple},\mathrm{HomeRun},\mathrm{Out}\}$,
$\nu(\mathrm{Ball}):=\mathrm{Walk}$, and $\nu(\mathrm{Strike}):=\mathrm{Strikeout}$.

For $c=(b,s)\in\mathcal C$, say $n\in A_N$ is \emph{terminal at $c$} if
$n\in\{\mathrm{Single},\mathrm{Double},\mathrm{Triple},\mathrm{HomeRun},\mathrm{Out}\}$,
or $n=\mathrm{Ball}$ and $b=3$, or $n=\mathrm{Strike}$ and $s=2$; and
\emph{continuing at $c$} otherwise, i.e.\ $n=\mathrm{Ball}$ with $b<3$, or
$n=\mathrm{Strike}$ with $s<2$. Define
\[
W_{s^-}(c,n):=
\begin{cases}
 u\big(\nu(n);s^-\big), & n\text{ terminal at }c,\\[1mm]
 V_{s^-}(b+1,s), & n=\mathrm{Ball}\text{ and }b<3,\\[1mm]
 V_{s^-}(b,s+1), & n=\mathrm{Strike}\text{ and }s<2.
\end{cases}
\]
The boundary condition of this recursion is entirely the top row: it is
fixed once and for all by the terminal utility function of
Definition~\ref{def:utility}, with no reference to $V_{s^-}$. The bottom two
rows are the only place $V_{s^-}$ enters, and only at a count with $b+s$
one larger than at $c$.

Since $b+s$ strictly increases under either non-terminal transition and is
bounded above by $5$ on $\mathcal C$, this is a well-founded backward
recursion on $b+s$, from $5$ down to $0$ -- as for the tree-indexed
recursion of Definition~\ref{def:pitcher-V}, there justified by the
finiteness of the tree (Section~\ref{subsec:history_set}), here by the
strict monotonicity of $b+s$. Its maximum, $b+s=5$, is attained only at the
full count $c=(3,2)$, where $b=3$ and $s=2$ hold simultaneously, so every
$n\in A_N$ is terminal at $(3,2)$: $W_{s^-}((3,2),n)=u(\nu(n);s^-)$ for
every $n$, with no $V_{s^-}$ appearing at all. This is the base case of the
induction; every other count is resolved from values already fixed at
$b+s+1$.
At {\tt count} $c$ define the matrix entries $M_{s^-}(c)$ by:
\begin{equation}
\label{eq:count-state-matrix}
 M_{s^-}(c)[a_P,a_B]
 :=\sum_{n\in A_N}
 \beta_N(n\mid s^-,c,a_P,a_B)\,W_{s^-}(c,n).
\end{equation}
it gives a matrix of size  $|A_P|\times |A_B|$ matrix, expressed in
Batter payoff units.

Although the extensive tree records the Pitcher move before the Batter move,
the Batter information set joins the nodes generated by the different pitch
choices.  Consequently, the strategic interaction at a fixed count is the
zero-sum matrix game $M_{s^-}(c)$.

\begin{proposition}[Count-state equilibrium recursion]
\label{prop:count-state-equilibrium-recursion}
Suppose that Nature's transition kernel depends on the preceding history only
through $(s^-,c,a_P,a_B)$ and that terminal utility depends only on $s^-$ and
the terminal plate-appearance outcome.  Then all histories having the same
pair $(s^-,c)$ have the same continuation value.  This value is determined by
\begin{equation}
\label{eq:count-state-minimax-dp}
 V_{s^-}(c)
 =\min_{\pi_P\in\Delta(A_P)}
   \max_{\pi_B\in\Delta(A_B)}
   \pi_P^{\mathsf T}M_{s^-}(c)\pi_B
 =\max_{\pi_B\in\Delta(A_B)}
   \min_{\pi_P\in\Delta(A_P)}
   \pi_P^{\mathsf T}M_{s^-}(c)\pi_B,
\end{equation}
where the successor values appearing in~\eqref{eq:count-state-matrix} are
computed backward from the terminal boundaries.  Selecting a saddle point of
$M_{s^-}(c)$ at each count produces a minimax behavioral strategy for the full
plate appearance.
\end{proposition}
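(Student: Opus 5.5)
We argue by backward induction on $b+s$, from $5$ down to $0$, as the recursion for $W_{s^-}$ already indicates. For a history $h$ at a Pitcher node whose count is $c$, let $\Gamma(h)$ denote the subgame rooted at $h$. The first step is to show that $\Gamma(h)$ is a well-defined subgame, i.e.\ that no information set straddles its boundary. This holds because Pitcher information sets are singletons, and each Batter information set $U_B^x$ consists of the children of a single Pitcher node $x$ (Section~\ref{subsec:information-set}). Consequently every information set below $h$ lies entirely inside $\Gamma(h)$.

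Next we show that the continuation game depends on $h$ only through $(s^-,c)$. Two histories $h,h'$ with the same count have isomorphic subgames $\Gamma(h)\cong\Gamma(h')$. Indeed, the stopping rule of Definition~\ref{stoppingTime} depends on the past only through ${\tt count}$. Nature's kernel depends only on $(s^-,c,a_P,a_B)$ by hypothesis. Terminal utilities depend only on $s^-$ and the outcome $\nu(n)$. Since $\Gamma(h)$ is a finite two-person zero-sum game with perfect recall (Proposition~\ref{prop:batter-perfect-recall}), it has a value by the minimax theorem (Appendix~\ref{linearProgramFormulation}). Kuhn's theorem then lets us work with behavioral strategies. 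We denote the common value by $V_{s^-}(c)$.

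The inductive step is the decomposition of $\Gamma(h)$ into its first pitch cycle followed by the successor subgames. Fix behavioral strategies, and let $\pi_P,\pi_B$ be their first-cycle mixtures. The expected payoff then equals
\[
\sum_{a_P,a_B}\pi_P(a_P)\pi_B(a_B)\sum_n\beta_N(n\mid s^-,c,a_P,a_B)\,\Phi(n),
\]
where $\Phi(n)$ is the terminal utility if $n$ is terminal at $c$, and otherwise the payoff in the successor subgame at count $(b+1,s)$ or $(b,s+1)$. The successor subgames have disjoint information sets, so a player can optimize in each of them independently. By the induction hypothesis each successor has value $V_{s^-}$ at the successor count, and a saddle-point strategy there guarantees that value against any opponent play.

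We then prove both inequalities. For the lower bound, the Batter plays a maximin $\pi_B^\ast$ of $M_{s^-}(c)$ in the first cycle and the inductively chosen optimal strategies thereafter. Against any Pitcher strategy, each continuing branch yields at least $V_{s^-}$ of its successor, so the payoff is at least $\min_{\pi_P}\pi_P^{\mathsf T}M_{s^-}(c)\pi_B^\ast$, which is the value of the matrix game. The upper bound is symmetric, with the Pitcher playing a minimax $\pi_P^\ast$. The two guarantees coincide by von Neumann's theorem for the finite matrix game $M_{s^-}(c)$. This yields \eqref{eq:count-state-minimax-dp}, and also shows that the concatenation of local saddle points is a minimax behavioral strategy of the full game. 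That concatenation is well defined precisely because every Batter and Pitcher information set sits at a single count.

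The main obstacle is the reduction of the first cycle to a simultaneous matrix game. Although the Pitcher moves first, the Batter acts at $U_B^x$ without observing $a_P$, so his first-cycle choice is a single distribution $\pi_B$ independent of $a_P$. We must check that this, together with perfect recall, legitimizes the product form $\pi_P(a_P)\pi_B(a_B)$. In later cycles, by contrast, the Batter conditions on past observables, and the argument must show that this extra conditioning cannot improve on the subgame values. That follows from the subgame decomposition established in the first step.

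The base case $c=(3,2)$ is immediate: every $n$ is terminal there, so $M_{s^-}(3,2)$ involves only terminal utilities and $\Gamma(h)$ is exactly the matrix game.
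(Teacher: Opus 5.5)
Your proposal is correct and takes the paper's route: backward induction over the counts with base case $(3,2)$, successor values substituted into the local matrix game $M_{s^-}(c)$, the minimax theorem at each count, and concatenation of local saddle points into a behavioral strategy. Your check that $\Gamma(h)$ is a genuine subgame and your two-sided guarantee argument make rigorous steps the paper's terse proof leaves implicit; one small fix is that the continuation payoff $\Phi$ should be indexed by $(a_P,a_B,n)$, not by $n$ alone.
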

\begin{proof}
Order the counts backward by the number of additional Balls and Strikes that
can occur before termination.  At a count for which every possible Nature
outcome terminates the plate appearance, the continuation game is exactly the
finite matrix game in~\eqref{eq:count-state-matrix}; hence its value is given
by~\eqref{eq:count-state-minimax-dp}.  Suppose now that the assertion has been
established at every count reachable from $c$ after one non-terminal Ball or
Strike.  Conditional on the current action pair, Nature either terminates the
plate appearance or moves to one of those successor counts.  By the induction
hypothesis, the value at a successor count is independent of the particular
history leading to it.  Thus every history with current state $(s^-,c)$ induces
the same matrix $M_{s^-}(c)$.  The minimax theorem gives its value and a local
saddle point.  Continuing backward to $(0,0)$ proves the value statement.
Concatenating the local saddle-point strategies over the count states gives
behavioral strategies that guarantee this value from every reachable state,
which proves the final assertion.
\end{proof}

Figure~\ref{fig:count-state-dp} displays the state compression established by
Proposition~\ref{prop:count-state-equilibrium-recursion}. For a fixed initial
record $s^-$, each node represents one continuation game, regardless of how
many extensive-form histories lead to that count. Ball and Strike outcomes
move the game through this state space until a terminal boundary is reached,
whereas every in-play outcome terminates the plate appearance immediately.
\begin{figure}[H]
  \centering
\begin{tikzpicture}[
  x=1.55cm,y=1.25cm,
  state/.style={circle,draw=black!55,fill=black!3,minimum size=7.5mm,
                inner sep=0pt,font=\footnotesize},
  boundary/.style={rounded corners=2pt,draw=black!45,fill=black!5,
                   minimum width=17mm,minimum height=6mm,font=\footnotesize},
  arr/.style={-{Stealth[length=2mm]},draw=black!45,line width=.45pt},
  ball/.style={font=\scriptsize,above,sloped,text=black!70},
  strike/.style={font=\scriptsize,below,sloped,text=black!70}
]
  \foreach \b in {0,1,2,3}{
    \foreach \s in {0,1,2}{
      \node[state] (c\b\s) at (\b,-\s) {$\b$--$\s$};
    }
  }

  \node[boundary] (walk) at (4.35,-1) {Walk};
  \node[boundary] (so) at (1.5,-3.25) {Strikeout};

  \foreach \b in {0,1,2}{
    \foreach \s in {0,1,2}{
      \pgfmathtruncatemacro{\bp}{\b+1}
      \draw[arr] (c\b\s) -- (c\bp\s);
    }
  }
  \foreach \s in {0,1,2}{
    \draw[arr] (c3\s) -- (walk);
  }

  \foreach \b in {0,1,2,3}{
    \foreach \s in {0,1}{
      \pgfmathtruncatemacro{\sp}{\s+1}
      \draw[arr] (c\b\s) -- (c\b\sp);
    }
    \draw[arr] (c\b2) -- (so);
  }

  \node[font=\small] at (1.5,0.75) {Ball increases the first coordinate};
  \node[font=\small,rotate=90] at (-0.65,-1) {Strike increases the second coordinate};
  \node[font=\footnotesize,align=center,text=black!70] at (5.0,-2.55)
    {Every in-play outcome\\terminates immediately};
\end{tikzpicture}
  \caption{The figure shows the count-state compression obtained under the
standard four-Ball, three-Strike specification. For a fixed initial record
$s^-$, all extensive-form histories reaching the same count share the same
continuation game. Non-terminal Ball and Strike outcomes move play among the
twelve count states, while a fourth Ball, a third Strike, or an in-play outcome
terminates the plate appearance. The minimax values are therefore computed
backward over this count-state space.}
  \label{fig:count-state-dp}
\end{figure}

Therefore, computationally, for each fixed initial record
$s^-$, the standard four-Ball, three-Strike model requires solving one local
zero-sum matrix game at each of the twelve non-terminal counts. With the action
sets specified in Section~\ref{sec:game-theory-model}, these are twelve
$8\times 2$ matrix games.

This conclusion is special to the present state-Markov model and is not a
general replacement for von Stengel's sequence form.  If Nature's distribution
or either player's feasible strategy depends on pitch sequencing, fatigue,
learning, or another feature of the preceding history that is not included in
the state, two histories with the same count need not have the same continuation
game.  The count compression then fails.  The sequence-form formulation remains
the general equilibrium method, while Proposition~\ref{prop:count-state-equilibrium-recursion}
identifies a tractable and empirically relevant subclass.
The idea that a full at-bat can be treated as a state-indexed stochastic
game and solved backward from terminal counts is not unprecedented: Douglas
et al.~\cite{douglas2023atbat} propose exactly this reduction for a baseball
at-bat valued by on-base percentage, and remark that it can in principle be
solved by classical stochastic-game methods, without supplying a proof.
What Proposition~\ref{prop:count-state-equilibrium-recursion} adds is a
proof, under the stated hypotheses, that this reduction is \emph{exact} for
the run-expectancy utility and coincides with the equilibrium of the
general sequence-form program of Appendix~\ref{vonStengelComputationOfEquilibria}
rather than merely a plausible approximation to it; and the action-value and
leverage decomposition of Section~\ref{subsec:count-state-equilibrium-dp}
built on top of it, which we have not found elsewhere in the baseball
game-theory literature.


\subsubsection{Action values and strategic leverage}

The equilibrium mixture reports which actions are used, but not how costly the
unused alternatives would be.  The count-state recursion supplies this missing
information.  Let $(\pi_P^*(c),\pi_B^*(c))$ be a selected saddle point of
$M_{s^-}(c)$.  For a Pitcher action
$a_P$ and a Batter action $a_B$, define the one-step deviation values
\begin{align}
 Q_P(s^-,c,a_P)
   &:=e_{a_P}^{\mathsf T}M_{s^-}(c)\pi_B^*(c),
   \label{eq:pitcher-action-value}\\
 Q_B(s^-,c,a_B)
   &:=\pi_P^*(c)^{\mathsf T}M_{s^-}(c)e_{a_B}.
   \label{eq:batter-action-value}
\end{align}
These quantities force the indicated current action and then value all later
play through the continuation recursion.  Since the matrix is expressed in
Batter payoff units, the action-specific losses are
\begin{align}
 G_P(s^-,c,a_P)&:=Q_P(s^-,c,a_P)-V_{s^-}(c)\geq 0,
 \label{eq:pitcher-action-gap}\\
 G_B(s^-,c,a_B)&:=V_{s^-}(c)-Q_B(s^-,c,a_B)\geq 0.
 \label{eq:batter-action-gap}
\end{align}
The inequalities follow from the saddle-point property.  They vanish for every
action used with positive probability in the selected equilibrium.  Hence the
gaps quantify something not visible from the strategy probabilities alone:
whether an unused action is nearly interchangeable with the equilibrium support
or is strategically costly.

We summarize the sensitivity of a state by
\begin{equation}
\label{eq:conditional-leverage}
 L_P(s^-,c):=\max_{a_P\in A_P}G_P(s^-,c,a_P),
 \qquad
 L_B(s^-,c):=\max_{a_B\in A_B}G_B(s^-,c,a_B).
\end{equation}
This is a \emph{conditional leverage}: it measures how much action selection
matters, conditional on the state $(s^-,c)$ having been reached.  Let
$\rho_{s^-}(c)$ denote the equilibrium probability of reaching count $c$ from
$(s^-,0\text{--}0)$.  The corresponding reach-weighted leverage is
\begin{equation}
\label{eq:reach-weighted-leverage}
 \Lambda_i(s^-,c):=\rho_{s^-}(c)L_i(s^-,c),
 \qquad i\in\{P,B\}.
\end{equation}
The distinction is important.  A poor action can have a dramatic conditional
cost at a count that is almost never reached, while a smaller action gap at a
common count can have a larger effect on the ex ante value of the plate
appearance.  Thus $L_i$ answers ``how consequential is the decision here?'',
whereas $\Lambda_i$ answers ``how much does this decision state matter from the
start of the plate appearance?''

Because $s^-$ enters the terminal utilities, both the equilibrium and the
leverage profile can change with the baseball context even when Nature's
pitch-level kernel is held fixed.  The same count can therefore have different
strategic significance with empty bases, a runner in scoring position, or the
bases loaded.  This is the principal empirical use of the DP output developed
here: it produces a count-by-record map of counterfactual action costs rather
than merely another display of equilibrium mixing probabilities.

\begin{figure}[H]
  \centering
\begin{tikzpicture}[scale=0.98,transform shape,font=\small]
\node[font=\bfseries] at (6.45,1.35)
{Where a poor pitch choice matters: equilibrium reach-weighted leverage};

\begin{scope}[xshift=0cm]
  \node[font=\bfseries\footnotesize,align=center] at (1.8,0.55) {Runner on third, 0 outs};
  \foreach \b/\s/\v in {
        0/0/0.94,
        1/0/0.17,
        2/0/0.05,
        3/0/3.92,
        0/1/0.63,
        1/1/0.36,
        2/1/0.03,
        3/1/22.99,
        0/2/0.38,
        1/2/0.45,
        2/2/0.18,
        3/2/0.13
  }{
    \pgfmathsetmacro{\tint}{4+31*min(\v/23,1)}
    \fill[orange!\tint] (\b,-\s) rectangle ++(0.9,-0.9);
    \draw[gray!60,line width=0.35pt] (\b,-\s) rectangle ++(0.9,-0.9);
    \node[font=\footnotesize] at (\b+0.45,-\s-0.45) {\v};
  }
  \foreach \b in {0,1,2,3}
    \node[font=\footnotesize] at (\b+0.45,-3.08) {\b};
  \foreach \s in {0,1,2}
    \node[font=\footnotesize] at (-0.22,-\s-0.45) {\s};
  \node[font=\footnotesize] at (1.8,-3.45) {Balls};
  \node[font=\footnotesize,rotate=90] at (-0.72,-1.35) {Strikes};
\end{scope}

\begin{scope}[xshift=4.45cm]
  \node[font=\bfseries\footnotesize,align=center] at (1.8,0.55) {Bases loaded, 1 out};
  \foreach \b/\s/\v in {
        0/0/2.79,
        1/0/0.61,
        2/0/0.05,
        3/0/9.24,
        0/1/1.81,
        1/1/1.10,
        2/1/0.21,
        3/1/0.14,
        0/2/1.07,
        1/2/1.27,
        2/2/0.58,
        3/2/0.21
  }{
    \pgfmathsetmacro{\tint}{4+31*min(\v/23,1)}
    \fill[orange!\tint] (\b,-\s) rectangle ++(0.9,-0.9);
    \draw[gray!60,line width=0.35pt] (\b,-\s) rectangle ++(0.9,-0.9);
    \node[font=\footnotesize] at (\b+0.45,-\s-0.45) {\v};
  }
  \foreach \b in {0,1,2,3}
    \node[font=\footnotesize] at (\b+0.45,-3.08) {\b};
  \foreach \s in {0,1,2}
    \node[font=\footnotesize] at (-0.22,-\s-0.45) {\s};
  \node[font=\footnotesize] at (1.8,-3.45) {Balls};
\end{scope}

\begin{scope}[xshift=8.9cm]
  \node[font=\bfseries\footnotesize,align=center] at (1.8,0.55) {Bases empty, 1 out};
  \foreach \b/\s/\v in {
        0/0/1.00,
        1/0/0.32,
        2/0/0.06,
        3/0/2.97,
        0/1/0.54,
        1/1/0.41,
        2/1/0.15,
        3/1/0.28,
        0/2/0.28,
        1/2/0.38,
        2/2/0.24,
        3/2/0.02
  }{
    \pgfmathsetmacro{\tint}{4+31*min(\v/23,1)}
    \fill[orange!\tint] (\b,-\s) rectangle ++(0.9,-0.9);
    \draw[gray!60,line width=0.35pt] (\b,-\s) rectangle ++(0.9,-0.9);
    \node[font=\footnotesize] at (\b+0.45,-\s-0.45) {\v};
  }
  \foreach \b in {0,1,2,3}
    \node[font=\footnotesize] at (\b+0.45,-3.08) {\b};
  \foreach \s in {0,1,2}
    \node[font=\footnotesize] at (-0.22,-\s-0.45) {\s};
  \node[font=\footnotesize] at (1.8,-3.45) {Balls};
\end{scope}

\begin{scope}[xshift=3.85cm,yshift=-4.25cm]
  \node[font=\footnotesize,anchor=east] at (-0.15,0.18)
  {Reach-weighted leverage (milli-runs)};
  \foreach \i in {0,...,20}{
    \pgfmathsetmacro{\p}{\i/20}
    \pgfmathsetmacro{\tint}{4+31*\p}
    \fill[orange!\tint] (0.18*\i,0) rectangle ++(0.18,0.36);
  }
  \draw[gray!70] (0,0) rectangle (3.78,0.36);
  \node[font=\scriptsize,anchor=north] at (0,0) {0};
  \node[font=\scriptsize,anchor=north] at (1.89,0) {11.5};
  \node[font=\scriptsize,anchor=north] at (3.78,0) {23};
\end{scope}
\end{tikzpicture}
  \caption{Preliminary Pitcher reach-weighted leverage
  $1000\,\Lambda_P(s^-,c)$, in thousandths of a run, for three initial
  base--out records. The calculation uses the empirically calibrated three-action Pitcher
specification together with the standard four-Ball, three-Strike count
structure. Darker cells mark
  states at which a poor pitch choice combines a meaningful conditional loss
  with a non-negligible equilibrium reach probability.}
  \label{fig:pitcher-reach-weighted-leverage}
\end{figure}

Figure~\ref{fig:pitcher-reach-weighted-leverage} illustrates why both forms of
leverage are useful.  Three-ball counts can carry large conditional penalties
because a Ball is then terminal, but the reach-weighted display prevents a rare
state from dominating the interpretation merely because its conditional loss
is large.  In the present calibration, the largest reach-weighted Pitcher
leverage occurs at the $3$--$1$ count with a runner on third and no outs.  Its
conditional action gap is approximately $0.556$ runs, its equilibrium reach
probability is approximately $0.0414$, and their product is approximately
$0.0230$ runs.  This does not by itself establish empirical optimality or
suboptimality of any observed player.  Rather, it identifies the game contexts
in which observed decisions would be most informative to evaluate.

\begin{remark}[Relation to run expectancy]
The value function $V_{s^-}$ used in this section is a game-theoretically consistent
analogue of the run-expectancy tables used in sabermetrics (e.g.\ RE24,
\cite{tangotiger-rematrix}), indexed by information set --- count, pitch history, and the
acting player's belief --- rather than by base--out state alone, and computed as a best
response rather than fitted from play-by-play averages.
\end{remark}


\section{Results}
\label{sec:results} 
The numerical analysis illustrates how the extensive-form formulation,
the sequence-form linear programs, and the count-state recursion produce
computable equilibrium quantities with distinct strategic interpretations.
We compare equilibrium behavioral strategies across several initial
base--out records $s^-$, while retaining the same underlying game tree and
Nature transition kernel, and we compare the Batter's model-implied swing
probabilities at different counts with empirical frequencies obtained from
Statcast.

The illustrations use two related but distinct computational regimes. First,
Figure~\ref{fig:pitcher-reach-weighted-leverage} reports the leverage
quantities generated by the count-state minimax recursion. That computation
uses the standard four-Ball, three-Strike count structure together with the
empirically calibrated three-action Pitcher specification. It demonstrates
how the state recursion produces equilibrium continuation values, action
values, and reach-weighted measures of the strategic cost of non-optimal
actions.

Second, the computations reported below solve the coupled sequence-form
linear programs on a reduced extensive game. In this game, a walk occurs
after three Balls, a strikeout after two Strikes, and the Pitcher's action
set is
\[
A_P=
\{\text{Fastball\_Bottom},\text{Offspeed\_Bottom},
\text{Offspeed\_Chase}\}.
\]
These restrictions reduce the size of the game tree submitted to the
sequence-form programs while preserving the multi-stage
imperfect-information structure of the plate appearance. The resulting
equilibrium realization plans are converted into behavioral strategies and
compared across three initial records along the same fixed path through the
game tree. The two computational regimes therefore serve complementary
purposes and are not intended for direct numerical comparison: the first
illustrates the full count-state recursion and its leverage measures, while
the second illustrates exact sequence-form equilibrium computation on an
explicit extensive game.

\begin{figure}[h!]
    \centering
    \includegraphics[width=1.0\linewidth]{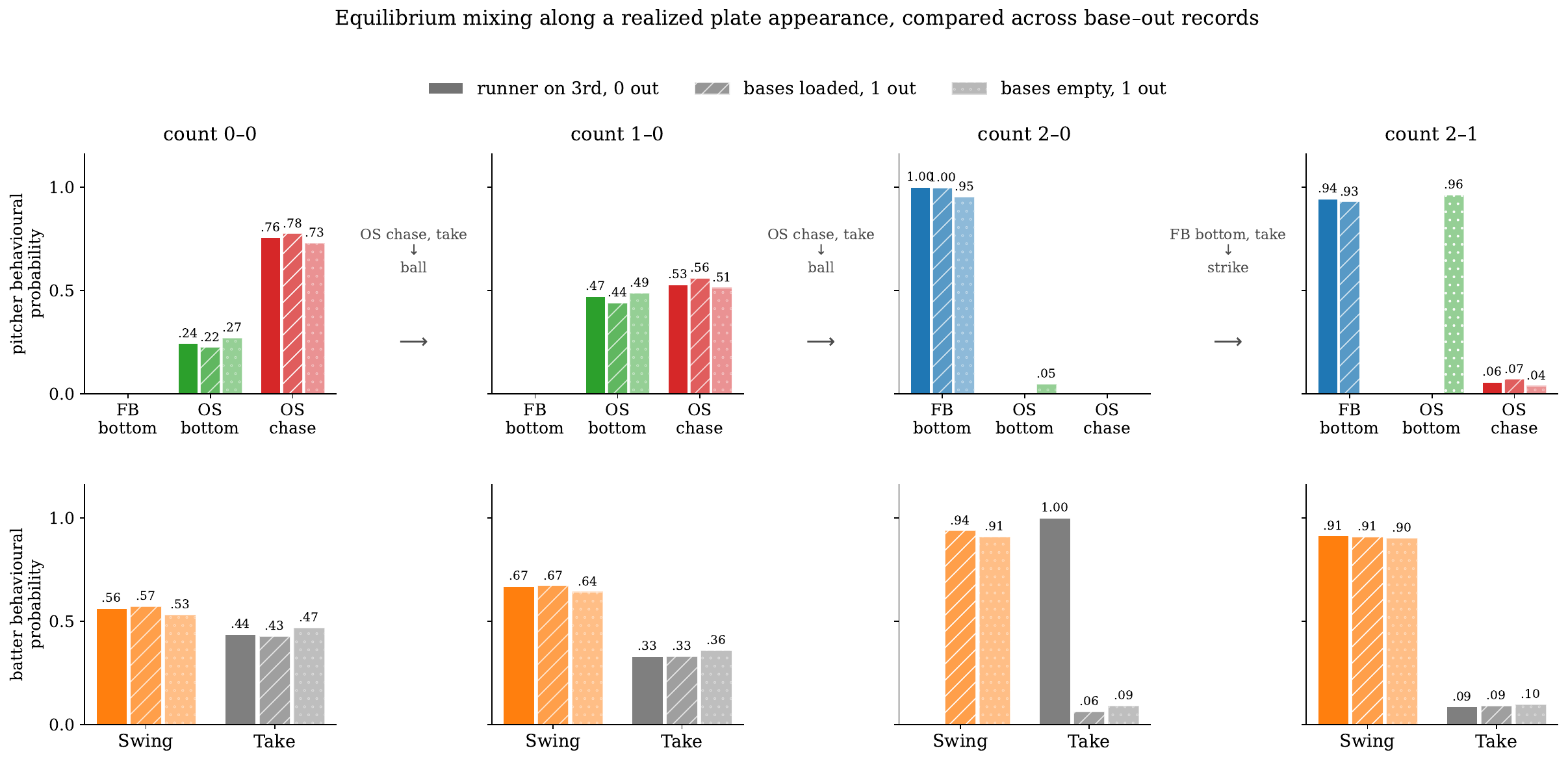}
    \caption{Note the arrow in between each count specifies the edges taken in this fixed path in the game tree. For example, between count 1-0 and 2-0, even though both agents have distributions over their action, we follow the deterministic path in the tree defined by the edges representing agent actions of \textit{Offspeed\_Chase} for the pitcher followed by batter \textit{Take} and nature result of a \textit{Ball}. The same logic is applied for actions taken at all the pitch cycles.}
    \label{fig:pathwiseEquilibriumMixedStrategies}
\end{figure}

\medskip

\textbf{Observations: } We have the strategies for both agents are very similar between all records except when the counts get closer to the boundary condition. Here we see for $s^-$ corresponding to records with runners on base, when the count gets close to a walk, the pitcher favours pitches that are in the zone and the batter. On the other hand, when there is a runner on third, the batter favors taking a pitch. Finally, we see when teh count is 2-1, the equilibrium strategy favors an aggressive approach looking to avoid a strikeout.

Below is a figure comparing the swing-take equilibrium strategy for the batter at different counts with that of the empirical frequency computed by Statcast.

\begin{figure}[H]
\centering
\includegraphics[width=0.9\linewidth]{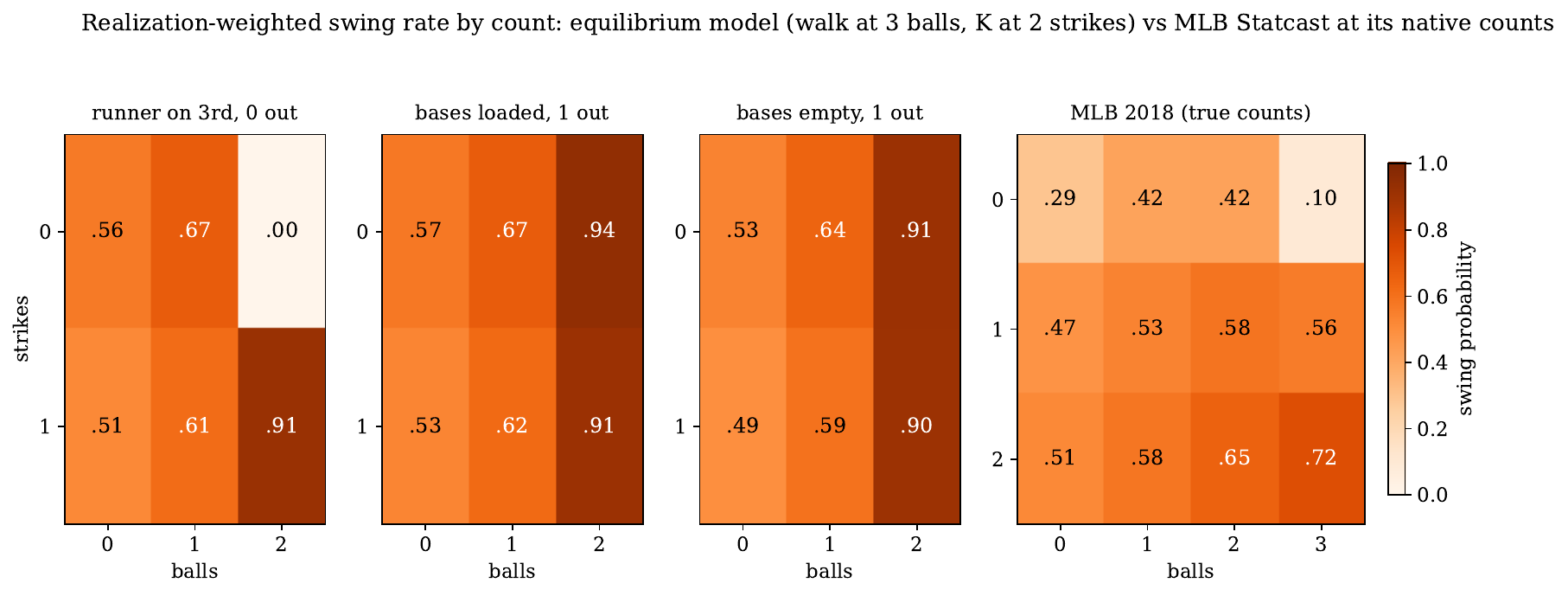}
\caption[Equilibrium swing rates by count]
{Equilibrium swing rates by count, compared with MLB Statcast data.}
\label{fig:historicalSwingRatesConditionalOnCount}
\begin{minipage}{0.9\linewidth}
\small
Let $(x,y)$ denote the equilibrium realization plans obtained from the joint
LPs (\ref{def:joint-lps}) (in Appendix \ref{vonStengelComputationOfEquilibria}), and let $\beta_B$ be the
behavioural strategy induced by $x$ via Proposition~4.2. Every node of a
batter information set $u \in \mathcal{U}_B$ shares the same history of
Nature outcomes, so $\mathrm{count}(u)=(b,s)$ is well defined
(Definition~1.1), as is the batter sequence $\sigma_B(u)$
(Proposition~4.1). The first three panels show, for each initial record
$s^-$, the equilibrium probability that the Batter swings at the next pitch
conditional on the plate appearance reaching count $(b,s)$:
\[
S(b,s)
=
\frac{
    \displaystyle
    \sum_{u:\,\mathrm{count}(u)=(b,s)}
    x\!\left(\sigma_B(u)\right)
    P_y(A_u)\,
    \beta_B\!\left(\textsc{Swing}\mid u\right)
}{
    \displaystyle
    \sum_{u:\,\mathrm{count}(u)=(b,s)}
    x\!\left(\sigma_B(u)\right)
    P_y(A_u)
}.
\]
Here $P_y(A_u)$ is the reach weight of Definition~5.6, so that
$x(\sigma_B(u))\,P_y(A_u)$ is the equilibrium probability of reaching $u$;
information sets unreached in equilibrium receive zero weight.

In the reported equilibria, all reached information sets at a given count
play an identical mix, so each cell equals that common value. The model
awards a walk at three balls and a strikeout at two strikes
(Definition~1.2), so its grid spans $b \in \{0,1,2\}$,
$s \in \{0,1\}$. The rightmost panel shows league-wide MLB swing
frequencies (swings divided by pitches thrown at each count; 2018 MLB
Statcast pitch-level data~\cite{mlb-statcast}).
\end{minipage}
\end{figure}

\section{Conclusion}
\label{sec:conclusion}

We have formulated a complete baseball plate appearance as a finite
two-person, zero-sum, extensive-form game with chance moves and imperfect
information. The evolving count connects successive pitch cycles, while the
Batter's information sets represent the fact that the current pitch choice is
not observed before the Swing-or-Take decision. Terminal outcomes are valued
through a run-expectancy utility that incorporates both the immediate runs
scored and the continuation value of the half-inning. This formulation places
the Pitcher--Batter confrontation within a single dynamic game rather than a
collection of unrelated pitch-level matrix games.

The main computational framework is the sequence-form representation. It
replaces optimization over the exponentially large space of complete
pure-strategy plans by coupled linear programs whose dimensions are linear in
the extensive game tree. The resulting realization plans yield exact minimax
behavioral strategies while respecting the information and perfect-recall
constraints of the underlying game. Together with the Statcast-estimated
transition kernel for Nature, this provides a complete computational pipeline
from the specification of the dynamic interaction to equilibrium strategies
and their baseball interpretation.

The paper also identifies two distinct roles for dynamic programming. For a
general finite two-person zero-sum extensive-form game with perfect recall,
once the opponent's realization plan is fixed, the dual variables of the
sequence-form best-response program decompose into reach weights and
conditional continuation values. Under the additional state-Markov structure
of the present baseball model, the equilibrium problem itself admits a sharper
reduction: the full minimax value and equilibrium strategies can be computed
backward over the twelve non-terminal counts by solving one local zero-sum
matrix game at each count. These two computations are complementary. The
sequence form applies to the full history-dependent extensive game, whereas
the count-state recursion exploits the more restrictive setting in which the
pair consisting of the initial base--out record and the current count provides
a sufficient description of the continuation game.

The count-state computation also produces equilibrium action values and
conditional and reach-weighted leverage measures. These quantities distinguish
the cost of a non-optimal action conditional on reaching a count from its
overall strategic importance after the probability of reaching that count is
taken into account. The numerical results therefore illustrate not only how
equilibrium strategies change with the initial base--out record, but also how
the computational framework can identify the stages of the plate appearance
at which strategic choices have the greatest consequences.

The present specification is deliberately parsimonious, but its main
components---the players' action sets, the Batter's information, Nature's
transition probabilities, and the terminal utility---can be refined without
changing the underlying structure of the model. Because the game is formulated
in sequence form, such extensions remain connected to a practical method for
computing equilibria, one that has been applied elsewhere to substantially
larger extensive-form games, although we do not explore that scale here. We
have also provided the definitions and computational details needed to
reproduce, modify, and extend the numerical implementation. The software and
accompanying documentation are available at
\url{https://github.com/ekohn63/NashEquilibrium/tree/strat_infoset}.

\section*{Acknowledgements}

The authors thank Aidan Schneider from the Department of Mathematics and Statistics at the University of Victoria for useful discussions at the beginning of the project.

\appendix

\section{Baseball Terminology and Conventions}
\label{app:baseball-background}

This appendix collects the basic baseball terminology used in
Section~\ref{sec:plate-appearance-model}. A run is scored when a player advances through all four bases and reaches
home plate. A baseball game is normally played over nine innings. If the score is tied after nine innings, additional innings are played until an inning ends with one team ahead. The team scoring the greater number of runs wins the game. Each inning is divided into two half-innings. During a half-inning, one team is on offense and sends players to bat, while the other team is on defense and attempts to record outs. The offensive team seeks to advance runners around the bases and score runs. A half-inning ends when the defensive team records
three outs, after which the teams exchange offensive and defensive roles.

A batter who reaches first, second, or third base becomes a runner. Runners may
advance when the batter puts the ball in play, although advancement may also
occur through other events such as a walk. Baseball permits several ways of
recording an out. For the purposes of the record-transition map introduced in
Definition~\ref{def:state}, an out in play and a strikeout both increase the
number of outs by one, although they arise from different terminal histories
in the extensive game.

A plate appearance is one completed turn by a Batter. In our model, its
terminal outcomes are
\[
\{\text{Single},\text{Double},\text{Triple},\text{Home Run},
  \text{Out},\text{Strikeout},\text{Walk}\}.
\]
A single, double, or triple is classified according to whether the Batter
reaches first, second, or third base, respectively. On a home run, the Batter
and all runners already on base score. A walk places the Batter on first base
and forces other runners to advance when necessary.

A half-inning therefore consists of a sequence of plate appearances ending
when three outs have been recorded. Each plate appearance may score runs and
changes the base--out record. In the mathematical model, a terminal history
$z\in Z$ records the sequence of actions leading to the terminal outcome of
one plate appearance.


\section{Linear Programming Formulation}   \label{linearProgramFormulation}

\subsection{Zero-Sum Equilibrium}

\begin{lemma} 
\label{bestResponse}[Mixed Best Response Support~\footnote{This is equivalent to the "Best Response Condition" in Proposition 3.1 of the Algorithmic Game Theory book \cite{agt_nisan}, which implies that a player's mixed best response support consists entirely of pure strategies that individually achieve the optimal value.}]
Given a mixed strategy $\pi_B$ there is always a pure strategy $s_P \in \Sigma_P$ that is as good as a mixed strategy $\pi_P$. That is,
\[
\min_{\pi_P} \pi_B^T\,\,U\text{ }\pi_P = \min_{s_P \in \Sigma_P}\mathbb{E}_{X \sim \pi_{B}}[U(X,s_P)] \,\, = \min_{s_P \in \Sigma_P} \sum_{s_B \in \Sigma_B}U(s_B,s_P) \cdot \pi_B(s_B).
\] 

Similarly, given a mixed strategy $\pi_P$ there is a pure strategy $s_B$ that is as good as a mixed strategy $\pi_B$. That is: 
\[
\max_{\pi_B} \, \pi_B^T \,\, U \,\,\pi_P = \max_{s_B \in \Sigma_B} \sum_{s_P \in \Sigma_P}U(s_B,s_P)\cdot \pi_P(s_P).
\]
\end{lemma}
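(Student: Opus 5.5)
The plan is to observe that, for a fixed mixed strategy $\pi_B$ of the Batter, the map $\pi_P\mapsto \pi_B^{\mathsf T}U\pi_P$ is linear in $\pi_P$ and that $\Delta(\Sigma_P)$ is the simplex whose extreme points are the pure strategies $e_{s_P}$, $s_P\in\Sigma_P$. A linear function attains its minimum over a simplex at a vertex, and the resulting value is the one stated in the lemma. The argument for the Batter's maximization against a fixed $\pi_P$ is the mirror image.

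Concretely, first define the column payoffs $c(s_P):=\sum_{s_B\in\Sigma_B}U(s_B,s_P)\,\pi_B(s_B)$, which are exactly $\mathbb{E}_{X\sim\pi_B}[U(X,s_P)]$. Second, expand $\pi_B^{\mathsf T}U\pi_P=\sum_{s_P}\pi_P(s_P)\,c(s_P)$ using the bilinear form \eqref{expectedPayoffForMixed}. This is a convex combination of the numbers $c(s_P)$, so it is at least $\min_{s_P}c(s_P)$. Third, note the reverse inequality: taking $\pi_P=e_{s_P^\ast}$ with $s_P^\ast\in\arg\min c$ attains that value, so $\min_{\pi_P}\pi_B^{\mathsf T}U\pi_P\le \min_{s_P}c(s_P)$. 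The minimum over $\pi_P$ exists because $\Sigma_P$ is finite, the simplex is compact, and the objective is continuous; alternatively, the two inequalities already exhibit a minimizer. Together these give the three equalities in the first display.

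For the second display, set $d(s_B):=\sum_{s_P}U(s_B,s_P)\pi_P(s_P)$ and write $\pi_B^{\mathsf T}U\pi_P=\sum_{s_B}\pi_B(s_B)\,d(s_B)\le\max_{s_B}d(s_B)$. Equality is attained at the pure strategy $e_{s_B^\ast}$ for $s_B^\ast\in\arg\max d$. As a byproduct, any optimal $\pi_B$ is supported on $\arg\max d$: if some $s_B$ with $d(s_B)<\max d$ had positive weight, the convex combination would fall strictly below the maximum. This is the best-response support condition cited in the footnote.

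There is no real obstacle here. The only point to handle carefully is bookkeeping: $U$ must be read as the payoff matrix indexed by $\Sigma_B\times\Sigma_P$ with entries from \eqref{expectedPayoff}, so that the mixed payoff is genuinely bilinear in $(\pi_B,\pi_P)$ and the vertex argument applies. Finiteness of $\Sigma_B$ and $\Sigma_P$, guaranteed because the stopped tree has depth at most six pitch cycles, ensures that all the minima and maxima are attained.
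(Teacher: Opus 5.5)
Your argument is correct and matches the paper's route: the paper gives no proof of its own but defers, via the footnote, to the Best Response Condition (Proposition 3.1 of the Algorithmic Game Theory book). The standard proof of that condition is exactly your observation that $\pi_B^{\mathsf T}U\pi_P$ is a convex combination of the pure-strategy payoffs $c(s_P)$ (resp.\ $d(s_B)$), so the extremum is attained at a vertex, and your support observation is that cited condition itself.
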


\subsubsection{Linear Program Formulation for Computing the Equilibrium}

Under the zero-sum framework, the batter selects a mixed strategy $\pi_B \in \Delta(\Sigma_B)$ to maximize their expected payoff against an adversarial pitcher who simultaneously chooses a mixed strategy $\pi_P \in \Delta(\Sigma_P)$ to minimize it. Symmetrically, for any fixed mixed strategy chosen by the batter, the pitcher seeks an optimal response to minimize this same expected value:
\[
\min_{\pi_P \in \Delta(\Sigma_P)} \sum_{s_B \in \Sigma_B} \sum_{s_P \in \Sigma_P} \pi_B(s_B) \, \pi_P(s_P) \, U(s_B, s_P)
\]

For a fixed mixed strategy $\pi_B$ of the batter, the pitcher seeks an optimal adversarial counter-strategy to minimize the expected payoff. Using Lemma \ref{bestResponse} above, this minimization over the compact strategy simplex reduces to an optimization over the pitcher's pure strategy set $\Sigma_P$. This optimal response defines the batter's guaranteed security level, denoted by the scalar $\alpha \in \mathbb{R}$:
\[
\alpha := \min_{\pi_P} \pi_{B}^\top U \pi_{P} = \min_{s_P \in \Sigma_P} \sum_{s_B \in \Sigma_B} U(s_B, s_P) \, \pi_B(s_B)
\]

Anticipating this optimal adversarial minimization by the pitcher, the batter selects a mixed strategy $\pi_B \in \Delta(\Sigma_B)$ to maximize their guaranteed security level. This yields the standard maximin optimization problem:
\[
\max_{\pi_B \in \Delta(\Sigma_B)} \min_{\pi_P \in \Delta(\Sigma_P)} \pi_{B}^\top U \pi_{P} = \max_{\pi_B \in \Delta(\Sigma_B)} \min_{s_P \in \Sigma_P}  \sum_{s_B \in \Sigma_B} U(s_B, s_P) \, \pi_B(s_B)
\]

By treating the inner minimization over the finite set $\Sigma_P$ as a system of simultaneous bounding constraints, the batter's objective can be cast as the following primal linear programming problem:
\begin{equation}
\label{lp:primal-strategic}
\begin{aligned}
\max_{\alpha,\;\pi_B} \quad & \alpha \\
\text{s.t.}\quad 
& \alpha - \sum_{s_B \in \Sigma_B} U(s_B, s_P) \, \pi_B(s_B) \le 0
&& \forall\, s_P \in \Sigma_P,\\[4pt]
& \sum_{s_B \in \Sigma_B} \pi_B(s_B) = 1,\\
& \pi_B(s_B) \ge 0 
&& \forall\, s_B \in \Sigma_B,\\[2pt]
& \alpha \in \mathbb{R}.
\end{aligned}
\end{equation}

Symmetrically, evaluating the game from the pitcher's perspective, for any fixed mixed strategy $\pi_P \in \Delta(\Sigma_P)$, the batter will choose an optimal counter-strategy to maximize the expected payoff. Invoking the second component of the preceding lemma, this optimization similarly reduces to a search over the batter's pure strategy space $\Sigma_B$. The pitcher's guaranteed upper bound on expected exposure, denoted by the scalar $\beta \in \mathbb{R}$, is explicitly defined as:
\[
\beta := \max_{\pi_B \in \Delta(\Sigma_B)} \pi_{B}^\top U \pi_P = \max_{s_B \in \Sigma_B} \sum_{s_P \in \Sigma_P} U(s_B, s_P) \, \pi_P(s_P)
\]

The pitcher seeks a mixed strategy to minimize this maximum guaranteed payoff. This minimax objective is formally expressed as the following dual linear programming problem:
\begin{equation}
\label{lp:dual-strategic}
\begin{aligned}
\min_{\beta,\;\pi_P} \quad & \beta \\
\text{s.t.}\quad 
& \beta - \sum_{s_P \in \Sigma_P} U(s_B, s_P) \, \pi_P(s_P) \ge 0 
&& \forall\, s_B \in \Sigma_B,\\[4pt]
& \sum_{s_P \in \Sigma_P} \pi_P(s_P) = 1,\\
& \pi_P(s_P) \ge 0 
&& \forall\, s_P \in \Sigma_P,\\[2pt]
& \beta \in \mathbb{R}.
\end{aligned}
\end{equation}

\begin{theorem}
\label{thm:pure-strat-lp}
There exist optimal mixed strategies $\pi_B^\star \in \Delta(\Sigma_B)$ and $\pi_P^\star \in \Delta(\Sigma_P)$ for the strategic-form game such that:
\[
\max_{\pi_B \in \Delta(\Sigma_B)} \min_{\pi_P \in \Delta(\Sigma_P)} \pi_B^{\top} U \pi_P \;=\; \min_{\pi_P \in \Delta(\Sigma_P)} \max_{\pi_B \in \Delta(\Sigma_B)} \pi_B^{\top} U \pi_P \;=\; (\pi_B^\star)^{\top} U \pi_P^\star
\]
The strategy profile $(\pi_B^\star, \pi_P^\star)$ constitutes a saddle-point minimax solution, corresponding to a Nash equilibrium of the zero-sum game.
\end{theorem}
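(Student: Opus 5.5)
The plan is to derive Theorem~\ref{thm:pure-strat-lp} from linear-programming duality applied to the pair \eqref{lp:primal-strategic}--\eqref{lp:dual-strategic}. We will check that these two programs are LP duals, that both are feasible and bounded, and then read off the saddle-point property from strong duality. The finite dimensionality needed here holds because $\Sigma_B$ and $\Sigma_P$ are finite, as a consequence of the finiteness of the stopped tree (Section~\ref{subsec:history_set}).

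\textbf{Step 1: duality.} Write \eqref{lp:primal-strategic} in standard form. Its variables are $(\alpha,\pi_B)$, with $\alpha$ free and $\pi_B\ge 0$. There is one inequality row per $s_P\in\Sigma_P$ and a single equality row $\mathbf 1^\top\pi_B=1$. Attach a multiplier $\pi_P(s_P)\ge 0$ to each inequality row and a free multiplier $\beta$ to the equality row, then form the dual by the usual rules.
\begin{itemize}
\item The free variable $\alpha$ gives the equality $\sum_{s_P}\pi_P(s_P)=1$.
\item Each nonnegative variable $\pi_B(s_B)$ gives the inequality $\beta-\sum_{s_P}U(s_B,s_P)\pi_P(s_P)\ge 0$.
\item The dual objective is $\min\beta$.
\end{itemize}
This is exactly \eqref{lp:dual-strategic}.

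\textbf{Step 2: feasibility and boundedness.} Both programs are feasible. For the primal, take any pure $\pi_B=e_{s_B}$ together with $\alpha=\min_{s_P}U(s_B,s_P)$. For the dual, take any pure $\pi_P$ together with $\beta=\max_{s_B}U(s_B,s_P)$. By weak duality every primal value is at most every dual value, so both optima are finite. Strong duality then produces optimal solutions $(\alpha^\star,\pi_B^\star)$ and $(\beta^\star,\pi_P^\star)$ with $\alpha^\star=\beta^\star$.

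\textbf{Step 3: identifying the LP values with max-min and min-max.} By Lemma~\ref{bestResponse}, for fixed $\pi_B$ the largest feasible $\alpha$ equals $\min_{\pi_P}\pi_B^\top U\pi_P$. Hence $\alpha^\star=\max_{\pi_B}\min_{\pi_P}\pi_B^\top U\pi_P$. Symmetrically, $\beta^\star=\min_{\pi_P}\max_{\pi_B}\pi_B^\top U\pi_P$, and equality of these two values follows from $\alpha^\star=\beta^\star$.

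\textbf{Step 4: the saddle point.} Feasibility of $\pi_B^\star$ in the primal gives $(\pi_B^\star)^\top U\pi_P\ge\alpha^\star$ for every $\pi_P$, since a mixed $\pi_P$ is a convex combination of pure columns. Feasibility of $\pi_P^\star$ in the dual gives $\pi_B^\top U\pi_P^\star\le\beta^\star$ for every $\pi_B$. Evaluating both inequalities at $(\pi_B^\star,\pi_P^\star)$ yields $\alpha^\star\le(\pi_B^\star)^\top U\pi_P^\star\le\beta^\star=\alpha^\star$, so this common value is the value of the game. The same two inequalities say that no unilateral deviation helps either player, so $(\pi_B^\star,\pi_P^\star)$ is a saddle point, that is, a Nash equilibrium of the zero-sum game.

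The main obstacle is Step 1: getting the dualization signs right, in particular that the free $\alpha$ produces an equality constraint and that the orientation of the inequalities matches \eqref{lp:dual-strategic}. Once that is verified, the remaining steps follow directly from strong duality.
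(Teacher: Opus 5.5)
Your proposal is correct and follows the paper's proof: you dualize \eqref{lp:primal-strategic} with multipliers $\pi_P(s_P)\ge 0$ and a free $\beta$ to recover \eqref{lp:dual-strategic}, check that both programs are feasible, and invoke strong duality to get $\alpha^\star=\beta^\star$. Your Steps 3 and 4 spell out what the paper compresses into its final sentence: they identify the LP optima with the max--min and min--max values via Lemma~\ref{bestResponse} and verify the saddle-point inequalities. Your explicit feasible points combined with weak duality also give a cleaner boundedness argument than the paper's appeal to compactness.
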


\begin{proof}
Let \eqref{lp:primal-strategic} serve as the primal linear programming problem representing the batter's maximin optimization model. We apply standard algebraic rules of linear programming duality to construct its formal dual. 

To do so, let $\pi_P(s_P) \ge 0$ be the dual multipliers assigned to the system of structural security constraints $\alpha - \sum_{s_B \in \Sigma_B} U(s_B, s_P) \, \pi_B(s_B) \le 0$ for each $s_P \in \Sigma_P$. Similarly, let $\beta \in \mathbb{R}$ be the free dual variable corresponding to the primal strategy normalization constraint $\sum_{s_B \in \Sigma_B} \pi_B(s_B) = 1$. Collecting coefficients yields the dual objective of minimizing $\beta$, subject to the dual constraints $\beta - \sum_{s_P \in \Sigma_P} U(s_B, s_P) \, \pi_P(s_P) \ge 0$ for each $s_B \in \Sigma_B$, alongside the probability simplex conditions $\sum_{s_P \in \Sigma_P} \pi_P(s_P) = 1$ and $\pi_P(s_P) \ge 0$. By structural inspection, this exact transformation yields the pitcher's optimization model formulated in \eqref{lp:dual-strategic}.

Both linear programs are inherently feasible; assigning a uniform probability distribution over the finite sets $\Sigma_B$ and $\Sigma_P$ satisfies the structural restrictions of both spaces. Furthermore, because the pure strategy sets are finite, the objectives are bounded over the compact simplices. By the Strong Duality Theorem of Linear Programming, the existence of a feasible primal-dual pair guarantees that their respective optimal objective values exist and must coincide, such that $\alpha^\star = \beta^\star$. This exact algebraic identity fulfills Von Neumann's minimax condition, establishing the existence of the optimal mixed strategy pair $(\pi_B^\star, \pi_P^\star)$ and completing the proof.
\end{proof}

\section{von Stengel's Computation of Equilibria} \label{vonStengelComputationOfEquilibria}

\subsection{Realization Plans and Behavioral Strategies}
\label{app:realization-behavioral}
\begin{proposition} \label{prop:realization-plan-induced}
 For any valid realization plan $r_i$, there exists a behavioral strategy profile $\beta_i$ that induces $r_i$.
\end{proposition}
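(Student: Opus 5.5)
The construction is the standard one: define the behavioral strategy information set by information set as the ratio of the realization weight of an extended sequence to the weight of its parent sequence. Fix a valid realization plan $r_i$ satisfying \eqref{realizationPlanConstraints1}--\eqref{realizationPlanConstraints3}. For each $v\in\mathcal U_i$ with parent sequence $\sigma_v$ (well defined by Proposition~\ref{prop:infoset-sequences}) and each $a\in A(v)$, set
\[
\beta_i(a\mid v):=
\begin{cases}
\dfrac{r_i(\sigma_v a)}{r_i(\sigma_v)}, & r_i(\sigma_v)>0,\\[2mm]
\dfrac{1}{|A(v)|}, & r_i(\sigma_v)=0.
\end{cases}
\]
The second branch may be any fixed distribution.

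First, check that $\beta_i(\cdot\mid v)$ is a probability distribution. Nonnegativity follows from \eqref{realizationPlanConstraints3}. Summation to one follows from the flow constraint \eqref{realizationPlanConstraints2}, since $\sum_{a}r_i(\sigma_v a)=r_i(\sigma_v)$; in the zero case it holds by construction.

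Second, show that the plan induced by $\beta_i$ equals $r_i$. The argument is by induction on the length of sequences in $S_i$.
\begin{itemize}
\item The empty sequence has induced weight $1=r_i(\varnothing)$ by \eqref{realizationPlanConstraints1}.
\item Every nonempty sequence has the unique form $\sigma_v a$. This uses perfect recall: each sequence arises from a unique information set $v$, because the sequences extending $\sigma_v$ through different information sets carry different last actions/labels. In our tree this is immediate, since actions at distinct information sets are distinct edges.
\item The induced weight is therefore $r_i'(\sigma_v a)=r_i'(\sigma_v)\,\beta_i(a\mid v)$. By the induction hypothesis, $r_i'(\sigma_v)=r_i(\sigma_v)$.
\item If $r_i(\sigma_v)>0$, the product is exactly $r_i(\sigma_v a)$.
\item If $r_i(\sigma_v)=0$, then $r_i'(\sigma_v a)=0$. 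Also $r_i(\sigma_v a)=0$, because the flow constraint together with nonnegativity forces every extension of a zero-weight sequence to have weight zero.
\end{itemize}

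The only delicate point is this second step. It requires the product formula $r_i(s_i)=\prod_{c\in s_i}\beta_i(c)$ to be read as the product over the information-set/action pairs along the sequence, which is unambiguous only because of perfect recall (Proposition~\ref{prop:batter-perfect-recall} and the Pitcher's singleton partition). I would state this identification explicitly before the induction. The zero-probability branch is handled as above, and it shows the inducing behavioral strategy is not unique off the support.
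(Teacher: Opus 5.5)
Your proposal is correct and follows the paper's proof: the same ratio construction $\beta_i(a\mid v)=r_i(\sigma_v a)/r_i(\sigma_v)$ on the support, an arbitrary distribution where $r_i(\sigma_v)=0$, and normalization via the flow constraint. You go beyond the paper by carrying out the induction it only asserts, including the zero-weight case, where the flow constraint and nonnegativity force $r_i(\sigma_v a)=0$.
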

\begin{proof}
We construct a candidate behavioral strategy profile $\beta_i$ and verify that it induces $r_i$. For each information set $v \in \mathcal{U}_i$ and action $a \in A(v)$, define:
\[
\begin{cases}
    \beta_i(a \mid v) = \frac{r_i(\sigma_v a)}{r_i(\sigma_v)}, & \text{if } r_i(\sigma_v) > 0, \\
    \beta_i(\cdot \mid v) \text{ is an arbitrary probability distribution over } A(v), & \text{if } r_i(\sigma_v) = 0.
\end{cases}
\]
Since $r_i$ is a realization plan, it satisfies the conservation
constraint~\eqref{realizationPlanConstraints2}. Thus, when $r_i(\sigma_v) > 0$, summing over all available actions yields $\sum_{a \in A(v)} \beta_i(a \mid v) = \frac{\sum_{a \in A(v)} r_i(\sigma_v a)}{r_i(\sigma_v)} = \frac{r_i(\sigma_v)}{r_i(\sigma_v)} = 1$, confirming $\beta_i(\cdot \mid v)$ is a valid probability distribution. The proof that this constructed strategy profile induces $r_i$ then follows directly by induction on the length of the sequence paths.
\end{proof}

\[
\mathcal B_i
:=
\prod_{U_i^j\in\mathcal U_i}
\Delta\!\bigl(A(U_i^j)\bigr),
\]
\[
\mathcal R_i
:=
\left\{
r_i:S_i\longrightarrow\mathbb R_{+}:
\begin{array}{l}
r_i(\varnothing)=1,\\[0.15em]
r_i(\sigma_v)=
\displaystyle\sum_{a\in A(v)}r_i(\sigma_v a),
\quad v\in\mathcal U_i
\end{array}
\right\}.
\]

\begin{figure}[H]
\centering

{\large
\begin{tikzcd}[
    column sep=5.5em,
    row sep=3.8em
]
\Sigma_i
    \arrow[r, hook, "\iota_i"]
    \arrow[d, hook, "\delta_i"']
&
\mathcal B_i
    \arrow[d, two heads, "\rho_i"]
\\
\Delta(\Sigma_i)
    \arrow[r, two heads, "\Phi_i"']
&
\mathcal R_i
\end{tikzcd}
}

\vspace{0.8em}

\[
\begin{aligned}
\delta_i(s_i)
    &=\delta_{s_i},
&
\iota_i(s_i)
    &=\text{the deterministic behavioral strategy induced by }s_i,
\\[0.4em]
\rho_i(\beta_i)(\sigma)
    &=\prod_{c\in\sigma}\beta_i(c),
&
\Phi_i(\pi_i)
    &=\sum_{s_i\in\Sigma_i}
      \pi_i(s_i)\,
      \rho_i\!\bigl(\iota_i(s_i)\bigr).
\end{aligned}
\]

\[
\Phi_i\circ\delta_i
=
\rho_i\circ\iota_i,
\qquad
\mathcal R_i
=
\rho_i(\mathcal B_i)
=
\operatorname{conv}
\left\{
\rho_i\!\bigl(\iota_i(s_i)\bigr):
s_i\in\Sigma_i
\right\}.
\]

\caption{Relations among pure, mixed, and behavioral strategies and
realization plans for player \(i\). The maps \(\rho_i\) and \(\Phi_i\)
are generally many-to-one. Under perfect recall, mixed and behavioral
strategies have the same image in the realization-plan polytope
\(\mathcal R_i\).}
\label{fig:strategy-realization-diagram}
\end{figure}

\[
\begin{aligned}
\iota_i &: \text{pure strategy}
    &&\longmapsto \text{deterministic behavioral strategy},\\
\delta_i &: \text{pure strategy}
    &&\longmapsto \text{Dirac mixed strategy},\\
\rho_i &: \text{behavioral strategy}
    &&\longmapsto \text{induced realization plan},\\
\Phi_i &: \text{mixed strategy}
    &&\longmapsto \text{convex combination of pure realization plans}.
\end{aligned}
\]

\subsection{Sequence-Form Linear Programs}
\label{app:sequence-form-linear-programs}

Given realization plans $r_1$ and $r_2$, we will write them as column vectors $x$ and $y$ for the batter and pitcher respectively. Note $x$ has $|S_B|$ components and $y$ has $|S_P|$ components.

We define matrices $E$ and $F$ to capture that vectors $x$ and $y$ constitute valid realization plans as defined in Definition \ref{def:realization-plan} via the system:
\[
Ex = e \qquad \qquad Fy = f
\]
where $e \in \mathbb{R}^{1 + |\mathcal{U}_B|}$ and $f \in \mathbb{R}^{1 + |\mathcal{U}_P|}$ are conservation vectors whose first entries (corresponding to the empty sequence $\varnothing$) equal $1$, and all subsequent entries are $0$.

The payoff matrix $A$ for the batter is constructed with entries indexed by sequences:
\[
A_{\sigma_P, \sigma_B} = \sum_{\sigma_N \in S_N} g(\sigma_N, \sigma_B, \sigma_P) \cdot r_N(\sigma_N).
\]
Assuming a zero-sum relationship, the Pitcher's matrix satisfies $B = -A$. The expected payoffs can then be expressed globally using matrix bilinear forms: $x^T A y$ for the Batter and $x^T B y$ for the Pitcher.

\subsection{Best Response Formulations}
To compute a Nash equilibrium for this sequence-form game, we first characterize how an individual player optimally counters a fixed strategy of their opponent. Given a fixed realization plan $x$ for the batter, the pitcher solves a linear program to maximize expected payoff. Conversely, taking the algebraic dual of this program yields the security value bounds enforced by the network structure:

\noindent
\begin{minipage}[t]{0.48\textwidth}
\centering
\textbf{Pitcher Primal Optimization:}
\[
\begin{aligned}
\max_{y} \quad & (x^T B) y \\
\text{s.t.} \quad & F y = f, \\
& y \ge 0.
\end{aligned}
\]
\end{minipage}%
\hfill
\begin{minipage}[t]{0.48\textwidth}
\centering
\textbf{Pitcher Dual Optimization:}
\[
\begin{aligned}
\min_{q} \quad & f^T q \\
\text{s.t.} \quad & F^T q \ge B^T x, \\
& q \ \text{free}.
\end{aligned}
\]
\end{minipage}

\vspace{0.5cm}
\noindent
Symmetrically, if the pitcher's realization plan $y$ is fixed, the batter maximizes expected utility according to a corresponding primal-dual pair:

\noindent
\begin{minipage}[t]{0.48\textwidth}
\centering
\textbf{Batter Primal Optimization:}
\[
\begin{aligned}
\max_{x} \quad & x^T (A y) \\
\text{s.t.} \quad & E x = e, \\
& x \ge 0.
\end{aligned}
\]
\end{minipage}%
\hfill
\begin{minipage}[t]{0.48\textwidth}
\centering
\textbf{Batter Dual Optimization:}
\[
\begin{aligned}
\min_{p} \quad & e^T p \\
\text{s.t.} \quad & E^T p \ge A y, \\
& p \ \text{free}.
\end{aligned}
\]
\end{minipage}
\vspace{0.5cm}

\subsection{Joint Linear Program for Zero-Sum Games}
By leveraging linear programming duality, we can combine the optimizations of both players into a single, unified problem. Substituting the constraints from the batter's dual optimization problem into the pitcher's strategy space yields a single primal-dual system that solves for the game's saddle-point equilibrium simultaneously:

\begin{equation}
\label{def:joint-lps}
\begin{array}{c@{\qquad\qquad}c}
\text{\bfseries Joint Primal LP}
&
\text{\bfseries Joint Dual LP}
\\[1.5ex]
\begin{aligned}
\min_{y,p}\quad & e^T p \\
\text{s.t.}\quad & E^T p-Ay\geq 0, \\
& Fy=f, \\
& y\geq 0
\end{aligned}
&
\begin{aligned}
\max_{x,q}\quad & f^T q \\
\text{s.t.}\quad & F^T q-B^T x\leq 0, \\
& Ex=e, \\
& x\geq 0
\end{aligned}
\end{array}
\end{equation}

\begin{theorem}
The Nash equilibrium strategies and safety values of the zero-sum extensive-form game are given explicitly by the solution vector to the joint primal and dual programs in \eqref{def:joint-lps}.
\end{theorem}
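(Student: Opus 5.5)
The plan is to reduce the statement to linear-programming duality applied to the bilinear saddle-point problem $\max_{x}\min_{y} x^\top A y$ over the realization-plan polytopes $\{x\ge 0: Ex=e\}$ and $\{y\ge 0: Fy=f\}$. We then transfer the conclusion back to behavioral strategies through Proposition~\ref{prop:realization-plan-induced} and the perfect-recall equivalence recorded in Figure~\ref{fig:strategy-realization-diagram}.

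\textbf{Step 1 (payoff equivalence).} First I would check that for realization plans $x,y$ induced by behavioral strategies $\beta_B,\beta_P$, the expected payoff \eqref{expPayoffForGivenRealizationPlan} equals $x^\top A y$. Expanding $A_{\sigma_P,\sigma_B}=\sum_{\sigma_N} g\, r_N(\sigma_N)$ and grouping by leaves gives $\sum_{z} x(\sigma_B(z))\,y(\sigma_P(z))\,r_N(\sigma_N(z))\,u(z)$. This is the reach probability of $z$ times its utility, by the product form of realization plans and the conditional-independence assumption \eqref{conditionalIndependenceAssumption}. Since the Pitcher and Batter have perfect recall, every mixed strategy has the same realization plan as some behavioral strategy (Kuhn), and by Proposition~\ref{prop:realization-plan-induced} every feasible $x$ or $y$ comes from a behavioral strategy. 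Hence the minimax problem of the game equals $\max_{x}\min_{y} x^\top A y$ over the two polytopes.

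\textbf{Step 2 (inner problem by duality).} For fixed $y$, the Batter's best-response value $\max\{x^\top Ay: Ex=e,\,x\ge 0\}$ equals $\min\{e^\top p: E^\top p\ge Ay\}$ by strong duality. The primal is feasible (any pure plan) and bounded (a compact polytope). Hence
\[
\min_{y}\max_{x} x^\top Ay=\min_{y,p}\{e^\top p: E^\top p-Ay\ge 0,\ Fy=f,\ y\ge 0\},
\]
which is the Joint Primal LP. Symmetrically, for fixed $x$ the Pitcher's minimization $\min_y x^\top A y$ is dualized using the Pitcher best-response pair of Appendix~\ref{app:sequence-form-linear-programs}. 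Here $B=-A$ enters, and a sign change $q\mapsto -q$ may be needed to match the displayed form. This gives $\max_x\min_y x^\top Ay$ as the Joint Dual LP.

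\textbf{Step 3 (the two joint programs are mutual LP duals).} I would write down the Lagrangian dual of the Joint Primal directly. Attach multipliers $x\ge 0$ to $E^\top p-Ay\ge 0$ and free multipliers $q$ to $Fy=f$. Stationarity in the free $p$ gives $Ex=e$, and stationarity in $y\ge 0$ gives the $F^\top q$-versus-$A^\top x$ (equivalently $B^\top x$) inequality. Both programs are feasible, so strong duality yields equal optimal values, and this is the minimax theorem in sequence form. An optimal $(y^\star,p^\star)$ and $(x^\star,q^\star)$ then form a saddle point: $x^\star$ guarantees the Batter at least the common value against every $y$, and $y^\star$ holds the Batter to at most it. Converting them to behavioral strategies via Proposition~\ref{prop:realization-plan-induced} gives the equilibrium, and $e^\top p^\star=f^\top q^\star$ (up to sign convention) is the safety value. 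Complementary slackness additionally identifies $p^\star,q^\star$ with the information-set values of Theorems~\ref{optimalPitcherAtu} and~\ref{thm:batter-dp=dual}.

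\textbf{Main obstacle.} I expect the main difficulty to be the bookkeeping of signs and orientation. The paper indexes $A$ as $A_{\sigma_P,\sigma_B}$ but writes $x^\top Ay$, and it states the joint dual with $B^\top x$ and $\max f^\top q$. I would fix the convention that $A$ is $|S_B|\times|S_P|$ and verify the dual step by step, absorbing any sign discrepancy into the free variable $q$. The substantive content beyond that is only Step 1's reliance on perfect recall. Without it, realization plans would not capture all mixed strategies, and the LP value could differ from the game value.
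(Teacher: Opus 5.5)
Your route is the same as the paper's. The paper's one-line proof invokes the Koller--Megiddo--von Stengel argument by citation: perfect recall lets realization plans replace mixed strategies, the inner best response is dualized, and strong duality between the two joint programs is the minimax theorem. For the \emph{correct} dual pair, your Steps 1--3 are sound.

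The step that fails is the reconciliation with the displayed Joint Dual, where you propose to absorb any sign discrepancy into the free variable $q$. Your Lagrangian computation in Step 3 gives $F^\top q - A^\top x \le 0$, i.e.\ $F^\top q + B^\top x \le 0$. The paper instead displays $F^\top q - B^\top x \le 0$, i.e.\ $F^\top q \le -A^\top x$. The mismatch is in the sign of the $x$-term relative to the objective $f^\top q$. The substitution $q \mapsto -q$ flips $f^\top q$ and $F^\top q$ together and leaves $B^\top x$ alone, so it only turns the displayed program into $\max\{-f^\top q : F^\top q \ge A^\top x,\ Ex = e,\ x \ge 0\}$. Dualizing in $q$ for fixed $x$, this program has value $\max_x\bigl(-\max_y x^\top A y\bigr) = -\min_x \max_y x^\top A y$, not $\max_x \min_y x^\top A y$. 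So it is not the LP dual of the Joint Primal, and its $x$-component minimizes the Batter's best-case payoff. For example, take a one-shot game where Batter action $a_1$ pays $1$ against every pitch and $a_2$ pays $0$. The Joint Primal has value $1$, while the displayed Joint Dual has value $0$, attained only at the dominated plan $x = a_2$.

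So the statement, read literally, is false, and no sign bookkeeping will prove it. What your argument does prove is the corrected theorem, in which the Joint Dual is $\max f^\top q$ subject to $F^\top q + B^\top x \le 0$ (equivalently $F^\top q \le A^\top x$), $Ex = e$, $x \ge 0$. For that pair the pieces line up:
\begin{itemize}
\item Step 2 uses $\min_y x^\top A y = -\min\{f^\top q : F^\top q \ge B^\top x\}$ followed by $q \mapsto -q$, and this agrees with your Step 3 Lagrangian.
\item Strong duality gives $e^\top p^\star = f^\top q^\star$ with no residual sign.
\item The saddle-point conclusion follows exactly as you describe.
\end{itemize}
State this correction explicitly rather than claiming to recover the displayed form. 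Also replace your parenthetical ``equivalently $B^\top x$'' by $F^\top q + B^\top x \le 0$. The same sign shows up in your final remark: it is $-q^\star$, not $q^\star$, that solves the Pitcher's best-response dual of Theorem~\ref{optimalPitcherAtu}.
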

\begin{proof}
The result follows directly from the classical Von Neumann minimax theorem applied to extensive-form games with perfect recall, mapped via strong linear programming duality. For a complete treatment of the structural equivalence between extensive-form saddle points and sequence-form linear program optimizations, see the foundational proofs by Koller et al.~\cite{koller1994fast} and von Stengel~\cite{vonstengel_1996}.
\end{proof}

\end{document}